\author{Liran Shaul}
\address{Department of Algebra, Faculty of Mathematics and Physics, Charles University in Prague, Sokolovsk\'a 83, 186 75 Praha, Czech Republic}
\email{shaul@karlin.mff.cuni.cz}
\newtheorem{thm}[equation]{Theorem}
\newtheorem*{thm*}{Theorem}
\newtheorem*{cor*}{Corollary}
\newtheorem*{dfn*}{Definition}
\newtheorem{cthm}{Theorem}
\newtheorem{cor}[equation]{Corollary}
\newtheorem{prop}[equation]{Proposition}
\newtheorem{lem}[equation]{Lemma}
\theoremstyle{definition}
\newtheorem{dfn}[equation]{Definition}
\newtheorem{rem}[equation]{Remark}
\newtheorem{exa}[equation]{Example}
\newcommand{\opn}{\operatorname}
\newcommand{\cat}[1]{\operatorname{\mathsf{#1}}}
\newcommand{\mfrak}[1]{\mathfrak{#1}}
\newcommand{\mrm}[1]{\mathrm{#1}}
\newcommand{\mbb}[1]{\mathbb{#1}}
\renewcommand{\k}{\Bbbk}
\newcommand{\K}{\mbb{K} \hspace{0.05em}}
\newcommand{\m}{\mfrak{m}}
\newcommand{\n}{\mfrak{n}}
\newcommand{\p}{\mfrak{p}}
\newcommand{\q}{\mfrak{q}}
\newcommand{\amp}{\operatorname{amp}}
\def\skewtimes{\ltimes\!}
\begin{document}

\title{Sequence-regular commutative DG-rings}

\begin{abstract}
We introduce a new class of commutative noetherian DG-rings which generalizes the class of regular local rings.
These are defined to be local DG-rings $(A,\bar{\m})$ such that the maximal ideal $\bar{\m} \subseteq \mrm{H}^0(A)$ can be generated by an $A$-regular sequence.
We call these DG-rings sequence-regular DG-rings, and make a detailed study of them.
Using methods of Cohen-Macaulay differential graded algebra, 
we prove that the Auslander-Buchsbaum-Serre theorem about localization generalizes to this setting.
This allows us to define global sequence-regular DG-rings, 
and to introduce this regularity condition to derived algebraic geometry.
It is shown that these DG-rings share many properties of classical regular local rings,
and in particular we are able to construct canonical residue DG-fields in this context.
Finally, we show that sequence-regular DG-rings are ubiquitous,
and in particular, any eventually coconnective derived algebraic variety over a perfect field is generically sequence-regular.
\end{abstract}

\thanks{{\em Mathematics Subject Classification} 2010:
16E45, 13D09, 13H05, 13H10}

\setcounter{tocdepth}{1}
\setcounter{section}{-1}

\maketitle
%\tableofcontents

\numberwithin{equation}{section}

\section{Introduction}

The aim of this paper is to introduce and study a class of commutative differential graded rings which generalizes the class of regular local rings, and the corresponding global notion of a regular ring.

The class of regular local rings is quite possibly the most important class of rings in commutative algebra.
This is because of the geometric interpretation of regularity:
if $X$ is a noetherian scheme and $p\in X$,
then $X$ is nonsingular at $p$ if and only if the local ring $\mathcal{O}_{X,p}$ is a regular local ring.
Thus, regularity is the standard situation in algebraic geometry,
while all other local rings pertain only to special geometric situations.

There are various equivalent definitions for the notion of a regular local ring.
Since in this paper we are dealing with differential graded rings,
which are homological in nature, 
the most tempting definition one would consider using is that a noetherian local ring is regular precisely when it has finite global dimension.

Unfortunately, under an eventual coconnectedness assumption 
(which means that the cohomology or the homotopy of objects is bounded), 
as was shown by J{\o}rgensen in \cite[Theorem 0.2]{Jo}, by Yekutieli in \cite[Theorem 0.7]{YeDual}
and by Lurie in \cite[Lemma 11.3.3.3]{LurSpec} in different derived contexts,
such derived rings never have finite global dimension,
unless they are equivalent to an ordinary regular local ring.
 
There is another homological approach to defining regular local rings by using the notion of a regular sequence.
A noetherian local ring $(A,\m)$ is regular if and only if the maximal ideal $\m$ can be generated by an $A$-regular sequence.
In that case, such a sequence must have length $d = \dim(A)$,
and every sequence of elements of $A$ of length $d$ which generates $\m$ is a regular sequence.
This is the approach we will take in this paper. 

We will study non-positively graded (in cohomological grading) commutative noetherian DG-rings.
All facts and definitions used in this paper about commutative DG-rings will be recalled in Section \ref{sec:prem} below.
For a commutative noetherian DG-ring $A$, 
associated to it is an ordinary commutative noetherian ring $\mrm{H}^0(A)$.
When this ring is local with a maximal ideal $\bar{\m}$,
we say that $(A,\bar{\m})$ is a noetherian local DG-ring.
For noetherian local DG-rings $(A,\bar{\m})$ with bounded cohomology, 
the notion of an $A$-regular sequence in $\bar{\m}$ was introduced and studied,
first by Minamoto in \cite{Mi2},
and later by the author in \cite{ShCM, ShKos}.
It is recalled in Section \ref{sec:reg-seq} below.
Once one has the notion of an $A$-regular sequence,
one can make the following definition:
\begin{dfn*}
Let $(A,\bar{\m})$ be a commutative noetherian local DG-ring with bounded cohomology.
We say that $A$ is a sequence-regular DG-ring if the maximal ideal $\bar{\m}$ can be generated by an $A$-regular sequence.
\end{dfn*}

With this definition in hand, 
we wish to study this notion.
A natural question that immediately arises is whether this notion is stable under localization.
In the classical context of ordinary commutative noetherian rings,
this was known in the 1950s as the localization problem,
and at the time it was a major open problem in commutative algebra.
This was the case until Serre in \cite{Serre}, 
and independently Auslander and Buchsbaum in \cite{AusBuch}
showed that the usual definition of a regular local ring is equivalent to it being of finite global dimension.
With this introduction of homological methods to commutative algebra,
the localization problem was solved.

There is a notion of localization of commutative noetherian DG-rings $A$,
with respect to prime ideals $\bar{\p} \in \opn{Spec}(\mrm{H}^0(A))$.
It is denoted by $A_{\bar{\p}}$ and recalled in Section \ref{sec:loc} below.
In Corollary \ref{cor:Serre} we prove:

\begin{cthm}\label{cthm:local}
Let $(A,\bar{\m})$ be a sequence-regular noetherian local DG-ring.
Then for any prime $\bar{\p} \in \opn{Spec}(\mrm{H}^0(A))$,
the localization $A_{\bar{\p}}$ is also a sequence-regular DG-ring.
\end{cthm}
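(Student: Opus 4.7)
The plan is to mirror the classical Auslander--Buchsbaum--Serre localization argument, with Cohen--Macaulay DG-algebra (in the sense of \cite{ShCM}) substituting for the finite global dimension characterization of regular local rings, which as the introduction notes is unavailable in the bounded DG setting.

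First I would show that if $(A,\bar{\m})$ is sequence-regular with an $A$-regular sequence $a_1,\ldots,a_d \in A^0$ whose images generate $\bar{\m}$, then $\mrm{H}^0(A)$ is itself a classical regular local ring of Krull dimension $d$. The upper bound $\dim \mrm{H}^0(A) \le d$ is immediate from Krull's height theorem, since $\bar{\m}$ is generated by $d$ elements. The lower bound $\dim \mrm{H}^0(A) \ge d$ should follow from the observation that sequence-regularity forces the Cohen--Macaulay property with $\depth(A) = d$ (the length of the regular sequence), coupled with the depth--dimension equality $\depth(A) = \dim \mrm{H}^0(A)$ for Cohen--Macaulay DG-rings recorded in \cite{ShCM}. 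Combining, $\bar{\m}$ admits a minimal generating set whose length equals $\dim \mrm{H}^0(A)$, which is the classical definition of a regular local ring.

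Second, given any $\bar{\p} \in \opn{Spec}(\mrm{H}^0(A))$, the first step gives that $\mrm{H}^0(A_{\bar{\p}}) \cong \mrm{H}^0(A)_{\bar{\p}}$ is a classical regular local ring of dimension $e := \opn{ht}(\bar{\p})$, whose maximal ideal is therefore generated by a regular system of parameters $\bar{b}_1,\ldots,\bar{b}_e$. Choose lifts $b_1,\ldots,b_e \in A_{\bar{\p}}^0$. I would then establish that $(b_1,\ldots,b_e)$ is an $A_{\bar{\p}}$-regular sequence. Since the Cohen--Macaulay property is preserved by localization of DG-rings (again by \cite{ShCM}), $A_{\bar{\p}}$ is itself Cohen--Macaulay with $\depth(A_{\bar{\p}}) = \dim \mrm{H}^0(A_{\bar{\p}}) = e$. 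In the Cohen--Macaulay DG setting, a sequence of elements of $A_{\bar{\p}}^0$ whose images in $\mrm{H}^0$ form a system of parameters is automatically $A_{\bar{\p}}$-regular; applied here, this promotes $(b_1,\ldots,b_e)$ to the desired $A_{\bar{\p}}$-regular generating sequence of $\bar{\p}_{\bar{\p}}$.

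The main obstacle I anticipate is the first step---extracting classical regularity of $\mrm{H}^0(A)$ from the mere existence of a DG-regular sequence generating $\bar{\m}$. A DG $A$-regular sequence in $A^0$ does not in general descend to an $\mrm{H}^0(A)$-regular sequence, so the argument cannot proceed by directly analyzing the images $\bar{a}_i$; one must instead route through the numerical invariants $\depth$ and $\dim$, and the proof will hinge on the depth--dimension identity for sequence-regular (hence Cohen--Macaulay) DG-rings being in place from earlier sections of the paper.
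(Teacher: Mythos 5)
Your overall architecture matches the paper's: first extract from sequence-regularity that $\mrm{H}^0(A)$ is a regular local ring (via the squeeze $n \le \opn{seq.depth}_A(A) \le \dim(\mrm{H}^0(A)) \le n$, which is essentially what you describe), then use classical Auslander--Buchsbaum--Serre on $\mrm{H}^0(A)$, transfer the Cohen--Macaulay property to $A_{\bar{\p}}$, and finally promote a regular system of parameters of $\mrm{H}^0(A)_{\bar{\p}}$ to an $A_{\bar{\p}}$-regular sequence via a ``double Cohen--Macaulay'' principle. This is exactly the skeleton of Theorem \ref{thm:main} combined with Corollary \ref{cor:Serre}.

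However, there is a genuine gap at the pivot of your second step: you assert that ``the Cohen--Macaulay property is preserved by localization of DG-rings (again by \cite{ShCM})'' as if it were a citable black box. It is not. What sequence-regularity directly gives you is that $(A,\bar{\m})$ is \emph{local}-Cohen--Macaulay, i.e., $\opn{seq.depth}_A(A) = \dim(\mrm{H}^0(A))$ at the maximal ideal, and \cite[Example 8.3]{ShCM} exhibits a local-Cohen--Macaulay DG-ring with a localization that is \emph{not} local-Cohen--Macaulay. The paper's notion of ``Cohen--Macaulay DG-ring'' is by definition the statement that all localizations are local-Cohen--Macaulay, so invoking its stability under localization is circular until you have shown that a sequence-regular DG-ring belongs to that class. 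Closing this gap is the main technical content of the paper: one first shows that $A$ has constant amplitude (Proposition \ref{prop:const}, using that the regular local ring $\mrm{H}^0(A)$ has a unique minimal prime), notes that $\mrm{H}^0(A)$ is catenary, and then proves Theorem \ref{thm:lCMisCM} --- a nontrivial induction on $\opn{seq.depth}(\bar{\p},A)$ using associated primes, prime avoidance, and Koszul complexes --- to conclude that local-Cohen--Macaulay plus constant amplitude plus catenarity implies Cohen--Macaulay. Your final step (a system of parameters is automatically $A_{\bar{\p}}$-regular) also quietly depends on this, since Proposition \ref{prop:sopIsRegular} and the double-Cohen--Macaulay theorem require the global Cohen--Macaulay property together with constant amplitude. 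Without an argument replacing Theorem \ref{thm:lCMisCM}, the proposal does not go through.
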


In order to prove this, we are unable to proceed as in the classical case.
The reason for that is, as mentioned above,
that the characterization of regular local rings as the rings of finite global dimension is simply false in the DG-setting.
Any non-trivial DG-ring with bounded cohomology has infinite global dimension.
Instead, we are able to provide another characterization of sequence-regular DG-rings,
one which is also easily seen to be stable under localization.

The key ingredient to obtain such a characterization is to use Cohen–Macaulay DG methods.
The notion of a Cohen–Macaulay DG-ring was first described in \cite{ShCM},
and was later extensively developed in \cite{ShKos}.
In this paper we will continue the development of this notion,
and use the theory of Cohen–Macaulay DG-rings to prove the following characterization of sequence-regualr DG-rings:
\begin{cthm}\label{cthm:charac}
Let $(A,\bar{\m})$ be a commutative noetherian local DG-ring with bounded cohomology.
Then the following are equivalent:
\begin{enumerate}
\item The DG-ring $(A,\bar{\m})$ is sequence-regular.
\item The DG-ring $(A,\bar{\m})$ is a Cohen–Macaulay DG-ring, and moreover, the ring $\mrm{H}^0(A)$ is a regular local ring.
\end{enumerate}
\end{cthm}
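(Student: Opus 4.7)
The plan is to prove the two implications separately, with the Cohen--Macaulay DG-ring theory of \cite{ShCM, ShKos} doing the main work.

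For $(1) \Rightarrow (2)$, start with an $A$-regular sequence $\bar{a}_1, \ldots, \bar{a}_n$ generating $\bar{\m}$. By the definition of an $A$-regular sequence recalled in Section \ref{sec:reg-seq}, each $\bar{a}_i$ is a non-zero-divisor on $\mrm{H}^0$ of the successive derived Koszul quotients $A /\!/ (\bar{a}_1, \ldots, \bar{a}_{i-1})$. Unwinding this inductively, the images of the sequence in $\mrm{H}^0(A)$ form a classical regular sequence generating the maximal ideal $\bar{\m} \subseteq \mrm{H}^0(A)$, and therefore $\mrm{H}^0(A)$ is a regular local ring of dimension $n$. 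For the Cohen--Macaulay property, the existence of an $A$-regular sequence of length $n$ supplies a lower bound for the local cohomology depth $\depth_{\bar{\m}}(A)$, while the general DG inequality bounding $\depth_{\bar{\m}}(A)$ from above in terms of $\dim(\mrm{H}^0(A))$ and the amplitude of $A$ gives a matching upper bound. Equality then produces the CM property.

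For $(2) \Rightarrow (1)$, choose a regular system of parameters $\bar{a}_1, \ldots, \bar{a}_n$ of $\mrm{H}^0(A)$ and lift to $0$-cocycles of $A$. These generate $\bar{\m}$, so the claim is that they form an $A$-regular sequence. The argument rests on a CM-preservation principle: in a Cohen--Macaulay local DG-ring, any $\bar{a} \in \bar{\m}$ that is a non-zero-divisor on $\mrm{H}^0(A)$ is $A$-regular, and the derived Koszul quotient $A /\!/ \bar{a}$ is again a Cohen--Macaulay local DG-ring whose zeroth cohomology $\mrm{H}^0(A)/(\bar{a})$ is still a regular local ring, now of dimension $n-1$. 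Granting this principle, induction on $n$ completes the proof.

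The main obstacle is the CM-preservation principle just stated. It is the DG analog of the classical fact that every system of parameters in a Cohen--Macaulay local ring is a regular sequence, but in the derived setting one must carefully track how depth, amplitude, and the CM condition behave under derived Koszul quotients, rather than rely on the classical depth-dimension formulas. I expect this step to require the full strength of the Cohen--Macaulay DG-ring theory developed in \cite{ShCM, ShKos}, together with a careful comparison of $A$-regularity with $\mrm{H}^0(A)$-regularity mediated by the CM hypothesis.
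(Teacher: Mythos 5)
Your proposal has the right overall architecture for $(2)\Rightarrow(1)$ but contains two genuine problems in $(1)\Rightarrow(2)$. First, you assert that $A$-regularity of $\bar{a}_i$ means it is a non-zero-divisor on $\mrm{H}^0$ of the successive Koszul quotients; the definition recalled in Section \ref{sec:reg-seq} is that a regular element acts injectively on the \emph{bottom} cohomology $\mrm{H}^{\inf}$, not on $\mrm{H}^0$. Consequently your claim that the images form a classical $\mrm{H}^0(A)$-regular sequence, and hence that $\mrm{H}^0(A)$ is regular, does not follow from the definition. The paper avoids this entirely with a counting argument: if $n$ is the minimal number of generators of $\bar{\m}$, then $n \le \opn{seq.depth}_A(A) \le \dim(\mrm{H}^0(A)) \le n$ (the middle inequality from \cite[Corollary 5.5]{ShCM}, the last from Krull's Hauptidealsatz), forcing $\dim(\mrm{H}^0(A)) = n$ and regularity. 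Second, and more seriously, your depth-versus-dimension equality only establishes that $A$ is \emph{local}-Cohen--Macaulay at $\bar{\m}$. Statement (2) asserts that $A$ is a Cohen--Macaulay DG-ring, which by the paper's definition means \emph{every} localization $A_{\bar{\p}}$ is local-Cohen--Macaulay, and this is not automatic: the paper explicitly recalls (\cite[Example 8.3]{ShCM}) that local-Cohen--Macaulayness is not stable under localization. Bridging this gap is the main technical content of the paper's argument: one first shows $A$ has constant amplitude (Proposition \ref{prop:const}, using that the regular ring $\mrm{H}^0(A)$ is a domain) and then invokes Theorem \ref{thm:lCMisCM}, which upgrades local-CM to CM under constant amplitude and catenarity. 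Your proposal does not address localization at all.

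For $(2)\Rightarrow(1)$, your ``CM-preservation principle'' is essentially the paper's double-Cohen--Macaulay theorem (Theorem \ref{thm:DoubleCM}): under the CM hypothesis on both $A$ and $\mrm{H}^0(A)$, a sequence is $A$-regular if and only if it is $\mrm{H}^0(A)$-regular. You correctly identify this as the crux and correctly predict it needs the machinery of \cite{ShCM,ShKos}, but you leave it unproved; the paper derives it from an amplitude formula for Koszul complexes (Proposition \ref{prop:kosAmp}) and the resulting dimension-drop characterization of $A$-regular sequences (Proposition \ref{prop:sopIsRegular}). Note also that this theorem requires the constant-amplitude hypothesis, which your sketch never mentions but which is again supplied by Proposition \ref{prop:const}. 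As written, the proposal is not a complete proof.
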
  
This result is repeated as Theorem \ref{thm:main} in the body of the paper.
See Remark \ref{rem:both-needed} for a discussion about the necessity of all conditions in this result.
Since Cohen–Macaulay DG-rings and regular local rings are stable under localization,
Theorem \ref{cthm:local} follows now immediately from this result.
We view this result as the \textbf{most important contribution of this paper},
because the definition of sequence regularity is a natural generalization,
but without Theorem \ref{cthm:charac} it seems very difficult in practice to show that a given DG-ring is sequence-regular.

Once one knows Theorem \ref{cthm:local} holds,
we may define global sequence-regular DG-rings as those commutative noetherian DG-rings such that all of their localizations are sequence-regular. 
Then, a characterization of these global sequence-regular DG-rings,
similar to Theorem \ref{cthm:charac} easily follows.

Let us now describe the contents of this paper.
As we already mentioned, Section \ref{sec:prem} contains the various preliminaries used in this paper.
In particular, we recall in it the notion of a Cohen–Macaulay DG-ring,
but also the more primitive notion of a local-Cohen–Macaulay DG-ring.
A Cohen–Macaulay DG-ring is defined to be a DG-ring such that all of its localizations are local-Cohen–Macaulay,
and in general the notion of a local-Cohen–Macaulay DG-ring is not stable under localization.

Section \ref{sec:lcm} is another chapter in the theory of local-Cohen–Macaulay and Cohen–Macaulay DG-ring.
Its main result, Theorem \ref{thm:lCMisCM} gives a sufficient condition for 
local-Cohen–Macaulay DG-rings to be Cohen–Macaulay.
This result is crucial for the proof of Theorem \ref{cthm:charac}.

In Section \ref{sec:seq-reg}, 
after proving some further results about Cohen–Macaulay DG-rings and regular sequences,
we define sequence-regular local DG-rings and prove Theorem \ref{cthm:charac}.

After that, in Section \ref{sec:operations} we study the behavior of the sequence-regular property with respect to various natural operations.
In particular, we show that the sequence-regularity condition is stable under localization and derived completion,
and investigate its transfer along flat local maps.
We further define global sequence-regular DG-rings in this setting,
and also give in Theorem \ref{thm:seq-reg} a necessary and sufficient condition for the derived quotient of a sequence-regular DG-ring to be sequence-regular.

The topic of Section \ref{sec:resField} is a generalization of the notion of a residue field of a local ring to the DG setting.
If $(A,\m)$ is a noetherian local ring, 
its residue field $\kappa(A) = A/\m$ is a simple basic construction in commutative algebra which is very useful.
This construction is not available in the setting of commutative DG-rings,
but in this section we show that for a sequence-regular local DG-ring $(A,\bar{\m})$,
one may associate a canonical DG-algebra over $A$, denoted by $\kappa(A)$
which generalizes the classical construction of the residue field.
We call $\kappa(A)$ the residue DG-field associated to $A$,
and dedicate Section \ref{sec:resField} to the study of its properties,
showing that various properties of the classical residue field extend to the residue DG-field.
In particular, we prove in Theorem \ref{thm:charac-residue} that the existence of a residue DG-field which 
satisfies certain natural properties is actually equivalent to the DG-ring being sequence-regular.
Thus, those residue DG-fields are available \textbf{only} in the sequence-regular case.
Then, more generally, for a (not necessarily local) sequence-regular DG-ring $A$,
and for any $\bar{\p} \in \opn{Spec}(\mrm{H}^0(A))$,
we construct the canonical DG-algebra $\kappa(A,\bar{\p})$ which is an analogue of the classical $\kappa(\p) = A_{\p}/\p A_{\p}$;
the residue field of a ring $A$ at a prime ideal $\p$.

The topic of Section \ref{sec:generic} is the ubiquity of sequence-regular DG-rings.
In one of its main results, Corollary \ref{cor:geometric} we show:

\begin{cthm}
Let $\K$ be a perfect field, and let $X$ be an eventually coconnective noetherian DG-scheme over $\K$.
Suppose that the classical scheme underlying $X$ is an algebraic variety over $\K$.
Then $X$ is generically sequence-regular.
\end{cthm}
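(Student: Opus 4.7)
The plan is to reduce to an affine chart and apply the characterization of sequence-regularity in Theorem \ref{cthm:charac}. Since the assertion is local on $X$, we may assume $X = \opn{Spec}(A)$, where $A$ is a commutative noetherian DG-algebra over $\K$ with bounded cohomology such that $\mrm{H}^0(A)$ is a reduced finite-type $\K$-algebra (the affine chart of the variety). By Theorem \ref{cthm:charac}, $A_{\bar{\p}}$ is sequence-regular precisely when $A_{\bar{\p}}$ is Cohen--Macaulay and $\mrm{H}^0(A)_{\bar{\p}}$ is a regular local ring. Thus it suffices to show that both loci contain a common open dense subset of $\opn{Spec}(\mrm{H}^0(A))$.

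The classical factor is handled by generic regularity: since $\K$ is perfect and $\opn{Spec}(\mrm{H}^0(A))$ is a variety over $\K$, the regular locus $U \subseteq \opn{Spec}(\mrm{H}^0(A))$ is open and dense.

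For the Cohen--Macaulay factor, I first verify the property at generic points. Since $\mrm{H}^0(A)$ is reduced, for every minimal prime $\bar{\eta}$ the localization $\mrm{H}^0(A)_{\bar{\eta}}$ is a field, so the maximal ideal of the local DG-ring $A_{\bar{\eta}}$ is $\bar{\m}=0$; this is generated by the empty (trivially $A_{\bar{\eta}}$-regular) sequence, so $A_{\bar{\eta}}$ is sequence-regular and in particular Cohen--Macaulay by Theorem \ref{cthm:charac}. To upgrade this pointwise statement at minimal primes into density on an open set, I invoke openness of the Cohen--Macaulay locus in our eventually coconnective noetherian setting: the cohomological criteria for the local-Cohen--Macaulay condition developed in Section \ref{sec:lcm}, together with Theorem \ref{thm:lCMisCM} promoting local-Cohen--Macaulayness to Cohen--Macaulayness, express this condition through the finitely generated $\mrm{H}^0(A)$-modules $\mrm{H}^{-i}(A)$ and so cut out an open subset. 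Since this open subset contains every minimal prime of $\mrm{H}^0(A)$, it is open and dense.

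Intersecting $U$ with the Cohen--Macaulay locus yields an open dense subset of $X$ on which both conditions of Theorem \ref{cthm:charac} hold, so $X$ is sequence-regular there. The main obstacle is establishing openness of the Cohen--Macaulay locus in the DG-setting; classical generic regularity over a perfect field is standard, the triviality of sequence-regularity at generic points of a reduced ring is immediate, and it is the passage from pointwise Cohen--Macaulayness to openness that depends on the technical machinery of Section \ref{sec:lcm}.
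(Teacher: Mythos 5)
Your overall architecture matches the paper's: intersect the classical regular locus of $\mrm{H}^0(A)$ (dense open since $\K$ is perfect) with the Cohen--Macaulay locus of $A$, and invoke the characterization of Theorem \ref{cthm:charac}. The gap is in the step you yourself flag as the main obstacle: openness of the Cohen--Macaulay locus. You assert that ``the cohomological criteria for the local-Cohen--Macaulay condition developed in Section \ref{sec:lcm}, together with Theorem \ref{thm:lCMisCM}, express this condition through the finitely generated $\mrm{H}^0(A)$-modules $\mrm{H}^{-i}(A)$ and so cut out an open subset.'' Section \ref{sec:lcm} contains no such criterion. Theorem \ref{thm:lCMisCM} is a pointwise statement (a local-Cohen--Macaulay DG-ring with constant amplitude over a catenary $\mrm{H}^0$ is Cohen--Macaulay); the local-Cohen--Macaulay condition itself is defined by the equality $\opn{seq.depth}_{A_{\bar{\p}}}(A_{\bar{\p}}) = \dim(\mrm{H}^0(A_{\bar{\p}}))$, and it is not at all formal that this is an open condition in $\bar{\p}$ --- indeed the paper explicitly warns that local-Cohen--Macaulayness is not even stable under localization in general. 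So the claimed openness does not follow from anything in Section \ref{sec:lcm}, and your argument at the minimal primes, while correct, only gives the property at finitely many points.

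What actually makes the Cohen--Macaulay locus open and dense in the paper is the existence of a dualizing DG-module $R$ over $A$, which converts local-Cohen--Macaulayness into the amplitude equality $\amp(A_{\bar{\p}}) = \amp(R_{\bar{\p}})$ and thereby into a condition on supports of finitely generated cohomology modules; this is the content of (the proof of) \cite[Theorem 12]{ShOpLoc}, cited in Theorem \ref{thm:generic}, and it also uses irreducibility of $\opn{Spec}(\mrm{H}^0(A))$. The dualizing DG-module is where the finite-type hypothesis over the field enters: it is supplied by \cite[Theorem 7.9]{YeDual}. Your proposal never produces a dualizing DG-module and never uses the finite-type hypothesis for anything beyond classical generic regularity, so the passage from ``Cohen--Macaulay at the generic point'' to ``Cohen--Macaulay on a dense open set'' is unsupported as written. (A minor additional point: the paper's Corollary \ref{cor:generic} assumes $\mrm{H}^0(A)$ is a domain, not merely reduced, and irreducibility is used in the cited openness result, so your reduction to a reduced affine chart should be sharpened to an integral one, or the argument should be run component by component.)
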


This result means that whenever one studies the derived algebraic geometry in a bounded setting,
if the underlying scheme is a classical algebraic variety over a perfect field,
then the DG-scheme one works with is sequence-regular on a dense open set.
This generalizes the classical fact that an algebraic variety over a perfect field is nonsingular on a dense open set.

In the final Section \ref{sec:special} we return to the classical sequence of containments of special rings
\[
\mbox{Regular rings} \subsetneq \mbox{Lci rings} \subsetneq \mbox{Gorenstein rings} \subsetneq \mbox{Cohen–Macaulay rings} 
\]
and discuss how this picture generalizes to the DG-setting.
Here, regular rings are generalized by sequence-regular DG-rings,
local complete intersection (lci) rings are generalized by quasi-smooth DG-rings,
and Gorenstein and Cohen–Macaulay rings are generalized by Gorenstein and Cohen–Macaulay DG-rings.
In this section we discuss to what extent these containments carry (and fail to carry) over to the DG-setting.

We finish the introduction by mentioning that an idea similar to that of sequence-regularity has appeared
before in the papers \cite{BGS,Greenlees,GHS} where the notion of an s-regular ring spectra was studied.
However, it seems that because of the fact that commutative rings are much more similar to commutative DG-rings then to ring spectra,
the impression of the author is that the resulting theory is not as similar to the theory of regular local rings as the theory developed in this paper.

\section{Preliminaries}\label{sec:prem}

In this section we gather some preliminaries that will be used throughout this paper.

\subsection{Commutative DG-rings}
The main objects of study in this paper are commutative differential graded (always abbreviated as DG) rings.
These are defined to be $\mathbb{Z}$-graded algebras $A = \bigoplus_{n=-\infty}^\infty A^n$,
equipped with a differential $d:A\to A^{n+1}$ which satisfy a Leibnitz rule 
$d(a\cdot b) = d(a)\cdot b + (-1)^{\deg(a)}\cdot a \cdot d(b)$. 
We will further assume that all DG-rings in this paper are non-positively graded, which means that $A^n = 0 $ for all $n>0$.
Such DG-rings will be called non-positive DG-rings.

The homotopy category of DG-rings is obtained from the category of DG-rings by formally inverting all quasi-isomorphisms.
It can be realized as the homotopy category of a Quillen model category,
since the category of non-positive DG-rings has such a model structure in which the weak equivalences are exactly the quasi-isomorphisms.

A DG-ring $A$ will be called commutative if $b\cdot a = (-1)^{\deg(a)\cdot \deg(b)}\cdot a \cdot b$,
and $a^2 = 0$ if $a$ is an homogenous element of odd degree.

In addition to being non-positive, we will further assume that all DG-rings in this paper are commutative and unital.
In particular, all rings appearing in this paper (which are just DG-rings concentrated in degree $0$) will be commutative and unital.
For a complete account about DG-rings and their derived categories, we refer the reader to \cite{YeBook}.

The derived category of a DG-ring $A$ is the category obtained from inverting quasi-isomorphisms in the category of DG-modules over $A$.
This is a triangulated category which we will denote by $\cat{D}(A)$.
The full triangulated subcategory which consists of DG-modules whose cohomology is bounded below (respectively above)
will be denoted by $\cat{D}^{+}(A)$ (resp. $\cat{D}^{-}(A)$).
The full triangulated subcategory which consists of DG-modules with bounded cohomology will be denoted by $\cat{D}^{\mrm{b}}(A)$.
The infimum, the supremum and amplitude of a DG-module $M$ are defined as
\[
\inf(M)=\inf\{n \in \mathbb{Z} \mid \mrm{H}^n(M) \ne 0\}
\quad
\sup(M)=\sup\{n \in \mathbb{Z} \mid \mrm{H}^n(M) \ne 0\}
\]
and $\amp(M) := \sup(M) - \inf(M)$. 
In particular, we see that $M \in \cat{D}^{\mrm{b}}(A)$ if and only if $\amp(M) < \infty$.

Given a commutative non-positive DG-ring $A$,
the cohomology $\mrm{H}^0(A)$ is a commutative ring,
and there is a natural map $\pi_A:A \to \mrm{H}^0(A)$.
We will sometimes restrict to its degree $0$ part which is simply the surjection of commutative rings $\pi^0_A:A^0 \to \mrm{H}^0(A)$.

Since we will frequently work with both $A$ and $\mrm{H}^0(A)$ at the same time,
we will follow the convention of \cite{YeDual}, and write $\bar{?}$ to denote elements and ideals of $\mrm{H}^0(A)$.
Thus for example, a typical element of $\mrm{H}^0(A)$ will be denoted by $\bar{x}$,
and similarly, a typical prime ideal of $\mrm{H}^0(A)$ will be denoted by $\bar{\p}$.

For any $M \in \cat{D}(A)$ and any $n\in \mathbb{Z}$,
the cohomology $\mrm{H}^n(M)$ has the structure of a module over the ring $\mrm{H}^0(A)$.
We will say that a DG-ring $A$ is noetherian if $\mrm{H}^0(A)$ is a noetherian ring,
and for all $i<0$, the $\mrm{H}^0(A)$-module $\mrm{H}^i(A)$ is finitely generated.
Almost all DG-rings in this paper will be noetherian,
and we will often also assume that they have bounded cohomology.
The latter means that $\mrm{H}^i(A) = 0$ for all $i\ll 0$.
We will further say that a noetherian DG-ring $A$ has a noetherian model if there is some noetherian ring $B$ and a map of DG-rings $B\to A$ such that the induced map $B\to\mrm{H}^0(A)$ is surjective.

If $A$ is a commutative noetherian DG-ring,
and if $M \in \cat{D}(A)$, 
we will say that $M$ has finitely generated cohomology over $A$ if for all $n\in \mathbb{Z}$,
the $\mrm{H}^0(A)$-modules $\mrm{H}^n(M)$ are all finitely generated.
We will denote by $\cat{D}_{\mrm{f}}(A)$ the full triangulated subcategory which consists of all DG-modules with finitely generated cohomology. 
We will further write $\cat{D}^{\mrm{b}}_{\mrm{f}}(A)$ for the full triangulated subcategory which consists of DG-modules which have bounded cohomology which is finitely generated. In other words, 
\[
\cat{D}^{\mrm{b}}_{\mrm{f}}(A) = \cat{D}_{\mrm{f}}(A) \cap \cat{D}^{\mrm{b}}(A).
\]

\subsection{Localization}\label{sec:loc}
A commutative noetherian DG-ring $A$ will be called a local DG-ring if the noetherian ring $\mrm{H}^0(A)$ is a local ring.
In that case, if $\bar{\m}$ is the maximal ideal of the local ring $\mrm{H}^0(A)$,
we will indicate this by simply saying that $(A,\bar{\m})$ is a noetherian local DG-ring.

One way to obtain noetherian local DG-rings is using the DG version of localization,
introduced in \cite[Section 4]{YeDual}.
It is defined as follows. 
If $A$ is a commutative DG-ring, and if $\bar{\p} \in \opn{Spec}(\mrm{H}^0(A))$,
let $\p = (\pi^0_A)^{-1}(\bar{\p}) \in \opn{Spec}(A^0)$,
and define $A_{\bar{\p}} = A\otimes_{A^0} A^0_{\p}$.
Then one has $\mrm{H}^0(A_{\bar{\p}}) = \mrm{H}^0(A)_{\bar{\p}}$.
In particular, if $A$ is a noetherian DG-ring,
then $A_{\bar{\p}}$ is a noetherian local DG-ring.
One then obtains a localization functor for $\cat{D}(A) \to \cat{D}(A_{\bar{\p}})$ which
sends a DG-module $M$ to $M_{\bar{\p}} = M\otimes^{\mrm{L}}_A A_{\bar{\p}}$.

\subsection{Regular sequences and Koszul complexes}\label{sec:reg-seq}

Let $(A,\bar{\m})$ be a noetherian local DG-ring,
and let $M\in \cat{D}^{+}(A)$.
An element $\bar{x} \in \bar{\m}$ is called an $M$-regular element if it an $\mrm{H}^{\inf(M)}(M)$-regular element.
In other words, $\bar{x}$ is $M$-regular if multiplication by it on the bottom cohomology of $M$ is an injective map.

To define regular sequences of longer length, 
we first recall the notion of a Koszul complex in this context.

Given any commutative DG-ring $A$,
any given $\bar{a}_1,\dots,\bar{a}_n \in \mrm{H}^0(A)$,
the Koszul complex $K(A;\bar{a}_1,\dots,\bar{a}_n)$ is a commutative non-positive DG-ring,
equipped with a map $A\to K(A;\bar{a}_1,\dots,\bar{a}_n)$.
One way to define it is as follows:
for each $1\le i \le n$, choose some $a_i \in A^0$ such that $\pi_A(a_i) = \bar{a}_i$.
Then, consider the Koszul complex $K(A^0;a_1,\dots,a_n)$ over the ring $A^0$,
and finally, let
\[
K(A;\bar{a}_1,\dots,\bar{a}_n) = K(A^0;a_1,\dots,a_n)\otimes_{A^0} A.
\]
As explained in \cite[Proposition 2.6]{ShKos} (or \cite[Lemma 2.8]{Mi2} in case $n=1$),
the result is independent of the chosen lifts of $\bar{a}_1,\dots,\bar{a}_n$,
up to isomorphism in the homotopy category of DG-rings.
As explained in the introduction of \cite{ShKos},
one should think of this construction as the derived quotient of $A$ with respect to $\bar{a}_1,\dots,\bar{a}_n$.

Returning to the topic of regular sequences,
if $(A,\bar{\m})$ is a noetherian local DG-ring,
and if $M \in \cat{D}^{+}(A)$,
a sequence $\bar{a}_1,\dots,\bar{a}_n \in \bar{\m}$ will be called an $M$-regular sequence if
$\bar{a}_1$ is $M$-regular,
and the sequence $\bar{a}_2,\dots,\bar{a}_n$ is $K(M;\bar{a}_1)$-regular.

Given an ideal $\bar{I} \subseteq \mrm{H}^0(A)$,
and assuming that $M \in \cat{D}^{+}_{\mrm{f}}(A)$,
the $\bar{I}$-depth of $M$ is defined to be the number $\opn{depth}_A(\bar{I},M) = \inf(\mrm{R}\opn{Hom}_A(\mrm{H}^0(A)/\bar{I},M))$.
When $\bar{I} = \bar{\m}$, it is simply called the depth of $M$, 
and it is denoted by $\opn{depth}_A(M)$.
The maximal length of an $M$-regular sequence contained in $\bar{I}$ is a well defined integer 
denoted by $\opn{seq.depth}_A(\bar{I},M)$ and called the $\bar{I}$ sequential depth of $M$.
Again, when $\bar{I}=\bar{\m}$, we will denote it by $\opn{seq.depth}_A(M)$.

Following \cite[Definition 5.8]{ShCM},
a prime ideal $\bar{\p} \in \opn{Spec}(\mrm{H}^0(A))$ is called an associated prime ideal of $M$ if
$\opn{depth}_{A_{\bar{\p}}}(M_{\bar{\p}}) = \inf(M_{\bar{\p}})$.
The set of all associated primes of $M$ is denoted by $\opn{Ass}_A(M)$.
We shall also require the dual notion of the set $W^A_0(M)$,
also introduced in \cite[Definition 5.8]{ShCM}.
This is defined to be the set of all $\bar{\p} \in \opn{Spec}(\mrm{H}^0(A))$,
such that there is an equality
\[
\sup(M_{\bar{\p}}) = \sup_{n\in \mathbb{Z}}(\dim(\mrm{H}^n(M))+n),
\]
where here $\dim$ denotes the Krull dimension of the $\mrm{H}^0(A)$-modules $\mrm{H}^n(M)$.

The notion of a regular sequences recalled here is similar to the notion of an $M$-regular sequence where $M$ is a cochain complex over a noetherian ring. These were extensively studied by Christensen in \cite{ChrI,ChrII}.
We further mention that another notion of regularity in a DG setting was studied by Iyengar in \cite{Iyen}. 

\subsection{Dualizing DG-modules}

If $A$ is a commutative noetherian DG-ring,
a dualizing DG-module over $A$ is a DG-module $R \in \cat{D}^{+}_{\mrm{f}}(A)$,
such that $R$ has finite injective dimension over $A$ (as in \cite[Definition 12.4.8(2)]{YeBook}),
and moreover, the natural map $A \to \mrm{R}\opn{Hom}_A(R,R)$ is an isomorphism.
This notion generalizes the notion of a dualizing complex which is due to Grothendieck.
Dualizing DG-modules were studied in \cite{FIJ, YeDual}.

By \cite[Theorem 4.1]{ShCM},
if $R$ is a dualizing DG-module over $A$,
there is always an inequality $\amp(R) \ge \amp(A)$.

If $(A,\bar{\m})$ is a noetherian local DG-ring,
and if $R,S$ are two different dualizing DG-modules over $A$,
then by \cite[Theorem III]{FIJ} and \cite[Corollary 7.16]{YeDual}
there exist some $n \in \mathbb{Z}$ such that $R \cong S[n]$.
Here $[n]$ denotes the shift functor of the triangulated category $\cat{D}(A)$.

\subsection{Cohen–Macaulay DG-rings}

If $(A,\bar{\m})$ is a commutative noetherian local DG-ring with bounded cohomology,
then by \cite[Corollary 5.5]{ShCM},
there is an inequality 
\[
\opn{seq.depth}_A(A) \le \dim(\mrm{H}^0(A)).
\]
If this inequality is an equality,
that is, if there exist an $A$-regular sequence in $\bar{\m}$ which is of length $\dim(\mrm{H}^0(A))$,
then the DG-ring $A$ is called a local-Cohen–Macaulay DG-ring.

The reason for this somewhat cumbersome terminology is that,
as demonstrated in \cite[Example 8.3]{ShCM},
the classical fact that the localization of a Cohen–Macaulay ring is again Cohen–Macaulay does not always generalize to the DG setting.
Thus, there exist examples of local-Cohen–Macaulay DG-rings $A$,
and prime ideals $\bar{\p} \in \opn{Spec}(\mrm{H}^0(A))$,
such that $A_{\bar{\p}}$ is not local-Cohen–Macaulay.

Because of this, we explicitly distinguish between local-Cohen–Macaulay DG-rings
and Cohen–Macaulay DG-ring. 
The latter are, by definition commutative noetherian DG-rings $A$,
with bounded cohomology, such that for all $\bar{\p} \in \opn{Spec}(\mrm{H}^0(A))$,
the local DG-ring $A_{\bar{\p}}$ is local-Cohen–Macaulay.

If $(A,\bar{\m})$ is a noetherian local DG-ring with bounded cohomology,
and if $A$ admits a dualizing DG-module $R$,
then $A$ is local-Cohen–Macaulay if and only if $\amp(A) = \amp(R)$.
One may also define local-Cohen–Macaulay using local cohomology,
but this will not be needed in this paper.

\section{Local-Cohen–Macaulay DG-rings with constant amplitude}\label{sec:lcm}

The aim of this section is to show that under very mild hypothesis (namely, that the underlying space is catenary),
any local-Cohen–Macaulay DG-ring with constant amplitude is Cohen–Macaulay. 
Recall, following \cite[Definition 3.5]{ShKos},
that a commutative noetherian DG-ring $A$ with bounded cohomology is called a DG-ring with constant amplitude,
if $\opn{Supp}(\mrm{H}^{\inf(A)}(A)) = \opn{Spec}(\mrm{H}^0(A))$.
The reason for this terminology is that this is equivalent to the fact that $\amp(A_{\bar{\p}}) = \amp(A)$ for all $\bar{\p} \in \opn{Spec}(\mrm{H}^0(A))$.

Local-Cohen–Macaulay DG-rings often have constant amplitude:
\begin{prop}\label{prop:const}
Let $(A,\bar{\m})$ be a local-Cohen–Macaulay DG-ring,
and suppose that $\mrm{H}^0(A)$ has a unique minimal prime ideal.
Then $A$ has constant amplitude.
\end{prop}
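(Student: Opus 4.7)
The plan is to show that $\opn{Supp}_{\mrm{H}^0(A)}\bigl(\mrm{H}^{\inf(A)}(A)\bigr) = \opn{Spec}(\mrm{H}^0(A))$. Set $M := \mrm{H}^{\inf(A)}(A)$, $d := \dim\mrm{H}^0(A)$, and let $\bar{\q}$ denote the unique minimal prime of $\mrm{H}^0(A)$. Because $\opn{Spec}(\mrm{H}^0(A))$ is then irreducible of dimension $d$, and $\opn{Supp}(M)$ is a closed subset of it, the task reduces to showing $\dim_{\mrm{H}^0(A)}(M) = d$. Since $\dim M \le d$ is automatic for a finitely generated $\mrm{H}^0(A)$-module, it suffices to produce a classical $M$-regular sequence of length $d$ in $\bar{\m}$.

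My candidate sequence is supplied by local-Cohen--Macaulayness: fix an $A$-regular sequence $\bar{a}_1,\dots,\bar{a}_d \in \bar{\m}$. I first record a useful auxiliary observation: every $A$-regular element of $\bar{\m}$ must avoid $\bar{\q}$. Indeed, $\bar{\q} = \sqrt{(0)}$ in $\mrm{H}^0(A)$, so any element of $\bar{\q}$ is nilpotent in $\mrm{H}^0(A)$ and acts nilpotently on every $\mrm{H}^0(A)$-module, contradicting the nonzerodivisor property of an $A$-regular element on the relevant bottom cohomology. In particular, each $\bar{a}_i \notin \bar{\q}$.

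To show that $\bar{a}_1,\dots,\bar{a}_d$ is classically $M$-regular, I set $K_k := K(A;\bar{a}_1,\dots,\bar{a}_k)$, $L_k := \mrm{H}^{\inf(A)}(K_k)$, and $M_k := M/(\bar{a}_1,\dots,\bar{a}_k)M$, and argue by induction on $k$. The $A$-regularity assumption yields $\inf(K_k) = \inf(A)$ and that $\bar{a}_{k+1}$ is a nonzerodivisor on $L_k$. Because the $\bar{a}_i$ for $i \le k$ vanish in $\mrm{H}^0(K_k)$ and hence act as zero on $L_k$, the natural map $M \to L_k$ induced by $A \to K_k$ descends to a canonical $\mrm{H}^0(A)$-linear map $\varphi_k : M_k \to L_k$. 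The inductive claim is that $\varphi_k$ is injective for every $k$; granting this, $\bar{a}_{k+1}$ is automatically a nonzerodivisor on the submodule $M_k$ of $L_k$, which is precisely the classical $M$-regularity at the next step, so the induction delivers the required length-$d$ classical $M$-regular sequence.

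The main obstacle is the inductive step establishing injectivity of $\varphi_k$. Starting from $\varphi_{k-1}: M_{k-1} \hookrightarrow L_{k-1}$, the Koszul long exact sequence coming from the triangle $K_{k-1} \xrightarrow{\bar{a}_k} K_{k-1} \to K_k$, combined with the $A$-regularity of $\bar{a}_k$ on $L_{k-1}$, yields
\[
0 \to L_{k-1}/\bar{a}_k L_{k-1} \to L_k \to \mrm{H}^{\inf(A)+1}(K_{k-1})[\bar{a}_k] \to 0,
\]
where $[\bar{a}_k]$ denotes the $\bar{a}_k$-torsion submodule. A snake-lemma analysis applied to $\varphi_{k-1}$ and multiplication by $\bar{a}_k$ then reduces injectivity of $\varphi_k$ to showing that $\bar{a}_k$ acts as a nonzerodivisor on the cokernel of $\varphi_{k-1}$. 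Controlling this torsion by iterating the Koszul short exact sequences and exploiting the full $A$-regularity of the sequence $\bar{a}_1,\dots,\bar{a}_d$ upward through cohomological degrees is the technical core of the argument. Once the injectivity is in hand, the classical bound $\opn{depth}_{\bar{\m}}(M) \le \dim M$ combined with the length-$d$ $M$-regular sequence gives $\dim M \ge d$, completing the proof.
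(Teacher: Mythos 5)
Your overall skeleton matches the paper's: reduce the statement to the dimension equality $\dim_{\mrm{H}^0(A)}\bigl(\mrm{H}^{\inf(A)}(A)\bigr) = \dim(\mrm{H}^0(A))$, and then use the unique minimal prime to upgrade this to full support (the paper quotes \cite[Proposition 4.11]{ShCM} for the first step and \cite[Proposition 8.5]{ShCM} for the second; your irreducibility argument for the second step is fine). The gap is in your proof of the dimension equality. You propose to show that a DG-theoretic $A$-regular sequence $\bar{a}_1,\dots,\bar{a}_d$ is a \emph{classical} regular sequence on $M = \mrm{H}^{\inf(A)}(A)$ by proving inductively that the natural map $\varphi_k : M_k \to L_k$ is injective, but you never prove the inductive step: you only reduce it to the assertion that $\bar{a}_k$ is a nonzerodivisor on $\opn{coker}(\varphi_{k-1})$ and label this ``the technical core.'' That is precisely where the argument breaks. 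By your own short exact sequence, $\opn{coker}(\varphi_{k-1})$ is assembled from the torsion modules $\mrm{H}^{\inf(A)+1}(K_j)[\bar{a}_{j+1}]$; already $\opn{coker}(\varphi_1) \cong \mrm{H}^{\inf(A)+1}(A)[\bar{a}_1]$. The hypothesis that the sequence is $A$-regular constrains only the action of each $\bar{a}_{j+1}$ on the \emph{bottom} cohomology $L_j = \mrm{H}^{\inf(A)}(K_j)$ and gives no control whatsoever over nonzerodivisibility on these degree-$(\inf(A)+1)$ torsion modules, so the induction does not close.

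Note that the case $k=1$ does work ($M_1 = L_0/\bar{a}_1 L_0$ embeds into $L_1$ by the Koszul long exact sequence), so your method does show that DG-regular sequences of length two are classically $M$-regular; the obstruction is genuine from $k=2$ onward, where $\varphi_k$ can a priori acquire a kernel coming from elements of $\mrm{H}^{\inf(A)+1}$ killed by several of the $\bar{a}_i$. The statement you actually need --- that some length-$d$ classical $M$-regular sequence exists, equivalently that $\dim M = \dim \mrm{H}^0(A)$ for a local-Cohen--Macaulay DG-ring --- is true, but it is essentially the content of \cite[Proposition 4.11]{ShCM}, and reproving it requires input beyond the bottom-degree regularity you are using (the cited result is not obtained by propagating injectivity of the maps $\varphi_k$). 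As written, the proposal sets up the correct reduction but leaves the essential point unestablished.
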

\begin{proof}
Let $n = \inf(A)$. 
According to \cite[Proposition 4.11]{ShCM},
the fact that $A$ is local-Cohen–Macaulay implies that
\[
\dim(\mrm{H}^n(A)) = \dim(\mrm{H}^0(A)).
\]
Then, by \cite[Proposition 8.5]{ShCM},
the fact that $\mrm{H}^0(A)$ has a unique minimal prime ideal implies that
\[
\opn{Supp}(\mrm{H}^n(A)) = \opn{Spec}(\mrm{H}^0(A)),
\]
so that $A$ has constant amplitude.
\end{proof}

Here is a useful property of DG-rings with constant amplitude.

\begin{prop}\label{prop:ass-primes}
Let $(A,\bar{\m})$ be a local noetherian DG-ring with bounded cohomology and constant amplitude,
Then there is an equality $\opn{Ass}(A) = \opn{Ass}(\mrm{H}^{\inf(A)}(A))$.
\end{prop}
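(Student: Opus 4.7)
The plan is to verify, prime by prime, that $\bar{\p} \in \opn{Ass}(A)$ if and only if $\bar{\p} \in \opn{Ass}(\mrm{H}^{\inf(A)}(A))$, using the definition recalled in Section~\ref{sec:reg-seq}. Write $n = \inf(A)$. The constant amplitude hypothesis, i.e.\ $\opn{Supp}(\mrm{H}^n(A)) = \opn{Spec}(\mrm{H}^0(A))$, is equivalent to $\amp(A_{\bar{\p}}) = \amp(A)$ for every $\bar{\p}$; since $A_{\bar{\p}}$ is non-positive with $\mrm{H}^0(A_{\bar{\p}}) = \mrm{H}^0(A)_{\bar{\p}} \neq 0$, this forces $\inf(A_{\bar{\p}}) = n$ and also $\mrm{H}^n(A)_{\bar{\p}} \neq 0$ (so its infimum as a DG-module concentrated in degree~$0$ is exactly $0$). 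Thus the claim reduces to the equivalence
\[
\opn{depth}_{A_{\bar{\p}}}(A_{\bar{\p}}) = n \iff \opn{depth}_{A_{\bar{\p}}}(\mrm{H}^n(A)_{\bar{\p}}) = 0
\]
holding for every $\bar{\p} \in \opn{Spec}(\mrm{H}^0(A))$.

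The key tool is the smart truncation triangle of $A_{\bar{\p}}$ in $\cat{D}(A_{\bar{\p}})$,
\[
\mrm{H}^n(A)_{\bar{\p}}[-n] \longrightarrow A_{\bar{\p}} \longrightarrow \tau^{>n}(A_{\bar{\p}}) \longrightarrow \mrm{H}^n(A)_{\bar{\p}}[-n+1],
\]
in which $\inf(\tau^{>n}(A_{\bar{\p}})) \ge n+1$. I would apply $\mrm{R}\opn{Hom}_{A_{\bar{\p}}}(\mrm{H}^0(A)_{\bar{\p}}/\bar{\p}\mrm{H}^0(A)_{\bar{\p}}, -)$ to this triangle and read off infima. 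The left-hand term contributes precisely $n + \opn{depth}_{A_{\bar{\p}}}(\mrm{H}^n(A)_{\bar{\p}})$, and the right-hand term contributes at least $n+1$, using the elementary bound $\inf(\mrm{R}\opn{Hom}(N',N)) \ge \inf(N)$ for any bounded below DG-module $N$.

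A short case analysis then finishes the proof. If $\opn{depth}_{A_{\bar{\p}}}(\mrm{H}^n(A)_{\bar{\p}}) = 0$, the left-hand term has infimum exactly $n$ while the right-hand term has infimum strictly greater than $n$; the long exact sequence of cohomology groups then forces $\opn{depth}_{A_{\bar{\p}}}(A_{\bar{\p}}) = n = \inf(A_{\bar{\p}})$, so $\bar{\p} \in \opn{Ass}(A)$. Conversely, if $\opn{depth}_{A_{\bar{\p}}}(\mrm{H}^n(A)_{\bar{\p}}) > 0$, both outer infima are $\ge n+1$, so $\opn{depth}_{A_{\bar{\p}}}(A_{\bar{\p}}) \ge n+1 > \inf(A_{\bar{\p}})$ and $\bar{\p} \notin \opn{Ass}(A)$. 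This gives the desired equivalence, since $\opn{depth}_{A_{\bar{\p}}}(\mrm{H}^n(A)_{\bar{\p}}) = 0$ is precisely the condition for $\bar{\p}$ to lie in $\opn{Ass}(\mrm{H}^n(A))$ (note that $\inf(\mrm{H}^n(A)_{\bar{\p}}) = 0$ for every $\bar{\p}$ by constant amplitude, so this subtlety in the DG-definition of associated primes disappears). The only real obstacle is the bookkeeping with infima on the triangle; once constant amplitude has been used to fix $\inf(A_{\bar{\p}}) = n$ and $\inf(\mrm{H}^n(A)_{\bar{\p}}) = 0$ uniformly, the rest is routine.
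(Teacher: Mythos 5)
Your proof is correct. The paper's own argument has the same skeleton---localize at $\bar{\p}$, use constant amplitude to identify $\inf(A_{\bar{\p}})$ with $\inf(A)$ and $\mrm{H}^{\inf(A)}(A_{\bar{\p}})$ with $\left(\mrm{H}^{\inf(A)}(A)\right)_{\bar{\p}}$, and then translate the condition $\opn{depth}_{A_{\bar{\p}}}(A_{\bar{\p}}) = \inf(A_{\bar{\p}})$ into an associated-prime condition on the bottom cohomology---but it outsources the key equivalence to \cite[Proposition 3.3]{ShCM} and the localization of associated primes to \cite[Tag 0310]{SP}. What you do differently is re-derive that cited equivalence from scratch: the smart truncation triangle together with the bound $\inf(\mrm{R}\opn{Hom}(N',N)) \ge \inf(N) - \sup(N')$ shows that the infimum of $\mrm{R}\opn{Hom}_{A_{\bar{\p}}}(\mrm{H}^0(A_{\bar{\p}})/\bar{\p}\mrm{H}^0(A_{\bar{\p}}), A_{\bar{\p}})$ equals $n$ exactly when $\opn{Hom}(\mrm{H}^0(A_{\bar{\p}})/\bar{\p}\mrm{H}^0(A_{\bar{\p}}), \mrm{H}^n(A)_{\bar{\p}}) \ne 0$, which is the content of the cited proposition. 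This makes the proof self-contained at the cost of some bookkeeping, and your case analysis on the long exact sequence is sound. The only point you pass over quickly is the identification of the DG-depth $\opn{depth}_{A_{\bar{\p}}}(\mrm{H}^n(A)_{\bar{\p}})$ with the classical depth of the module $\mrm{H}^n(A)_{\bar{\p}}$ over the ring $\mrm{H}^0(A)_{\bar{\p}}$, which is what you need to conclude $\bar{\p} \in \opn{Ass}(\mrm{H}^n(A))$; this follows from the adjunction along $A_{\bar{\p}} \to \mrm{H}^0(A_{\bar{\p}})$ and the same infimum bound, so the gap is cosmetic rather than substantive.
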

\begin{proof}
Given $\bar{\p} \in \opn{Spec}(\mrm{H}^0(A))$,
by definition we have that $\bar{\p} \in \opn{Ass}(A)$ if 
$\opn{depth}_{A_{\bar{\p}}}(A_{\bar{\p}}) = \inf(A_{\bar{\p}})$.
According to \cite[Proposition 3.3]{ShCM},
this is the case if and only if $\bar{\p}\cdot \mrm{H}^0(A_{\bar{\p}})$
is an associated prime of $\mrm{H}^{\inf(A_{\bar{\p}})}(A_{\bar{\p}})$.
The fact that $A$ has constant amplitude implies that $\inf(A_{\bar{\p}}) = \inf(A)$,
so we see that $\bar{\p} \in \opn{Ass}(A)$ if and only if $\bar{\p}\cdot \mrm{H}^0(A_{\bar{\p}})$
is an associated prime of
\[
\mrm{H}^{\inf(A)}(A_{\bar{\p}}) \cong \left(\mrm{H}^{\inf(A)}(A)\right)_{\bar{\p}}.
\]
According to \cite[Theorem 6.2]{Mat},
this is equivalent to $\bar{\p} \in \opn{Ass}(\mrm{H}^{\inf(A)}(A))$,
as claimed.
\end{proof}

The following result is similar to \cite[Proposition 3.9]{ShKos}. 
The conclusion is the same, but the assumptions we make here are a bit different (there $A$ is assumed to be Cohen–Macaulay,
and not just local-Cohen–Macaulay, but on the other hand, it is not assumed there that $A$ has constant amplitude).

\begin{prop}\label{prop:minimalPrimes}
Let $(A,\bar{\m})$ be a local-Cohen–Macaulay DG-ring with constant amplitude.
Then $\bar{\p} \in \opn{Ass}(A)$ if and only if $\bar{\p} \in W^A_0(A)$ if and only if $\opn{ht}(\bar{\p}) = 0$.
\end{prop}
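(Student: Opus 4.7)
The strategy is to reduce to a statement about the bottom cohomology and apply classical commutative algebra, mirroring the approach of \cite[Proposition 3.9]{ShKos}, with the constant-amplitude hypothesis substituting for the full Cohen--Macaulay hypothesis used there. Write $n = \inf(A)$, $d = \dim(\mrm{H}^0(A))$, and $H := \mrm{H}^n(A)$, viewed as an $\mrm{H}^0(A)$-module. The main reductive tool is Proposition \ref{prop:ass-primes}, which gives $\opn{Ass}(A) = \opn{Ass}(H)$ and converts the associated-prime question into classical module theory over $\mrm{H}^0(A)$.

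For the equivalence $\bar{\p} \in \opn{Ass}(A) \iff \opn{ht}(\bar{\p}) = 0$, the plan is to show that $H$ is a Cohen--Macaulay $\mrm{H}^0(A)$-module of dimension $d$ with full support. The dimension identity $\dim(H) = d$ is \cite[Proposition 4.11]{ShCM} (already invoked in the proof of Proposition \ref{prop:const}), and the full-support claim is the very definition of constant amplitude. For the depth, a maximal $A$-regular sequence $\bar{x}_1, \dots, \bar{x}_d \in \bar{\m}$ must be shown to descend to an $H$-regular sequence: $\bar{x}_1$ is $H$-regular by the definition of $A$-regularity, and the inductive step uses the triangle $K(A;\bar{x}_1) \to A \xrightarrow{\bar{x}_1} A$, whose cohomology long exact sequence exhibits $H/\bar{x}_1 H$ as an $\mrm{H}^0(A)$-submodule of the bottom cohomology of $K(A;\bar{x}_1)$; hence $\bar{x}_2$-regularity on the latter restricts to $\bar{x}_2$-regularity on $H/\bar{x}_1 H$, and iteration produces the required regular sequence. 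With $H$ Cohen--Macaulay over $\mrm{H}^0(A)$ with full support, the classical identification of the associated primes of such a module with the minimal primes of $\mrm{H}^0(A)$ closes this case.

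For the equivalence with $W^A_0(A)$, I would unwind the definition from \cite[Definition 5.8]{ShCM}. Since $A$ is non-positive and $\mrm{H}^0(A)_{\bar{\p}} \neq 0$, one has $\sup(A_{\bar{\p}}) = 0$ unconditionally. On the other side, each $\mrm{H}^i(A)_{\bar{\p}}$ is a module over the local ring $\mrm{H}^0(A)_{\bar{\p}}$, so its Krull dimension is at most $\opn{ht}(\bar{\p})$, with equality at $i = 0$; since the supremum ranges over $i \le 0$, it equals $\opn{ht}(\bar{\p})$. Thus the defining equality of $W^A_0(A)$ collapses to $0 = \opn{ht}(\bar{\p})$, yielding the third equivalence.

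The main delicate step throughout is the descent of the regular sequence to $H$: the bottom cohomology of $K(A;\bar{x}_1)$ is not $H/\bar{x}_1 H$ itself but an extension of the $\bar{x}_1$-torsion in $\mrm{H}^{n+1}(A)$ by $H/\bar{x}_1 H$, so one must carefully use the short exact sequence extracted from the Koszul cohomology long exact sequence to identify $H/\bar{x}_1 H$ as the relevant submodule and propagate regularity through successive Koszul DG-rings.
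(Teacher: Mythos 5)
Your reduction via Proposition \ref{prop:ass-primes}, and the use of full support to get $\opn{Min}(\opn{Spec}(\mrm{H}^0(A))) \subseteq \opn{Ass}(\mrm{H}^{\inf(A)}(A)) = \opn{Ass}(A)$, is exactly the paper's argument for the direction $\opn{ht}(\bar{\p})=0 \Rightarrow \bar{\p}\in\opn{Ass}(A)$. The gap is in the converse, where you aim for the much stronger claim that $H=\mrm{H}^{\inf(A)}(A)$ is a maximal Cohen--Macaulay $\mrm{H}^0(A)$-module by descending a maximal $A$-regular sequence $\bar x_1,\dots,\bar x_d$ to $H$. Your inductive step only works once. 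Writing $\iota=\inf(A)$ and $K_j=K(A;\bar x_1,\dots,\bar x_j)$, the Koszul triangle gives
\[
0 \to \mrm{H}^{\iota}(K_{j-1})/\bar x_j \mrm{H}^{\iota}(K_{j-1}) \to \mrm{H}^{\iota}(K_j) \to \ker\bigl(\bar x_j \mid \mrm{H}^{\iota+1}(K_{j-1})\bigr) \to 0,
\]
so at stage $j=1$ you do get $H/\bar x_1 H\hookrightarrow\mrm{H}^{\iota}(K_1)$ and hence $\bar x_2$-regularity on $H/\bar x_1 H$. But at stage $j=2$ you need $H/(\bar x_1,\bar x_2)H\hookrightarrow\mrm{H}^{\iota}(K_2)$, and the map you actually have, $H/(\bar x_1,\bar x_2)H\to\mrm{H}^{\iota}(K_1)/\bar x_2\mrm{H}^{\iota}(K_1)$, is obtained by reducing the inclusion $H/\bar x_1 H\hookrightarrow\mrm{H}^{\iota}(K_1)$ modulo $\bar x_2$; its kernel is governed by the $\bar x_2$-torsion of the cokernel $\ker(\bar x_1\mid\mrm{H}^{\iota+1}(A))$, which has no reason to vanish, since an $A$-regular sequence need not act regularly on the higher cohomology modules (that is precisely the content of Theorem \ref{thm:DoubleCM}, unavailable here because $\mrm{H}^0(A)$ is not assumed Cohen--Macaulay). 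So ``iteration produces the required regular sequence'' is unjustified from length three onward, and the intermediate claim that $H$ is maximal Cohen--Macaulay is both stronger than the proposition and not established anywhere in the paper. The paper sidesteps all of this by quoting \cite[Theorem 3.8]{ShKos} --- the DG analogue of $\opn{depth}(M)\le\dim(A/\p)$ for $\p\in\opn{Ass}(M)$ --- which combined with $\opn{seq.depth}_A(A)=\dim(\mrm{H}^0(A))$ immediately forces $\dim(\mrm{H}^0(A)/\bar{\p})=\dim(\mrm{H}^0(A))$, hence $\opn{ht}(\bar{\p})=0$.

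Your treatment of $W^A_0(A)$ also proves too much: as you unwind it, the condition collapses to $\opn{ht}(\bar{\p})=0$ for an arbitrary noetherian local DG-ring, using neither the local-Cohen--Macaulay nor the constant-amplitude hypothesis, which would make the notion (and its appearance in this proposition, and the paper's appeal to \cite[Proposition 3.9]{ShKos} for it) vacuous. This stems from reading the displayed formula in Section \ref{sec:reg-seq} with all cohomologies localized at $\bar{\p}$; the definition in \cite[Definition 5.8]{ShCM} compares the local invariant at $\bar{\p}$ with a global dimension invariant of $M$, and the equivalence with $\opn{ht}(\bar{\p})=0$ genuinely uses the equidimensionality of $\mrm{H}^0(A)$, which is itself a by-product of the $\opn{Ass}$ equivalence. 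You should verify the definition at the source rather than derive this part from the (apparently garbled) display in the preliminaries.
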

\begin{proof}
Suppose first that $\bar{\p} \in \opn{Ass}(A)$.
According to \cite[Theorem 3.8]{ShKos},
we know that 
\[
\opn{seq.depth}_A(A) \le \dim(\mrm{H}^0(A/\bar{\p})),
\]
but $A$ being local-Cohen–Macaulay means that $\opn{seq.depth}_A(A) = \dim(\mrm{H}^0(A))$,
so we must have that $\dim(\mrm{H}^0(A/\bar{\p})) = \dim(\mrm{H}^0(A))$,
which implies that $\opn{ht}(\bar{\p}) = 0$.
Conversely, if $\opn{ht}(\bar{\p}) = 0$,
then $\bar{\p}$ is a minimal prime ideal of $\opn{Spec}(\mrm{H}^0(A))$,
and since $A$ has constant amplitude,
it is also a minimal prime ideal of $\opn{Supp}(\mrm{H}^{\inf(A)}(A))$.
By \cite[Theorem 6.5]{Mat} this implies that $\bar{\p} \in \opn{Ass}(\mrm{H}^{\inf(A)}(A))$,
so Proposition \ref{prop:ass-primes} implies that $\bar{\p} \in \opn{Ass}(A)$.
Finally, the fact that $\bar{\p} \in W^A_0(A)$ if and only if $\opn{ht}(\bar{\p}) = 0$
is proved in \cite[Proposition 3.9]{ShKos},
and the same proof given there applies in this setting as well.
\end{proof}

The next lemma is a result in dimension theory of rings which will be needed in the result following it.
\begin{lem}\label{lem:caten}
Let $(A,\m)$ be a noetherian local ring, and suppose that $A$ is catenary.
Let $\p \in \opn{Spec}(A)$, and let $x \in \p$ be an element such that $\dim(A/x) = \dim(A) - 1$.
Then $\dim((A/x)_{\p}) = \dim(A_{\p})-1$.
\end{lem}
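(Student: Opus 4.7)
The plan is to prove the two inequalities $\dim((A/x)_\p) \geq \dim(A_\p) - 1$ and $\dim((A/x)_\p) \leq \dim(A_\p) - 1$ separately. The first inequality is immediate from Krull's principal ideal theorem applied to the local ring $A_\p$: since $x\in\p$, its image is a non-unit in $A_\p$, so every prime of $A_\p$ minimal over $xA_\p$ has height at most $1$, whence $\dim(A_\p/xA_\p)\geq\dim(A_\p)-1$.

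For the upper bound, I would first convert the hypothesis $\dim(A/x) = \dim(A) - 1$ into a statement about minimal primes, using catenarity. Specifically, I claim it is equivalent to asserting that $x$ lies in no minimal prime $\q_0$ of $A$ with $\dim(A/\q_0) = \dim(A)$. The forward direction is automatic: a minimal prime containing $x$ is itself minimal over $(x)$, so $\dim(A/\q_0) \leq \dim(A/x)$. For the converse, for any $\p_0$ minimal over $(x)$ either $\opn{ht}(\p_0) = 0$, in which case $\p_0$ is a minimal prime that by hypothesis is not of maximal dimension, or $\opn{ht}(\p_0) = 1$ by Krull, and catenarity gives $\dim(A/\p_0) = \dim(A/\q) - 1$ for any minimal prime $\q \subsetneq \p_0$, forcing $\dim(A/\p_0) \leq \dim(A) - 1$.

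Next, I would transfer the reformulated hypothesis to the localization. Compute $\dim(A_\p/xA_\p)$ as the maximum of $\opn{ht}(\p/\q)$ taken over primes $\q$ of $A$ minimal over $(x)$ and contained in $\p$. Catenarity in $A$ yields $\opn{ht}(\p/\q) = \dim(A/\q) - \dim(A/\p)$, and the reformulated hypothesis gives $\dim(A/\q) \leq \dim(A) - 1$ for each such $\q$; hence $\dim(A_\p/xA_\p) \leq \dim(A) - 1 - \dim(A/\p)$. A parallel application of catenarity to a maximal-dimensional minimal prime $\q_0 \subseteq \p$ of $A$ yields $\dim(A_\p) = \opn{ht}(\p/\q_0) = \dim(A) - \dim(A/\p)$, and combining the two produces $\dim(A_\p/xA_\p) \leq \dim(A_\p) - 1$, completing the proof.

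The step I expect to be most delicate is the identity $\dim(A_\p) + \dim(A/\p) = \dim(A)$, which does not follow from catenarity of $A$ alone; it requires that $\p$ contain a minimal prime of $A$ of maximal dimension. I anticipate this being either an implicit consequence of how the lemma is invoked (for example, the ambient DG-ring being assumed equidimensional, or $\p$ being constrained to lie in a top-dimensional component) or absorbed into the standing catenarity hypothesis as the author intends it.
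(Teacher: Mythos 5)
Your argument is in substance the same as the paper's: one inequality from Krull's Hauptidealsatz, the other from chain-length bookkeeping in a catenary ring (the paper packages the second half as a proof by contradiction, extending a saturated chain from a prime $\p_0$ minimal over $(x)$ through $\p$ up to $\m$). The delicate step you isolated is exactly where both arguments live or die: the paper's contradiction is extracted from the line ``$\dim(A) = \dim(A_{\p}) + \dim(A/\p) = m = \dim(A)-1$'', whose first equality is precisely the identity $\dim(A_{\p})+\dim(A/\p)=\dim(A)$ that you flagged, invoked there with no more justification than you were able to supply. This identity genuinely fails for catenary local rings that are not equidimensional, and with it the lemma as stated fails. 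Concretely, take $A=\k[[u,v,w]]/\bigl((u)\cap(v,w)\bigr)$, $x=v$, $\p=(v,w)$. Then $A$ is catenary and $\dim(A)=2$; moreover $A/xA\cong \k[[u,w]]/(uw)$ has dimension $1=\dim(A)-1$, so the hypotheses hold. But $\p$ is a minimal prime of $A$ (it does not contain $(u)$), so $\dim(A_{\p})=0$, while $(A/x)_{\p}$ is a nonzero ring, so $\dim((A/x)_{\p})=0\ne\dim(A_{\p})-1$. So what you found is not a soft spot in your own write-up but a missing hypothesis in the statement: one needs $\p$ to contain a minimal prime $\q_0$ with $\dim(A/\q_0)=\dim(A)$, e.g.\ it suffices that $A$ be equidimensional.

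Your closing guess about how this is absorbed is also correct. The lemma is used exactly once, at the end of the proof of Theorem \ref{thm:lCMisCM}, applied to $\mrm{H}^0(A)$ for $A$ a local-Cohen--Macaulay DG-ring with constant amplitude. There the proof of Proposition \ref{prop:minimalPrimes} shows that every prime $\bar{\q}$ of height $0$ lies in $\opn{Ass}(A)$ and hence satisfies $\dim(\mrm{H}^0(A)/\bar{\q})=\dim(\mrm{H}^0(A))$; that is, $\mrm{H}^0(A)$ is equidimensional, the dimension formula $\dim(A_{\p})+\dim(A/\p)=\dim(A)$ holds for every $\p$, and both your argument and the paper's go through. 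Aside from this shared issue, the remaining steps of your computation --- the reformulation of $\dim(A/x)=\dim(A)-1$ as ``$x$ avoids the top-dimensional minimal primes,'' and the formula $\opn{ht}(\p/\q)=\dim(A/\q)-\dim(A/\p)$, which is legitimate because the catenary local \emph{domain} $A/\q$ does satisfy the dimension formula --- are correct.
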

\begin{proof}
By Krull's Hauptidealsatz we know that $\dim(A_{\p})-1 \le \dim((A/x)_{\p}) \le \dim(A_{\p})$.
Suppose that $\dim((A/x)_{\p}) = \dim(A_{\p}) = n$.
Then there is a maximal chain of prime ideals contained in $\p$ in $A$ of the form
\[
\p_0 \subsetneq \p_1 \subsetneq \dots \subsetneq \p_n = \p,
\]
and moreover, there is such a chain of length $n$ with the property that $x \in \p_0$,
and that $\p_0$ has minimal height among all the prime ideals that contain $x$.
The fact that $A$ is catenary implies that we may extend this chain to a maximal chain
\[
\p_0 \subsetneq \p_1 \subsetneq \dots \subsetneq \p_n = \p \subsetneq \dots \subsetneq \p_m = \m
\]
whose length is equal to $m = \dim(A/\p_0) = \dim(A/x) = \dim(A)-1$.
However, the maximality of this chain also implies that $\dim(A/\p) = m-n$,
so we deduce that $\dim(A) = \dim(A_{\p}) + \dim(A/\p) = m = \dim(A)-1$,
which is a contradiction. Hence, $\dim((A/x)_{\p}) = \dim(A_{\p})-1$.
\end{proof}

The next result is essentially a generalization of \cite[Theorem 8.4]{ShCM}.
The proof structure of this result is built on the proof of \cite[Theorem 17.3(iii)]{Mat},
but the situation here is considerably more complicated.

\begin{thm}\label{thm:lCMisCM}
Let $(A,\bar{\m})$ be a local-Cohen–Macaulay DG-ring with constant amplitude such that $\mrm{H}^0(A)$ is catenary.
Then $A$ is a Cohen–Macaulay DG-ring.
\end{thm}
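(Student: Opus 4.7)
The plan is to induct on $d := \dim \mrm{H}^0(A)$. When $d = 0$ there is nothing to prove: the only prime is $\bar{\m}$ and $A_{\bar{\m}} = A$ is local-Cohen–Macaulay by hypothesis. So assume $d \ge 1$ and fix an arbitrary $\bar{\p} \in \opn{Spec}(\mrm{H}^0(A))$; the task is to show that $A_{\bar{\p}}$ is local-Cohen–Macaulay. If $\opn{ht}(\bar{\p}) = 0$ this is trivial, so from now on assume $\opn{ht}(\bar{\p}) \ge 1$. Combining Propositions \ref{prop:ass-primes} and \ref{prop:minimalPrimes}, the set $\opn{Ass}(A) = \opn{Ass}(\mrm{H}^{\inf(A)}(A))$ consists precisely of the (finitely many) minimal primes of $\mrm{H}^0(A)$, and $\bar{\p}$ is contained in none of them. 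Prime avoidance then produces an element $\bar{x} \in \bar{\p}$ avoiding every associated prime, so $\bar{x}$ is $\mrm{H}^{\inf(A)}(A)$-regular, i.e. $A$-regular.

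The key reduction is to pass to $B := K(A;\bar{x})$, and to check that $B$ satisfies the hypotheses of the theorem with $\dim \mrm{H}^0(B) = d-1$. Specifically: (i) $B$ has bounded cohomology with $\inf(B) = \inf(A)$ (by $A$-regularity of $\bar{x}$) and $\sup(B) = 0$; (ii) $\mrm{H}^0(B) = \mrm{H}^0(A)/(\bar{x})$ is catenary as a quotient of a catenary ring, and has dimension exactly $d-1$ because $\bar{x}$ avoids every minimal prime of $\mrm{H}^0(A)$; (iii) $B$ has constant amplitude, since
\[
\opn{Supp}\!\big(\mrm{H}^{\inf(B)}(B)\big) \;=\; V(\bar{x}) \cap \opn{Supp}\!\big(\mrm{H}^{\inf(A)}(A)\big) \;=\; V(\bar{x}) \;=\; \opn{Spec}(\mrm{H}^0(B)),
\]
using that $A$ has constant amplitude; and (iv) $B$ is local-Cohen–Macaulay, because the standard identity $\opn{seq.depth}_A(A) = \opn{seq.depth}_B(B) + 1$ for $A$-regular $\bar{x}$ yields $\opn{seq.depth}_B(B) = d - 1 = \dim \mrm{H}^0(B)$. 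The inductive hypothesis, applied to $B$, now says that $B$ is a Cohen–Macaulay DG-ring.

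To finish, set $\bar{\p}' := \bar{\p}/(\bar{x}) \in \opn{Spec}(\mrm{H}^0(B))$. Since Koszul complexes are stable under base change, $B_{\bar{\p}'} \simeq K(A_{\bar{\p}}; \bar{x})$, which is local-Cohen–Macaulay by the previous paragraph. Lemma \ref{lem:caten}, applied to the catenary local ring $\mrm{H}^0(A)$ and the element $\bar{x}$ (for which $\dim \mrm{H}^0(A)/(\bar{x}) = d-1$), gives $\opn{ht}(\bar{\p}') = \opn{ht}(\bar{\p}) - 1$. Hence there is a $B_{\bar{\p}'}$-regular sequence inside $\bar{\p}'$ of length $\opn{ht}(\bar{\p}) - 1$; prepending $\bar{x}$ (which remains $A_{\bar{\p}}$-regular after localization) yields an $A_{\bar{\p}}$-regular sequence in $\bar{\p} A_{\bar{\p}}$ of length $\opn{ht}(\bar{\p}) = \dim \mrm{H}^0(A_{\bar{\p}})$, proving that $A_{\bar{\p}}$ is local-Cohen–Macaulay.

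The principal obstacle is step (iv): verifying that killing a single $A$-regular element drops the sequential depth by exactly one. The inequality $\opn{seq.depth}_B(B) \ge \opn{seq.depth}_A(A) - 1$ is easy by prepending, but the reverse inequality requires a careful comparison of regular sequences of maximal length through the Koszul quotient, a fact that in spirit mirrors the classical permutation/ascent behavior of regular sequences and that should be extractable from the machinery of \cite{ShKos}. Once this single technical identity is in hand, the rest of the induction is bookkeeping, using the catenary hypothesis exactly once (through Lemma \ref{lem:caten}) to convert a height drop in $\mrm{H}^0(A)/(\bar{x})$ back into a height drop in $\mrm{H}^0(A)$.
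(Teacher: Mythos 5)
Your overall architecture coincides with the paper's: both arguments use Propositions \ref{prop:ass-primes} and \ref{prop:minimalPrimes} to identify $\opn{Ass}(A)$ with the set of minimal primes, choose by prime avoidance an element $\bar{x}\in\bar{\p}$ outside all of them, pass to $B=K(A;\bar{x})$, check that $B$ inherits the hypotheses, apply an inductive hypothesis, identify $B_{\bar{\p}'}$ with $K(A_{\bar{\p}};\bar{x}/1)$, use constant amplitude to see that $\bar{x}/1$ remains $A_{\bar{\p}}$-regular, and invoke Lemma \ref{lem:caten} to convert the dimension drop. The only structural difference is the induction variable: you induct on $\dim\mrm{H}^0(A)$ and use the global conclusion for $B$, whereas the paper inducts on $\opn{seq.depth}(\bar{\p},A)$ and only needs the conclusion for $B$ at the single prime $\bar{\q}$. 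Either scheme is viable.

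The genuine gap is in the step you yourself single out as the crux, item (iv), and it is that you have the two directions of the inequality backwards. Prepending $\bar{x}$ to a maximal $B$-regular sequence produces an $A$-regular sequence of length $\opn{seq.depth}_B(B)+1$, which gives $\opn{seq.depth}_A(A)\ge\opn{seq.depth}_B(B)+1$, i.e.\ $\opn{seq.depth}_B(B)\le\opn{seq.depth}_A(A)-1$. That is \emph{not} the direction you need: since $\opn{seq.depth}_B(B)\le\dim\mrm{H}^0(B)$ holds automatically by \cite[Corollary 5.5]{ShCM}, what must be proved is $\opn{seq.depth}_B(B)\ge d-1$, equivalently that some maximal $A$-regular sequence can be arranged to begin with your chosen $\bar{x}$. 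This is exactly the nontrivial ``permutability'' direction that you dismiss as easy, and it is where the local-Cohen--Macaulay hypothesis actually does its work: the paper obtains $\opn{seq.depth}(K(A;\bar{x}))=\opn{seq.depth}(A)-1$ from $A$ being local-Cohen--Macaulay via the depth formulas of \cite{ShKos} (the identity $\opn{depth}=\opn{seq.depth}+\inf$ together with the behavior of $\inf$ and of depth under Koszul quotients by regular elements), not by prepending. The statement is true and the proof is repairable by citing that machinery for the correct direction, but as written the one inequality your induction actually requires is unjustified.
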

\begin{proof}
Let $\bar{\p} \in \opn{Spec}(\mrm{H}^0(A))$.
We must show that $A_{\bar{\p}}$ is a local-Cohen–Macaulay DG-ring.
We will prove this by induction on $\opn{seq.depth}(\bar{\p},A)$.
We remark that this quantity is finite by \cite[Proposition 3.2]{ShKos}.
Assume first that 
\[
\opn{seq.depth}(\bar{\p},A) = 0.
\]
Then every element of $\bar{\p}$ is a zero-divisor of $\mrm{H}^{\inf(A)}(A)$.
By \cite[Thoerem 6.1]{Mat},
this implies that $\bar{\p}$ is contained in the union of the associated primes of $\mrm{H}^{\inf(A)}(A)$,
and since there are only finitely many such primes, 
by the prime avoidance lemma, it must be contained in one of these associated primes, say
\[
\bar{\p} \subseteq \bar{\q} \in \opn{Ass}(\mrm{H}^{\inf(A)}(A)).
\]
According to Proposition \ref{prop:ass-primes}, 
we see that $\bar{\q} \in \opn{Ass}(A)$,
so by Proposition \ref{prop:minimalPrimes} we deduce that $\opn{ht}(\bar{\q}) = 0$,
which implies that $\bar{\p} = \bar{\q}$.
This shows that $\dim(\mrm{H}^0(A_{\bar{\p}})) = 0$.
According to \cite[Proposition 4.8]{ShCM},
this implies that $A_{\bar{\p}}$ is local-Cohen–Macaulay.
Suppose now that 
\[
\opn{seq.depth}(\bar{\p},A) = n > 0,
\]
and assume by induction that for any local-Cohen–Macaulay DG-ring $B$ with constant amplitude,
and any $\bar{\q} \in \opn{Spec}(\mrm{H}^0(B))$ such that $\opn{seq.depth}(\bar{\q},B) < n$,
it holds that $B_{\bar{\q}}$ is local-Cohen–Macaulay.
If $\opn{ht}(\bar{\p}) = 0$ (which actually cannot happen),
the result follows as above, 
so we may assume that $\opn{ht}(\bar{\p}) > 0$.
Then $\bar{\p}$ is not a minimal prime of $\opn{Spec}(\mrm{H}^0(A))$,
and by Proposition \ref{prop:minimalPrimes},
this implies that $\bar{\p}$ is not contained in the union of the associated primes of $A$,
and also $\bar{\p}$ is not contained in the union of the prime ideals in $W^A_0(A)$.
By the prime avoidance lemma, we may find $\bar{x} \in \bar{\p}$
such that for all minimal primes $\bar{\q}$ of $\opn{Spec}(\mrm{H}^0(A))$,
it holds that $\bar{x} \notin \bar{\q}$. 
According to \cite[Proposition 5.13]{ShCM},
this implies that $\bar{x}$ is $A$-regular,
while by \cite[Proposition 5.16]{ShCM},
it holds that 
\[
\dim(\mrm{H}^0(A)/(\bar{x})) = \dim(\mrm{H}^0(A)) - 1.
\]
Let us denote by $B$ the DG-ring $K(A;\bar{x})$.
Since $A$ is local-Cohen–Macaulay, we deduce that
\[
\opn{seq.depth}(K(A;\bar{x})) = \opn{seq.depth}(A) - 1 = \dim(\mrm{H}^0(A))-1 = \dim(\mrm{H}^0(K(A;\bar{x})),
\]
which implies that $B$ is local-Cohen–Macaulay.
By \cite[Lemma 3.13]{ShKos}, we further know that $B$ has constant amplitude.
Let $\bar{\q}$ be the image of $\bar{\p}$ in $\mrm{H}^0(A/\bar{x})$.
Then by definition, we have that
\[
\opn{seq.depth}(\bar{\q},B) = \opn{seq.depth}(\bar{\p},A) - 1 = n-1.
\]
Hence, the induction hypothesis implies that $B_{\bar{\q}}$ is local-Cohen–Macaulay.
By definition, this implies that
\[
\opn{seq.depth}(\bar{\q},B_{\bar{\q}}) = \dim(\mrm{H}^0(B_{\bar{\q}})).
\]
According to \cite[Proposition 2.11]{ShKos}, 
there is a DG-ring isomorphism
\[
B_{\bar{\q}} \cong K(A_{\bar{\p}},\frac{(\bar{x})}{1}),
\]
where $\frac{(\bar{x})}{1}$ denotes the image of $\bar{x}$ in the localization $\mrm{H}^0(A_{\bar{\p}})$.
As $\bar{x}$ is $A$-regular, we know that $\bar{x}$ is $\mrm{H}^{\inf(A)}(A)$-regular,
and since $\bar{x} \in \bar{\p}$,
we deduce that $\frac{(\bar{x})}{1}$ is $\mrm{H}^{\inf(A)}(A)_{\bar{\p}}$-regular,
so by the constant amplitude assumption, we deduce that $\frac{(\bar{x})}{1}$ is $A_{\bar{\p}}$-regular.
Hence, we obtain that
\begin{gather*}
\opn{seq.depth}(\bar{\p},A_{\bar{\p}}) = \opn{seq.depth}\left(\bar{\p},K(A_{\bar{\p}},\frac{(\bar{x})}{1})\right) + 1 =\\ \opn{seq.depth}(\bar{\q},B_{\bar{\q}}) + 1 = \dim(\mrm{H}^0(B_{\bar{\q}})) + 1 = \dim(\mrm{H}^0(A_{\bar{\p}}))
\end{gather*}
where the last equality follows from Lemma \ref{lem:caten} and the fact that $\mrm{H}^0(B) = \mrm{H}^0(A)/\bar{x}$.
This shows that $A_{\bar{\p}}$ is local-Cohen–Macaulay, as claimed. 
\end{proof}

\section{Sequence-regular DG-rings}\label{sec:seq-reg}

In this section we arrive to the main object of study of this paper,
sequence-regular DG-rings. 
The next result was contained in the proof of \cite[Theorem 4.2]{ShKos}.
We state it as a separate result because of its importance in what follows.

\begin{prop}\label{prop:kosAmp}
Let $(A,\bar{\m})$ be a Cohen–Macaulay local DG-ring with constant amplitude,
let $\bar{x}_1,\dots,\bar{x}_n \in \bar{\m}$,
and denote by $\bar{I}$ the ideal in $\mrm{H}^0(A)$ generated by $\bar{x}_1,\dots,\bar{x}_n$.
Then there is an equality
\[
\amp\left(K(A;\bar{x}_1,\dots,\bar{x}_n)\right) = n - \dim(\mrm{H}^0(A)) + \dim(\mrm{H}^0(A)/\bar{I}) + \amp(A).
\]
\end{prop}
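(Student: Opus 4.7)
My plan is to argue by induction on $n$. The base case $n = 0$ is immediate: $K(A;\varnothing) = A$ and $\bar{I} = 0$, so $\dim(\mrm{H}^0(A)/\bar{I}) = \dim(\mrm{H}^0(A))$ and both sides reduce to $\amp(A)$.

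For the inductive step I would set $B' := K(A; \bar{x}_1, \dots, \bar{x}_{n-1})$, $\bar{I}' := (\bar{x}_1, \dots, \bar{x}_{n-1})$, $d'' := \dim(\mrm{H}^0(A)/\bar{I}')$, and $d := \dim(\mrm{H}^0(A))$. The inductive hypothesis gives $\amp(B') = (n-1) - d + d'' + \amp(A)$, so the target identity rewrites as $\amp(B) - \amp(B') = 1 + (d' - d'')$. Using the Koszul triangle $B' \xrightarrow{\bar{x}_n} B' \to B \to B'[1]$ and inspecting the induced long exact sequence on cohomology, one checks that $\amp(B) - \amp(B') \in \{0, 1\}$, with the value $0$ attained exactly when $\bar{x}_n$ acts injectively on $\mrm{H}^{\inf(B')}(B')$, i.e., when $\bar{x}_n$ is $B'$-regular. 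Meanwhile $d' - d'' \in \{-1, 0\}$ by Krull's Hauptidealsatz. Consequently the entire inductive step reduces to the equivalence: $\bar{x}_n$ is $B'$-regular if and only if $d' = d'' - 1$.

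The easy case is $\bar{x}_n \in \bar{I}'$: then $\bar{I} = \bar{I}'$ and $d' = d''$, while choosing the lift of $\bar{x}_n$ to $0 \in (B')^0$ identifies $B \simeq B' \oplus B'[1]$ and forces $\amp(B) = \amp(B') + 1$, matching the formula. The substantive case is $\bar{x}_n \notin \bar{I}'$, where by Krull the condition $d' = d'' - 1$ is equivalent to $\bar{x}_n$ avoiding every minimal prime of maximal dimension of $\mrm{H}^0(A)/\bar{I}'$, while $B'$-regularity of $\bar{x}_n$ means avoiding every associated prime of $\mrm{H}^{\inf(B')}(B')$. To see that these two conditions coincide I would invoke the Cohen--Macaulay and constant-amplitude hypotheses on $A$: Propositions \ref{prop:ass-primes} and \ref{prop:minimalPrimes} identify $\opn{Ass}(\mrm{H}^{\inf(A)}(A))$ with the set of height-zero primes of $\mrm{H}^0(A)$, all of them of maximal dimension $d$, so that $\mrm{H}^{\inf(A)}(A)$ is a maximal Cohen--Macaulay $\mrm{H}^0(A)$-module. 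Propagating this through the iterated Koszul construction yields $\opn{Ass}(\mrm{H}^{\inf(B')}(B')) = \{\bar{\q} \in \opn{Spec}(\mrm{H}^0(B')) : \dim(\mrm{H}^0(B')/\bar{\q}) = d''\}$, which is exactly the equivalence needed.

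The main obstacle is this propagation of structure through the Koszul iteration: the intermediate DG-ring $B'$ need not itself be Cohen--Macaulay, so one cannot appeal directly to \cite[Propositions 5.13 and 5.16]{ShCM} for $B'$. Instead the argument must track how the bottom Koszul cohomology inherits an MCM-type property over $\mrm{H}^0(B')$ while staying inside the ambient Cohen--Macaulay DG-ring $A$; this is the mechanism already carried out inside the proof of \cite[Theorem 4.2]{ShKos}.
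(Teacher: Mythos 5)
Your inductive framework is sound as far as it goes: the reduction of the step to the single equivalence ``$\bar{x}_n$ is $B'$-regular if and only if $\dim(\mrm{H}^0(A)/\bar{I}) = \dim(\mrm{H}^0(A)/\bar{I}')-1$'' is correct, and the two easy halves (the amplitude jump of at most one read off the Koszul triangle, and Krull's Hauptidealsatz) are handled properly. But there is a genuine gap exactly where you flag ``the main obstacle'': you never establish the identity
$\opn{Ass}(\mrm{H}^{\inf(B')}(B')) = \{\bar{\q} \mid \dim(\mrm{H}^0(B')/\bar{\q}) = \dim(\mrm{H}^0(B'))\}$
for the intermediate DG-ring $B' = K(A;\bar{x}_1,\dots,\bar{x}_{n-1})$, and this unmixedness statement is the entire mathematical content of the proposition. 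Gesturing at ``the mechanism already carried out inside the proof of \cite[Theorem 4.2]{ShKos}'' does not close it: you would need to actually extract and verify that mechanism for an arbitrary, possibly non-regular, sequence $\bar{x}_1,\dots,\bar{x}_{n-1}$. The clean way to obtain your identity would be to know that $B'$ is again Cohen--Macaulay with constant amplitude, so that Propositions \ref{prop:ass-primes} and \ref{prop:minimalPrimes} apply to it; but that is precisely the statement of \cite[Theorem 4.2]{ShKos}, of which the present proposition is explicitly a lemma extracted from the proof, so invoking it here is circular.

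The paper avoids the induction entirely. Since $\sup(K) = \sup(A) = 0$, the claim is a chain of three cited equalities: $\amp(K) = n - \opn{depth}(\bar{I},A)$ (Koszul depth-sensitivity, \cite[Proposition 3.17]{ShKos}), $\opn{depth}(\bar{I},A) = \opn{seq.depth}_A(\bar{I},A) + \inf(A)$ (\cite[Proposition 3.2]{ShKos}), and $\opn{seq.depth}_A(\bar{I},A) = \dim(\mrm{H}^0(A)) - \dim(\mrm{H}^0(A)/\bar{I})$ (\cite[Theorem 3.16]{ShKos}). The last of these is precisely the global, already-proved form of the equivalence your induction tries to re-derive one element at a time. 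Either supply the unmixedness of $\mrm{H}^{\inf(B')}(B')$ yourself, or take the short route through these three results.
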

\begin{proof}
Because $K(A;\bar{x}_1,\dots,\bar{x}_n)$ is a non-positive DG-ring, 
we know that
\[
\amp\left(K(A;\bar{x}_1,\dots,\bar{x}_n)\right) = -\inf\left(K(A;\bar{x}_1,\dots,\bar{x}_n)\right).
\]
By \cite[Proposition 3.17]{ShKos}, there is an equality
\[
\inf\left(K(A;\bar{x}_1,\dots,\bar{x}_n)\right) = \opn{depth}(\bar{I},A) - n.
\]
According to \cite[Proposition 3.2]{ShKos}, we know that
\[
\opn{depth}(\bar{I},A) = \opn{seq.depth}_A(\bar{I},A) + \inf(A).
\]
Finally, \cite[Theorem 3.16]{ShKos} says that
\[
\opn{seq.depth}_A(\bar{I},A) = \dim(\mrm{H}^0(A)) - \dim(\mrm{H}^0(A)/\bar{I}).
\]
Combining all these equalities together establishes the result.
\end{proof}

The following result is a generalization of \cite[Theorem 2.1.2(c)]{BH} to the DG setting.

\begin{prop}\label{prop:sopIsRegular}
Let $(A,\bar{\m})$ be a Cohen–Macaulay local DG-ring with constant amplitude,
and let $\bar{x}_1,\dots,\bar{x}_n\in \bar{\m}$ be elements of $\mrm{H}^0(A)$.
Then $\bar{x}_1,\dots,\bar{x}_n$ is an $A$-regular sequence if and only if 
\[
\dim(\mrm{H}^0(A)/(\bar{x}_1,\dots, \bar{x}_n)) = \dim(\mrm{H}^0(A)) - n.
\]
\end{prop}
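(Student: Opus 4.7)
The plan is to use Proposition \ref{prop:kosAmp} as the bridge between the Koszul amplitude and the dimension of the quotient. Since both $A$ and $K(A;\bar{x}_1,\dots,\bar{x}_n)$ are non-positive, their amplitudes coincide with $-\inf$, and Proposition \ref{prop:kosAmp} shows that the dimension identity $\dim(\mrm{H}^0(A)/\bar{I}) = \dim(\mrm{H}^0(A))-n$ is equivalent to the assertion that $\inf(K(A;\bar{x}_1,\dots,\bar{x}_n)) = \inf(A)$. The task thus reduces to identifying regular sequences with those Koszul complexes whose infimum does not drop.

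For the forward direction, I would argue by induction on $n$ using the defining triangle $A \xrightarrow{\bar{x}} A \to K(A;\bar{x}) \to A[1]$ for a single Koszul extension. If $\bar{x}$ is regular on a bounded-below DG-module $M$, the associated long exact sequence gives $\mrm{H}^{\inf(M)-1}(K(M;\bar{x})) = 0$ and $\mrm{H}^{\inf(M)}(K(M;\bar{x})) \cong \mrm{H}^{\inf(M)}(M)/\bar{x}$, which is nonzero by Nakayama, so $\inf(K(M;\bar{x})) = \inf(M)$. Iterating with $M$ successively equal to $K(A;\bar{x}_1,\dots,\bar{x}_{i-1})$ and using the standard identification $K(K(A;\bar{x}_1,\dots,\bar{x}_{i-1});\bar{x}_i) \cong K(A;\bar{x}_1,\dots,\bar{x}_i)$ together with the definition of a regular sequence yields $\inf(K(A;\bar{x}_1,\dots,\bar{x}_n)) = \inf(A)$, and Proposition \ref{prop:kosAmp} closes this direction.

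For the converse I would induct on $n$. The dimension identity combined with Krull's Hauptidealsatz applied to the successive quotients forces each partial quotient to drop dimension by exactly one; in particular $\dim(\mrm{H}^0(A)/(\bar{x}_1)) = \dim(\mrm{H}^0(A))-1$. To show $\bar{x}_1$ is $A$-regular, combine Proposition \ref{prop:minimalPrimes} (which identifies $\opn{Ass}(A)$ with the minimal primes of $\mrm{H}^0(A)$) with \cite[Theorem 3.8]{ShKos} and the Cohen--Macaulay hypothesis: every minimal prime $\bar{\q}$ must satisfy $\dim(\mrm{H}^0(A)/\bar{\q}) = \dim(\mrm{H}^0(A))$, so any $\bar{x}_1$ contained in such a $\bar{\q}$ would contradict the dimension drop. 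Thus $\bar{x}_1 \notin \bigcup \opn{Ass}(A)$, and $\bar{x}_1$ is $A$-regular by \cite[Proposition 5.13]{ShCM}.

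The main obstacle is then to verify that $B := K(A;\bar{x}_1)$ remains a Cohen--Macaulay DG-ring with constant amplitude, so that the inductive hypothesis applies to $\bar{x}_2,\dots,\bar{x}_n$ in $\mrm{H}^0(B) = \mrm{H}^0(A)/(\bar{x}_1)$; note that Theorem \ref{thm:lCMisCM} is not directly available since catenaryness of $\mrm{H}^0(A)$ is not assumed. Constant amplitude of $B$ follows from \cite[Lemma 3.13]{ShKos}. For the Cohen--Macaulay property I would argue prime-by-prime: for any $\bar{\q} \in \opn{Spec}(\mrm{H}^0(B))$ corresponding to $\bar{\p} \in \opn{Spec}(\mrm{H}^0(A))$ with $\bar{x}_1 \in \bar{\p}$, the identification $B_{\bar{\q}} \cong K(A_{\bar{\p}};\bar{x}_1/1)$ from \cite[Proposition 2.11]{ShKos} applies; since $A_{\bar{\p}}$ is local-Cohen--Macaulay with constant amplitude and $\bar{x}_1/1$ avoids all minimal primes of $\mrm{H}^0(A_{\bar{\p}})$ (again via Proposition \ref{prop:minimalPrimes} and \cite[Proposition 5.13]{ShCM}), an $A_{\bar{\p}}$-regular sequence of length $\dim(\mrm{H}^0(A_{\bar{\p}}))-1$ in $B_{\bar{\q}}$ is obtained by extension, giving $\opn{seq.depth}(B_{\bar{\q}}) = \dim(\mrm{H}^0(B_{\bar{\q}}))$. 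Once $B$ is established as Cohen--Macaulay with constant amplitude, the induction hypothesis yields that $\bar{x}_2,\dots,\bar{x}_n$ is $B$-regular, which combined with Step~1 gives, by definition, that $\bar{x}_1,\dots,\bar{x}_n$ is $A$-regular.
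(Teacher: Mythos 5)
Your proof is correct, and its first half is exactly the paper's argument: Proposition \ref{prop:kosAmp} converts the dimension identity into the statement $\amp(K(A;\bar{x}_1,\dots,\bar{x}_n)) = \amp(A)$, equivalently $\inf(K(A;\bar{x}_1,\dots,\bar{x}_n)) = \inf(A)$. Where you diverge is in relating this to $A$-regularity of the sequence. The paper simply quotes \cite[Lemma 2.13(2)]{Mi2}, which is precisely the biconditional ``regular sequence $\Leftrightarrow$ $\inf$ does not drop,'' and is done in one line. You re-prove the forward implication via the cone long exact sequence plus Nakayama, which is fine; but for the converse you switch to a much heavier induction: you extract regularity of $\bar{x}_1$ from dimension theory and Proposition \ref{prop:minimalPrimes}, and then must verify that $K(A;\bar{x}_1)$ is again Cohen--Macaulay with constant amplitude so that the inductive hypothesis applies --- in effect redoing a one-step version of Theorem \ref{thm:lCMisCM} and of \cite[Theorem 4.2]{ShKos}. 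This does work (your prime-by-prime argument legitimately sidesteps the catenary hypothesis of Theorem \ref{thm:lCMisCM}, because you may invoke local-Cohen--Macaulayness at every localization, which via Proposition \ref{prop:minimalPrimes} and \cite[Theorem 3.8]{ShKos} forces all minimal primes of $\mrm{H}^0(A_{\bar{\p}})$ to have full coheight), but it is unnecessary: your own long-exact-sequence computation already shows that each Koszul step lowers $\inf$ by exactly $0$ or $1$, and by exactly $0$ precisely when the element is regular on the previous stage, since the bottom cohomology of $K(M;\bar{x})$ contains the nonzero module $\mrm{H}^{\inf(M)}(M)/\bar{x}$. Iterating gives both directions of the equivalence with no Cohen--Macaulay input whatsoever; those hypotheses are needed only to feed Proposition \ref{prop:kosAmp}. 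So the converse half of your argument, while valid, spends considerable effort proving something your forward half already contains.
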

\begin{proof}
By \cite[Lemma 2.13(2)]{Mi2}, 
it holds that $\bar{x}_1,\dots,\bar{x}_n$ is an $A$-regular sequence if and only if
\[
\inf(A) = \inf(K(A;\bar{x}_1,\dots,\bar{x}_n)),
\]
or equivalently (since both of these DG-rings are non-positive),
\[
\amp(A) = \amp(K(A;\bar{x}_1,\dots,\bar{x}_n)).
\]
To compute the latter, we invoke Proposition \ref{prop:kosAmp}.
Denoting bu $\bar{I}$ the ideal in $\mrm{H}^0(A)$ generated by $\bar{x}_1,\dots,\bar{x}_n$,
there is an equality
\[
\amp(K(A;\bar{x}_1,\dots,\bar{x}_n) = n - \dim(\mrm{H}^0(A)) + \dim(\mrm{H}^0(A)/\bar{I}) + \amp(A).
\]
It follows that $\bar{x}_1,\dots,\bar{x}_n$ is an $A$-regular sequence if and only if 
\[
\dim(\mrm{H}^0(A)/\bar{I}) = \dim(\mrm{H}^0(A)) - n.
\]
\end{proof}

As a corollary of the above proposition, we obtain the following result,
which we call the double-Cohen–Macaulay theorem.

\begin{thm}\label{thm:DoubleCM}
Let $(A,\bar{\m})$ be a Cohen–Macaulay local DG-ring with constant amplitude,
and suppose that the local ring $\mrm{H}^0(A)$ is also Cohen–Macaulay.
Given a finite sequence of elements $\bar{x}_1,\dots,\bar{x}_n \in \bar{\m}$,
it holds that $\bar{x}_1,\dots,\bar{x}_n$ is an $A$-regular sequence if and only if
$\bar{x}_1,\dots,\bar{x}_n$ is an $\mrm{H}^0(A)$-regular sequence. 
\end{thm}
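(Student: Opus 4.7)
The plan is to observe that Proposition \ref{prop:sopIsRegular} already reduces the notion of an $A$-regular sequence to a purely dimension-theoretic condition on $\mrm{H}^0(A)$, while on the other side the classical theorem \cite[Theorem 2.1.2(c)]{BH} reduces the notion of an $\mrm{H}^0(A)$-regular sequence to the very same condition. Once this is observed, the theorem is essentially immediate.

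More precisely, set $\bar{I} = (\bar{x}_1,\dots,\bar{x}_n) \subseteq \mrm{H}^0(A)$. First, I would apply Proposition \ref{prop:sopIsRegular} to $(A,\bar{\m})$ (which is legal since $A$ is Cohen–Macaulay with constant amplitude by assumption) to conclude that $\bar{x}_1,\dots,\bar{x}_n$ is an $A$-regular sequence if and only if
\[
\dim\bigl(\mrm{H}^0(A)/\bar{I}\bigr) = \dim\bigl(\mrm{H}^0(A)\bigr) - n.
\]
Second, I would invoke the classical Bruns–Herzog criterion \cite[Theorem 2.1.2(c)]{BH} applied to the Cohen–Macaulay local ring $\mrm{H}^0(A)$: a sequence of elements of its maximal ideal is a regular sequence if and only if it can be extended to a system of parameters, equivalently if and only if the same dimension equality above holds. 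Comparing the two characterizations yields the claimed equivalence.

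There is no real obstacle here, since the content of the theorem lies entirely in Proposition \ref{prop:sopIsRegular}, which was the nontrivial step. The only thing worth flagging is that both characterizations pass through the same dimension equation, so one should make sure the hypotheses of each cited result are in place: the constant amplitude together with the DG Cohen–Macaulay condition give the forward direction via Proposition \ref{prop:sopIsRegular}, while the Cohen–Macaulayness of $\mrm{H}^0(A)$ is precisely what allows the Bruns–Herzog characterization to be applied on the ordinary-ring side. Both hypotheses are stated in the theorem, so the proof is essentially a one-line combination, and I would present it in roughly this form.
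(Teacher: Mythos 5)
Your proposal is correct and follows exactly the paper's own argument: Proposition \ref{prop:sopIsRegular} translates $A$-regularity into the dimension equality $\dim(\mrm{H}^0(A)/\bar{I}) = \dim(\mrm{H}^0(A)) - n$, and \cite[Theorem 2.1.2(c)]{BH} translates $\mrm{H}^0(A)$-regularity into the same equality. Nothing further is needed.
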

\begin{proof}
Given a finite sequence of elements $\bar{x}_1,\dots,\bar{x}_n \in \bar{\m}$,
we know by Proposition \ref{prop:sopIsRegular} that $\bar{x}_1,\dots,\bar{x}_n$ is an $A$-regular sequence,
if and only if
\[
\dim(\mrm{H}^0(A)/(\bar{x}_1,\dots, \bar{x}_n)) = \dim(\mrm{H}^0(A)) - n.
\]
By \cite[Theorem 2.1.2(c)]{BH},
this is the case if and only if $\bar{x}_1,\dots,\bar{x}_n$ is an $\mrm{H}^0(A)$-regular sequence.
\end{proof}

Here is the main result of this section.

\begin{thm}\label{thm:main}
Let $(A,\bar{\m})$ be a commutative noetherian local DG-ring with bounded cohomology.
Then the following are equivalent:
\begin{enumerate}
\item The maximal ideal $\bar{\m}$ is generated by an $A$-regular sequence.
\item The DG-ring $A$ is a Cohen–Macaulay DG-ring and $\mrm{H}^0(A)$ is a regular local ring.
\end{enumerate}
\end{thm}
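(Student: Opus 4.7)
The plan is to use the preparatory results of this section as the two main bridges: the double-Cohen--Macaulay theorem (Theorem \ref{thm:DoubleCM}) transfers regular sequences between $A$ and $\mrm{H}^0(A)$, while Theorem \ref{thm:lCMisCM} upgrades the local-Cohen--Macaulay property to full Cohen--Macaulay once constant amplitude and catenarity of $\mrm{H}^0(A)$ are in place. The key recurring observation is that regularity of $\mrm{H}^0(A)$ forces $\mrm{H}^0(A)$ to be an integral domain, and hence, via Proposition \ref{prop:const}, forces $A$ to have constant amplitude.

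For $(2) \Rightarrow (1)$, assume $A$ is Cohen--Macaulay and $\mrm{H}^0(A)$ is regular local of dimension $d$. Since $\mrm{H}^0(A)$ is a domain, it has a unique minimal prime, so Proposition \ref{prop:const} gives that $A$ has constant amplitude. Choose a regular system of parameters $\bar{x}_1,\dots,\bar{x}_d$ for $\mrm{H}^0(A)$; this generates $\bar{\m}$ and is an $\mrm{H}^0(A)$-regular sequence by classical commutative algebra. Since $\mrm{H}^0(A)$ is regular (hence Cohen--Macaulay) and $A$ is Cohen--Macaulay with constant amplitude, Theorem \ref{thm:DoubleCM} promotes this to an $A$-regular sequence generating $\bar{\m}$.

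For $(1) \Rightarrow (2)$, suppose $\bar{\m}$ is generated by an $A$-regular sequence $\bar{x}_1,\dots,\bar{x}_n$. On one hand, the existence of such a sequence gives $\opn{seq.depth}_A(A) \geq n$, and \cite[Corollary 5.5]{ShCM} supplies the upper bound $\opn{seq.depth}_A(A) \leq \dim(\mrm{H}^0(A))$. On the other hand, Krull's Hauptidealsatz applied to $\bar{\m} = (\bar{x}_1,\dots,\bar{x}_n)$ forces $\dim(\mrm{H}^0(A)) \leq n$. Combining these pinches yields $n = \dim(\mrm{H}^0(A))$, so $A$ is local-Cohen--Macaulay, and because $\bar{\m}$ is generated by exactly $\dim(\mrm{H}^0(A))$ elements, $\mrm{H}^0(A)$ is a regular local ring. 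The integral-domain property of $\mrm{H}^0(A)$ again yields constant amplitude via Proposition \ref{prop:const}, and since regular local rings are Cohen--Macaulay and hence catenary, Theorem \ref{thm:lCMisCM} upgrades the local-Cohen--Macaulay property to full Cohen--Macaulay.

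The main subtlety the proof has to navigate is obtaining constant amplitude before any of the section's key theorems can be applied; this is precisely where the fact that regular local rings are integral domains becomes essential, and it is also the reason both directions pass through Proposition \ref{prop:const}. Given the tools already developed, the remainder of the argument is a short pinching of sequential depth between its upper bound and the Krull dimension of $\mrm{H}^0(A)$, coupled with the transfer principle for regular sequences between $A$ and $\mrm{H}^0(A)$.
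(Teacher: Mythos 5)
Your proposal is correct and follows essentially the same route as the paper: for $(1)\Rightarrow(2)$ the same pinching of $\opn{seq.depth}_A(A)$ between the number of generators of $\bar{\m}$, Krull's Hauptidealsatz, and the bound from \cite[Corollary 5.5]{ShCM}, followed by Proposition \ref{prop:const} and Theorem \ref{thm:lCMisCM}; for $(2)\Rightarrow(1)$ the same appeal to the double-Cohen--Macaulay theorem applied to a regular system of parameters. If anything, you are slightly more careful than the paper in explicitly verifying the constant-amplitude hypothesis of Theorem \ref{thm:DoubleCM} before invoking it in the converse direction.
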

\begin{proof}
Suppose $\bar{\m}$ is generated by an $A$-regular sequence.
Let $n$ be the minimal number of generators of $\bar{\m}$.
Then by definition, $n \le \opn{seq.depth}_A(A)$,
while by \cite[Corollary 5.5]{ShCM}, 
we know that $\opn{seq.depth}_A(A) \le \dim(\mrm{H}^0(A))$.
On the other hand, by Krull's Hauptidealsatz, 
we know that $\dim(\mrm{H}^0(A)) \le n$. 
It follows that $n = \dim(\mrm{H}^0(A))$,
so that $\mrm{H}^0(A)$ is a regular local ring,
and that moreover $\opn{seq.depth}_A(A) = \dim(\mrm{H}^0(A))$,
so that $A$ is local-Cohen–Macaulay.
Since $\mrm{H}^0(A)$ is a regular local ring, 
in particular it is an integral domain,
so by Proposition \ref{prop:const} we deduce that $A$ has constant amplitude.
Being regular, $\mrm{H}^0(A)$ is also catenary.
Hence, by Theorem \ref{thm:lCMisCM} we deduce that $A$ is Cohen–Macaulay.
Conversely, suppose that $A$ is a Cohen–Macaulay DG-ring such that $\mrm{H}^0(A)$ is a regular local ring.
Let $\bar{x}_1,\dots,\bar{x}_n$ be a regular of system of parameters of $\mrm{H}^0(A)$.
In particular, it generates $\bar{\m}$,
and it follows from Theorem \ref{thm:DoubleCM} that it is an $A$-regular sequence. 
\end{proof}

\begin{rem}\label{rem:both-needed}
A reader unacquainted with regular sequences in the DG context might wonder if the Cohen–Macaulay condition is necessary in the above result, or if it simply follows from the condition that $\mrm{H}^0(A)$ is regular.
It is in fact necessary, and without it, this result is far from true.
For instance, if $(A,\m)$ is a regular local ring which is not a field,
and if $\k = A/\m$ is its residue field,
we may consider the trivial extension DG-ring $B = A \skewtimes \k[2]$,
as defined in \cite[Definition 1.2]{JorDual}.
Then by definition $\mrm{H}^0(B) = A$ is a regular local ring,
but since $\mrm{H}^{\inf(B)}(B) = \k$,
it follows that $\opn{seq.depth}_B(B) = 0$.
Thus, $B$ is as far as possible from being Cohen–Macaulay.
Conversely, there are many examples of Cohen–Macaulay DG-rings $A$
such that $\mrm{H}^0(A)$ is not regular.
Thus, there is no redundancy in the above theorem.
\end{rem}

We now make this result into a definition.
\begin{dfn}
Let $(A,\bar{\m})$ be a commutative noetherian local DG-ring with bounded cohomology.
We say that $A$ is sequence-regular if the maximal ideal $\bar{\m}$ is generated by an $A$-regular sequence.
\end{dfn}

In a regular local ring, any minimal generating sequence of the maximal ideal is a regular system of parameters.
Similarly, we have:

\begin{cor}\label{cor:min}
Let $(A,\bar{\m})$ be a commutative noetherian local DG-ring with bounded cohomology.
Then the following are equivalent:
\begin{enumerate}
\item The DG-ring $A$ is sequence-regular, i.e, some set of generators of $\bar{\m}$ is $A$-regular.
\item Any minimal generating sequence of the maximal ideal $\bar{\m}$ is $A$-regular.
\end{enumerate}
\end{cor}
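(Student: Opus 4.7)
The implication (2) $\Rightarrow$ (1) is immediate, since any minimal generating sequence of $\bar{\m}$ is in particular a generating sequence of $\bar{\m}$. The work is in (1) $\Rightarrow$ (2).

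The plan for (1) $\Rightarrow$ (2) is to reduce the statement to the classical fact that in a regular local ring every minimal system of generators of the maximal ideal is a regular sequence, via the double-Cohen–Macaulay bridge. First I would apply Theorem \ref{thm:main} to the hypothesis of sequence-regularity: this tells me that $A$ is a Cohen–Macaulay DG-ring and that $\mrm{H}^0(A)$ is a regular local ring. Since a regular local ring is an integral domain, $\mrm{H}^0(A)$ has a unique minimal prime ideal; together with the local-Cohen–Macaulay property this puts us in the hypothesis of Proposition \ref{prop:const}, so $A$ has constant amplitude.

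Now let $\bar{x}_1,\dots,\bar{x}_n \in \bar{\m}$ be any minimal generating sequence of $\bar{\m}$. Since $\mrm{H}^0(A)$ is regular local, the minimal number of generators of $\bar{\m}$ equals $\dim(\mrm{H}^0(A))$, so $n = \dim(\mrm{H}^0(A))$, which means that $\bar{x}_1,\dots,\bar{x}_n$ is a regular system of parameters of the regular local ring $\mrm{H}^0(A)$. By the standard classical result on regular local rings, such a system is an $\mrm{H}^0(A)$-regular sequence. A regular local ring is Cohen–Macaulay, so all the hypotheses of Theorem \ref{thm:DoubleCM} are met: $A$ is Cohen–Macaulay with constant amplitude, and $\mrm{H}^0(A)$ is Cohen–Macaulay. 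Applying Theorem \ref{thm:DoubleCM} then transfers the regularity of the sequence from $\mrm{H}^0(A)$ to $A$, concluding that $\bar{x}_1,\dots,\bar{x}_n$ is $A$-regular.

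There is no real obstacle here; the only step that requires attention is the verification of constant amplitude, which is needed to invoke Theorem \ref{thm:DoubleCM} but follows painlessly from Proposition \ref{prop:const} once we notice that $\mrm{H}^0(A)$ being regular forces it to be a domain.
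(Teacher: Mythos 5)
Your proof is correct and follows essentially the same route as the paper: apply Theorem \ref{thm:main} to get that $A$ is Cohen–Macaulay and $\mrm{H}^0(A)$ is regular local, note that any minimal generating sequence of $\bar{\m}$ is then an $\mrm{H}^0(A)$-regular system of parameters, and transfer regularity back to $A$ via Theorem \ref{thm:DoubleCM}. The only difference is that you explicitly verify the constant-amplitude hypothesis of Theorem \ref{thm:DoubleCM} via Proposition \ref{prop:const}, a detail the paper's proof leaves implicit; this is a worthwhile addition rather than a divergence.
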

\begin{proof}
It is clear that the second statement implies the first.
For the converse, by Theorem \ref{thm:main},
we know that $A$ is Cohen–Macaulay and that $\mrm{H}^0(A)$ is a regular local ring.
Hence, any minimal generating sequence of the maximal ideal $\bar{\m}$ is $\mrm{H}^0(A)$-regular,
so by the  double-Cohen–Macaulay theorem (Theorem \ref{thm:DoubleCM}),
we deduce that it is also $A$-regular.
\end{proof}

In view of the above, given a sequence-regular DG-ring $(A,\bar{\m})$,
we say that a sequence of elements $\bar{a}_1,\dots,\bar{a}_n \in \bar{\m}$ is a regular system of parameters of $A$
if it is a minimal generating sequence of $\bar{\m}$.
In that case, the above result implies that it is necessarily an $A$-regular sequence.

\section{Behavior of sequence-regularity with respect to standard operations}\label{sec:operations}

In this section we investigate the behavior of the sequence-regularity property with respect to standard operations in derived commutative algebra. We begin by discussing behavior with respect to localization. 
Recall that the Auslander–Buchsbaum-Serre theorem \cite{AusBuch,Serre} states that the localization of a regular local ring is again regular.
We now use Theorem \ref{thm:main} to obtain a DG version of it.

\begin{cor}\label{cor:Serre}
Let $(A,\bar{\m})$ be a commutative noetherian local DG-ring with bounded cohomology which is sequence-regular.
Then for any $\bar{\p} \in \opn{Spec}(\mrm{H}^0(A))$,
the DG-ring $A_{\bar{\p}}$ is sequence-regular.
\end{cor}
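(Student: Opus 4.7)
The plan is to reduce the statement to Theorem \ref{thm:main} applied twice: once to unpack the hypothesis, and once to repackage the conclusion. By Theorem \ref{thm:main}, saying that $(A,\bar{\m})$ is sequence-regular is the same as saying that $A$ is a Cohen–Macaulay DG-ring and that $\mrm{H}^0(A)$ is a regular local ring. So for each prime $\bar{\p} \in \opn{Spec}(\mrm{H}^0(A))$, it suffices to check that $A_{\bar{\p}}$ is again a Cohen–Macaulay DG-ring and that $\mrm{H}^0(A_{\bar{\p}})$ is again a regular local ring; Theorem \ref{thm:main} will then turn these two facts back into sequence-regularity.

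For the second property, I would simply invoke the classical Auslander–Buchsbaum–Serre theorem. Since $\mrm{H}^0(A_{\bar{\p}}) = \mrm{H}^0(A)_{\bar{\p}}$ (as recalled in Section \ref{sec:loc}), and $\mrm{H}^0(A)$ is a regular local ring, the localization $\mrm{H}^0(A)_{\bar{\p}}$ is a regular local ring.

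For the Cohen–Macaulay property, I would use the very definition of a Cohen–Macaulay DG-ring from Section \ref{sec:prem}: a DG-ring is Cohen–Macaulay exactly when all of its localizations are local-Cohen–Macaulay. Primes of $\mrm{H}^0(A_{\bar{\p}}) = \mrm{H}^0(A)_{\bar{\p}}$ correspond bijectively to primes $\bar{\q} \in \opn{Spec}(\mrm{H}^0(A))$ with $\bar{\q} \subseteq \bar{\p}$, and for any such $\bar{\q}$ there is a canonical isomorphism $(A_{\bar{\p}})_{\bar{\q}} \cong A_{\bar{\q}}$ because the DG-localization $A \mapsto A \otimes_{A^0} A^0_\q$ is transitive in the prime. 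Since $A$ is Cohen–Macaulay, each $A_{\bar{\q}}$ is local-Cohen–Macaulay, so every localization of $A_{\bar{\p}}$ is local-Cohen–Macaulay; hence $A_{\bar{\p}}$ is Cohen–Macaulay.

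There is no real obstacle here; the entire content has already been packed into Theorem \ref{thm:main}. The only thing to be a little careful about is to make sure that the sentence on Cohen–Macaulay stability under localization is read off the definition and not confused with the stability of the local-Cohen–Macaulay property, which (as emphasized in the preliminaries) can fail. Since Cohen–Macaulay was defined precisely to make localization work, this is routine.
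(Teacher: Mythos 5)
Your proposal is correct and follows the same route as the paper: apply Theorem \ref{thm:main} to get that $A$ is Cohen--Macaulay with $\mrm{H}^0(A)$ regular, invoke classical Auslander--Buchsbaum--Serre for $\mrm{H}^0(A)_{\bar{\p}}$, observe that the Cohen--Macaulay property localizes (by definition, via transitivity of localization), and apply Theorem \ref{thm:main} again. The paper states the localization step for Cohen--Macaulay DG-rings without spelling out the transitivity argument, but your elaboration is exactly the intended justification.
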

\begin{proof}
By Theorem \ref{thm:main}, $A$ is a Cohen–Macaulay DG-ring,
and $\mrm{H}^0(A)$ is a regular local ring.
By the classical theorem of Auslander–Buchsbaum-Serre, we know that $\mrm{H}^0(A_{\bar{\p}}) = \mrm{H}^0(A)_{\bar{\p}}$ is also a regular local ring. 
Since $A$ is a Cohen–Macaulay DG-ring, we know that $A_{\bar{\p}}$ is also Cohen–Macaulay.
Applying Theorem \ref{thm:main} again, we deduce that $A_{\bar{\p}}$ is sequence-regular.
\end{proof}

In view of this result, we may make a global definition:
\begin{dfn}
Let $A$ be a commutative noetherian DG-ring with bounded cohomology.
We say that $A$ is sequence-regular if for any $\bar{\p} \in \opn{Spec}(\mrm{H}^0(A))$,
the local DG-ring $A_{\bar{\p}}$ is sequence-regular.
\end{dfn}

Using this definition, Corollary \ref{cor:Serre} may be stated as follows:
\begin{cor}
Let $A$ be a commutative noetherian DG-ring with bounded cohomology.
Then $A$ is sequence-regular if and only if for all maximal ideals $\bar{\m} \in \opn{Spec}(\mrm{H}^0(A))$,
the local DG-ring $A_{\bar{\m}}$ is sequence-regular.
\end{cor}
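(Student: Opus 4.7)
The forward implication is immediate from the definition: if $A$ is sequence-regular, then by definition $A_{\bar{\p}}$ is sequence-regular for every $\bar{\p} \in \opn{Spec}(\mrm{H}^0(A))$, and maximal ideals are a special case.

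For the nontrivial direction, I would start with an arbitrary prime $\bar{\p} \in \opn{Spec}(\mrm{H}^0(A))$ and reduce to the hypothesis by choosing a maximal ideal $\bar{\m} \subseteq \mrm{H}^0(A)$ containing $\bar{\p}$ (which exists since $\mrm{H}^0(A)$ is noetherian). By assumption $(A_{\bar{\m}}, \bar{\m}_{\bar{\m}})$ is a sequence-regular local DG-ring. The prime $\bar{\p}$ determines a prime $\bar{\p}' \in \opn{Spec}(\mrm{H}^0(A_{\bar{\m}})) = \opn{Spec}(\mrm{H}^0(A)_{\bar{\m}})$, so Corollary \ref{cor:Serre} (the already-proved local version of Auslander--Buchsbaum--Serre in this setting) applies to $A_{\bar{\m}}$ and yields that $(A_{\bar{\m}})_{\bar{\p}'}$ is sequence-regular.

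The remaining step is to identify $(A_{\bar{\m}})_{\bar{\p}'}$ with $A_{\bar{\p}}$. Unwinding the definition from Section \ref{sec:loc}, we have $A_{\bar{\m}} = A \otimes_{A^0} A^0_{\m}$ where $\m = (\pi^0_A)^{-1}(\bar{\m})$, and similarly $A_{\bar{\p}} = A \otimes_{A^0} A^0_{\p}$ for $\p = (\pi^0_A)^{-1}(\bar{\p})$. Since $\bar{\p} \subseteq \bar{\m}$ implies $\p \subseteq \m$, classical transitivity of localization for the ordinary ring $A^0$ gives $(A^0_{\m})_{\p_{\m}} \cong A^0_{\p}$, and tensoring with $A$ over $A^0$ yields $(A_{\bar{\m}})_{\bar{\p}'} \cong A_{\bar{\p}}$ as DG-rings. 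Therefore $A_{\bar{\p}}$ is sequence-regular, completing the proof.

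There is no real obstacle here; the statement is essentially a formal consequence of Corollary \ref{cor:Serre} together with transitivity of localization. The only point worth spelling out carefully is the DG-level identification $(A_{\bar{\m}})_{\bar{\p}'} \cong A_{\bar{\p}}$, which reduces at once to the analogous fact for the underlying ring $A^0$.
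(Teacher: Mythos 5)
Your proof is correct and is exactly the argument the paper intends: the corollary is stated as an immediate reformulation of Corollary \ref{cor:Serre}, obtained by passing from an arbitrary prime to a maximal ideal containing it and using transitivity of localization. The only cosmetic remark is that the existence of a maximal ideal containing $\bar{\p}$ needs no noetherian hypothesis; otherwise the DG-level identification $(A_{\bar{\m}})_{\bar{\p}'} \cong A_{\bar{\p}}$ is carried out correctly.
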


As a corollary of the above, we obtain the following global version of Theorem \ref{thm:main}:
\begin{cor}\label{cor:main-global}
Let $A$ be a commutative noetherian DG-ring with bounded cohomology.
Then the following are equivalent:
\begin{enumerate}
\item The DG-ring $A$ is sequence-regular.
\item The DG-ring $A$ is Cohen–Macaulay and the ring $\mrm{H}^0(A)$ is a regular ring.
\end{enumerate}
\end{cor}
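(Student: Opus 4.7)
The plan is to reduce the global statement to the local Theorem \ref{thm:main} by unwinding the definitions of the three global notions involved: sequence-regular DG-ring, Cohen–Macaulay DG-ring, and regular ring. All three are defined (or can be characterized) by passage to localizations at primes of $\mrm{H}^0(A)$, so I would argue prime by prime.

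For the implication $(1) \Rightarrow (2)$, I would fix $\bar{\p} \in \opn{Spec}(\mrm{H}^0(A))$ and observe that by the global definition, $A_{\bar{\p}}$ is sequence-regular as a local DG-ring. Theorem \ref{thm:main} then says $A_{\bar{\p}}$ is a Cohen–Macaulay DG-ring and $\mrm{H}^0(A_{\bar{\p}}) = \mrm{H}^0(A)_{\bar{\p}}$ is regular local. Since being Cohen–Macaulay at each prime of $A$ is exactly what being globally Cohen–Macaulay means, and since regularity of $\mrm{H}^0(A)$ means regularity of all its localizations, both conditions in (2) follow.

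For the converse $(2) \Rightarrow (1)$, fix $\bar{\p} \in \opn{Spec}(\mrm{H}^0(A))$. The hypothesis that $\mrm{H}^0(A)$ is regular gives that $\mrm{H}^0(A_{\bar{\p}}) = \mrm{H}^0(A)_{\bar{\p}}$ is a regular local ring. For the Cohen–Macaulay condition on $A_{\bar{\p}}$, I would use the fact that the prime ideals of $\mrm{H}^0(A_{\bar{\p}})$ correspond bijectively to primes $\bar{\q} \subseteq \bar{\p}$ of $\mrm{H}^0(A)$, and that in this correspondence $(A_{\bar{\p}})_{\bar{\q}} \cong A_{\bar{\q}}$; hence every localization of $A_{\bar{\p}}$ is local-Cohen–Macaulay because $A$ is globally Cohen–Macaulay, which shows $A_{\bar{\p}}$ is itself Cohen–Macaulay. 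Now Theorem \ref{thm:main} applies and yields that $A_{\bar{\p}}$ is sequence-regular. Since $\bar{\p}$ was arbitrary, $A$ is sequence-regular in the global sense.

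No step here looks like a genuine obstacle: the substantive content is entirely in Theorem \ref{thm:main}, and what remains is only the verification that the defining conditions of the global notions all restrict well to localizations. The only small point worth stating carefully is the transitivity of localization in DG form, $(A_{\bar{\p}})_{\bar{\q}} \cong A_{\bar{\q}}$ for $\bar{\q} \subseteq \bar{\p}$, which is what makes the global-Cohen–Macaulay hypothesis on $A$ descend to $A_{\bar{\p}}$.
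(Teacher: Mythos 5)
Your proposal is correct and follows exactly the paper's route: the paper's own proof is the one-line observation that sequence-regularity, the Cohen--Macaulay property, and regularity are all local properties, so the statement reduces prime by prime to Theorem \ref{thm:main}. Your elaboration of the transitivity of localization $(A_{\bar{\p}})_{\bar{\q}} \cong A_{\bar{\q}}$ just makes explicit the detail the paper leaves implicit.
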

\begin{proof}
This follows immediately from Theorem \ref{thm:main},
and the fact that the notions of being a sequence-regular DG-ring, 
being a Cohen–Macaulay DG-ring, and being a regular ring are all local properties.
\end{proof}

Next, we obtain a DG version of the fact that a local ring is regular if and only if its adic completion is.
To do this, we use the derived adic completion functor, introduced in \cite{ShComp}.

\begin{cor}\label{cor:compl}
Let $(A,\bar{\m})$ be a commutative noetherian local DG-ring with bounded cohomology.
Then $A$ is sequence-regular if and only if the derived $\bar{\m}$-adic completion $\mrm{L}\Lambda(A,\bar{\m})$ is sequence-regular.
\end{cor}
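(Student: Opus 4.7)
The strategy is to apply Theorem \ref{thm:main} to reduce the assertion to showing that each of the two conditions ``$A$ is Cohen–Macaulay'' and ``$\mrm{H}^0(A)$ is a regular local ring'' is preserved in passing to and from the derived completion. From \cite{ShComp} one knows that $\mrm{L}\Lambda(A,\bar{\m})$ is itself a noetherian local DG-ring with bounded cohomology, whose $\mrm{H}^0$ is the classical $\bar{\m}$-adic completion $\what{\mrm{H}^0(A)}$ and whose maximal ideal is $\bar{\m}\what{\mrm{H}^0(A)}$. The regularity transfer is then immediate from the classical fact that $\mrm{H}^0(A)$ is regular if and only if $\what{\mrm{H}^0(A)}$ is.

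For the Cohen–Macaulay part I would work directly with regular systems of parameters. Assume first that $A$ is sequence-regular, with regular system of parameters $\bar{x}_1,\dots,\bar{x}_n \in \bar{\m}$. Their images in $\what{\mrm{H}^0(A)}$ generate the maximal ideal of $\mrm{L}\Lambda(A,\bar{\m})$, and the claim is that they form an $\mrm{L}\Lambda(A,\bar{\m})$-regular sequence. The key observation is that because the $\bar{x}_i$ generate $\bar{\m}$, each cohomology module $\mrm{H}^i(K(A;\bar{x}_1,\dots,\bar{x}_n))$ is annihilated by $\bar{\m}$ and finitely generated, hence is a finite-dimensional vector space over the residue field $\kappa = \mrm{H}^0(A)/\bar{\m}$; in particular it is already $\bar{\m}$-adically complete. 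Consequently the base-change formula for derived completion of a Koszul complex should yield $K(\mrm{L}\Lambda(A,\bar{\m});\bar{x}_1,\dots,\bar{x}_n) \simeq K(A;\bar{x}_1,\dots,\bar{x}_n)$, so the invariant $\inf$ is preserved, and the regularity criterion of \cite[Lemma 2.13(2)]{Mi2} yields that $\bar{x}_1,\dots,\bar{x}_n$ is $\mrm{L}\Lambda(A,\bar{\m})$-regular. Hence $\mrm{L}\Lambda(A,\bar{\m})$ is sequence-regular.

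For the converse I would take a regular system of parameters $\bar{y}_1,\dots,\bar{y}_n$ of $\mrm{L}\Lambda(A,\bar{\m})$ and use the faithful flatness of $\mrm{H}^0(A)\to \what{\mrm{H}^0(A)}$ together with the isomorphism $\bar{\m}\what{\mrm{H}^0(A)}/\bar{\m}^2\what{\mrm{H}^0(A)} \cong \bar{\m}/\bar{\m}^2$ to lift them to a sequence $\bar{x}_1,\dots,\bar{x}_n \in \bar{\m}$ which is again a minimal generating set of $\bar{\m}$. Applying the same Koszul comparison shows these are still $\mrm{L}\Lambda(A,\bar{\m})$-regular, and running the regularity criterion in the other direction yields that they are $A$-regular, so $A$ is sequence-regular. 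The main obstacle will be the rigorous verification of the Koszul complex invariance $K(\mrm{L}\Lambda(A,\bar{\m});\bar{x}_1,\dots,\bar{x}_n) \simeq K(A;\bar{x}_1,\dots,\bar{x}_n)$ for elements generating the maximal ideal; this should follow from the base-change properties of derived completion developed in \cite{ShComp} combined with the fact that finitely generated $\bar{\m}$-torsion modules are their own derived completions.
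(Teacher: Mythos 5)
Your argument is correct in outline, but it takes a genuinely different route from the paper's. The paper also reduces to the two conditions of Theorem \ref{thm:main} and handles regularity of $\mrm{H}^0$ exactly as you do; for the Cohen--Macaulay half, however, it quotes the fact that $A$ is local-Cohen--Macaulay if and only if $\mrm{L}\Lambda(A,\bar{\m})$ is, and then upgrades ``local-Cohen--Macaulay'' to ``Cohen--Macaulay'' by noting that both $\mrm{H}^0$'s are regular, hence catenary with irreducible spectrum, so Proposition \ref{prop:const} and Theorem \ref{thm:lCMisCM} apply. You instead bypass the Cohen--Macaulay theory and transfer regular sequences directly, using that for a generating set $\bar{x}_1,\dots,\bar{x}_n$ of $\bar{\m}$ the cohomology of $K(A;\bar{x}_1,\dots,\bar{x}_n)$ is finitely generated and killed by $\bar{\m}$, hence derived complete, so that $K(\mrm{L}\Lambda(A,\bar{\m});\bar{x}_1,\dots,\bar{x}_n)\cong K(A;\bar{x}_1,\dots,\bar{x}_n)$ and the $\inf$-criterion of \cite[Lemma 2.13(2)]{Mi2} passes back and forth. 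This is more self-contained on the Cohen--Macaulay side (it avoids Theorem \ref{thm:lCMisCM} entirely and yields as a byproduct that $\kappa(A)\cong\kappa(\mrm{L}\Lambda(A,\bar{\m}))$), at the cost of needing more from the theory of derived completion: that $\mrm{L}\Lambda$ commutes with tensoring against compact DG-modules, and that $\inf(\mrm{L}\Lambda(A,\bar{\m}))=\inf(A)$, which follows from $\mrm{H}^i(\mrm{L}\Lambda(A,\bar{\m}))\cong\what{\mrm{H}^i(A)}$ and faithful flatness of completion, and which you use implicitly when invoking the $\inf$-criterion. One step to make precise: in the converse direction, after replacing the regular system of parameters $\bar{y}_1,\dots,\bar{y}_n$ of $\mrm{L}\Lambda(A,\bar{\m})$ by a minimal generating set $\bar{x}_1,\dots,\bar{x}_n$ of $\bar{\m}$, the Koszul comparison alone does not show that the images of the $\bar{x}_i$ are $\mrm{L}\Lambda(A,\bar{\m})$-regular; you need Corollary \ref{cor:min} (or Propositions \ref{prop:inv-mat} and \ref{prop:koszulInvMat}) to pass from the sequence $\bar{y}$ to the sequence $\bar{x}$. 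With that supplied, your proof goes through.
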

\begin{proof}
Let $B = \mrm{L}\Lambda(A,\bar{\m})$.
By \cite[Proposition 4.16]{ShComp},
we know that $\mrm{H}^0(B)$ is simply the $\bar{\m}$-adic completion of $\mrm{H}^0(A)$.
Hence, by \cite[Proposition 2.2.2]{BH}, 
it holds that $\mrm{H}^0(A)$ is a regular local ring if and only if $\mrm{H}^0(B)$ is a regular local ring.
Moreover, by \cite[Proposition 4.6]{ShCM}, the DG-ring $A$ is local-Cohen–Macaulay if and only if $B$ is local-Cohen–Macaulay.
Suppose now that one of $A,B$ is sequence-regular.
By using Theorem \ref{thm:main},
we thus see that $\mrm{H}^0(A)$ and $\mrm{H}^0(B)$ are both catenary,
and have irreducible spectrum.
By Proposition \ref{prop:const} we then deduce that they both have constant amplitude,
and by Theorem \ref{thm:lCMisCM}, we thus see that $A$ and $B$ are both Cohen–Macaulay.
Hence, the result.
\end{proof}

Next we discuss the behavior of sequence-regularity with respect to quotients.
Given a regular local ring $A$,
and given an ideal $I \subseteq A$,
it is well known that $A/I$ is regular if and only if $I$ is generated by a subset of a regular system of parameters of $A$.
See for instance \cite[Proposition 2.2.4]{BH}.
In the DG setting, we replace quotients with Koszul complexes, which are the derived functors of the quotient operation.
Here is a DG version of the above mentioned fact.

\begin{thm}\label{thm:seq-reg}
Let $(A,\bar{\m})$ be a sequence-regular commutative noetherian local DG-ring with bounded cohomology,
and let $\bar{a}_1,\dots,\bar{a}_n \in \bar{\m}$.
Then $K(A;\bar{a}_1,\dots,\bar{a}_n)$ is a sequence-regular DG-ring if and only if
the ideal generated by $\bar{a}_1,\dots,\bar{a}_n$ in $\mrm{H}^0(A)$ 
can be generated by a subset of a regular system of parameters of $A$.
\end{thm}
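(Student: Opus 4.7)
The plan is to prove each direction separately, the forward direction by invoking the characterization of sequence-regularity from Theorem \ref{thm:main} together with the classical commutative-algebra criterion for quotients of regular local rings, and the backward direction by a direct amplitude computation using Proposition \ref{prop:kosAmp}. Writing $B := K(A;\bar{a}_1,\dots,\bar{a}_n)$ and $I := (\bar{a}_1,\dots,\bar{a}_n)$, the forward direction is immediate: if $B$ is sequence-regular, Theorem \ref{thm:main} says $\mrm{H}^0(B) = \mrm{H}^0(A)/I$ is a regular local ring, and the classical result \cite[Proposition 2.2.4]{BH} then forces $I$ to be generated by a subset of a regular system of parameters of $\mrm{H}^0(A)$.

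For the converse, suppose $I$ is generated by $\bar{b}_1,\dots,\bar{b}_k$, a subset of a regular system of parameters $\bar{b}_1,\dots,\bar{b}_d$ of $\mrm{H}^0(A)$, where $d = \dim \mrm{H}^0(A)$. The images of $\bar{b}_{k+1},\dots,\bar{b}_d$ in $\mrm{H}^0(B) = \mrm{H}^0(A)/I$ generate its maximal ideal, so it suffices to show they form a $B$-regular sequence. Since $A$ is sequence-regular, Theorem \ref{thm:main} gives that $A$ is Cohen–Macaulay and that $\mrm{H}^0(A)$ is a regular (hence integral domain) local ring; in particular $\mrm{H}^0(A)$ has a unique minimal prime, so by Proposition \ref{prop:const} the DG-ring $A$ has constant amplitude. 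Therefore Proposition \ref{prop:kosAmp} applies to every Koszul complex built from $A$.

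The computation then runs as follows. Applying Proposition \ref{prop:kosAmp} to $\bar{a}_1,\dots,\bar{a}_n$, whose ideal $I$ satisfies $\dim \mrm{H}^0(A)/I = d-k$, one gets $\amp(B) = n - d + (d-k) + \amp(A) = n - k + \amp(A)$. Applying it to the concatenated sequence $\bar{a}_1,\dots,\bar{a}_n,\bar{b}_{k+1},\dots,\bar{b}_d$, which generates the full maximal ideal $\bar{\m}$, one gets $\amp\bigl(K(A;\bar{a}_1,\dots,\bar{a}_n,\bar{b}_{k+1},\dots,\bar{b}_d)\bigr) = (n+d-k) - d + 0 + \amp(A) = n - k + \amp(A)$. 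The iterative nature of the Koszul construction, using that $B^0 = A^0$ and that Koszul complexes commute with concatenation of sequences, gives the identification $K(A;\bar{a}_1,\dots,\bar{a}_n,\bar{b}_{k+1},\dots,\bar{b}_d) = K(B;\bar{b}_{k+1},\dots,\bar{b}_d)$. Hence $\amp(K(B;\bar{b}_{k+1},\dots,\bar{b}_d)) = \amp(B)$, and by the criterion \cite[Lemma 2.13(2)]{Mi2} (used already in the proof of Proposition \ref{prop:sopIsRegular}), this equality is exactly the statement that $\bar{b}_{k+1},\dots,\bar{b}_d$ is a $B$-regular sequence. Since it generates the maximal ideal of $\mrm{H}^0(B)$, we conclude that $B$ is sequence-regular. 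The only delicate point I anticipate is the iterativity identification of the two Koszul complexes over $A$ and over $B$, but this is immediate from the definition recalled in Section \ref{sec:reg-seq}; once it is in hand, the whole argument rests cleanly on the twin amplitude computations.
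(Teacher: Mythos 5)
Your proof is correct, and the backward direction takes a genuinely different route from the paper's. The forward direction coincides with the paper: Theorem \ref{thm:main} applied to $K(A;\bar{a}_1,\dots,\bar{a}_n)$ plus \cite[Proposition 2.2.4]{BH}. For the converse, the paper does not compute amplitudes at all: it first invokes \cite[Theorem 4.2]{ShKos} to conclude that $K(A;\bar{a}_1,\dots,\bar{a}_n)$ is automatically a Cohen--Macaulay DG-ring (since $A$ is Cohen--Macaulay with constant amplitude), so that by Theorem \ref{thm:main} the whole statement reduces in one stroke to whether $\mrm{H}^0(A)/(\bar{a}_1,\dots,\bar{a}_n)$ is a regular local ring, which \cite[Proposition 2.2.4]{BH} settles; both directions then fall out of a single chain of equivalences. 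You instead exhibit an explicit $B$-regular sequence generating the maximal ideal of $\mrm{H}^0(B)$, verifying regularity via two applications of Proposition \ref{prop:kosAmp} and the amplitude criterion of \cite[Lemma 2.13(2)]{Mi2}. Your route buys independence from the external input \cite[Theorem 4.2]{ShKos} and stays closer to the definition of sequence-regularity; the paper's route is shorter and yields the extra information that the derived quotient is always Cohen--Macaulay even when it fails to be sequence-regular. All the steps you flag as delicate do go through: the concatenation identity $K(A;\bar{a}_1,\dots,\bar{a}_n,\bar{b}_{k+1},\dots,\bar{b}_d)\cong K(B;\bar{b}_{k+1},\dots,\bar{b}_d)$ follows from the construction via lifts to $A^0=B^0$, and the passage from the $\inf$ form of Minamoto's criterion to the amplitude form is licensed because both DG-rings have nonvanishing $\mrm{H}^0$.
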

\begin{proof}
By Theorem \ref{thm:main}, 
we know that $A$ is a Cohen–Macaulay DG-ring with constant amplitude.
Hence, according to \cite[Theorem 4.2]{ShKos},
for any finite sequence of elements $\bar{a}_1,\dots,\bar{a}_n \in \bar{\m}$,
the DG-ring $K(A;\bar{a}_1,\dots,\bar{a}_n)$ is also Cohen–Macaulay.
It follows from using Theorem \ref{thm:main} again that $K(A;\bar{a}_1,\dots,\bar{a}_n)$ 
is sequence-regular if and only if the ring
\[
\mrm{H}^0\left(K(A;\bar{a}_1,\dots,\bar{a}_n)\right) = \mrm{H}^0(A)/(\bar{a}_1,\dots,\bar{a}_n)
\]
is a regular local ring.
As mentioned above, by \cite[Proposition 2.2.4]{BH},
this is the case if and only if the ideal
\[
(\bar{a}_1,\dots,\bar{a}_n) \subseteq \mrm{H}^0(A)
\]
can be generated by a subset of a regular system of parameters of $\mrm{H}^0(A)$,
which, by Corollary \ref{cor:min}, is the same as a subset of a regular system of parameters of $A$.
\end{proof}

\begin{rem}
Let $(A,\m)$ is a noetherian local ring,
and let $a_1,\dots,a_n \in \m$ be any sequence of elements.
If $A$ is Gorenstein, then by \cite[Theorem 4.9]{FJ} (or \cite{AvGol} in case this is a minimal generating sequence of $\m$), 
the derived quotient $K(A;a_1,\dots,a_n)$ is always a Gorenstein DG-ring.
Similarly, if $A$ is Cohen–Macaulay,
by \cite[Theorem 4.2]{ShKos},
the Koszul complex $K(A;a_1,\dots,a_n)$ is always a Cohen–Macaulay DG-ring.
In contrast, if we apply Theorem \ref{thm:seq-reg}
to a regular local ring $(A,\m)$,
we see that $K(A;a_1,\dots,a_n)$ is a sequence-regular DG-ring only in the case where the natural map
\begin{equation}\label{eqn:kosH0}
K(A;a_1,\dots,a_n) \to \mrm{H}^0\left(K(A;a_1,\dots,a_n)\right) = A/(a_1,\dots,a_n)
\end{equation}
is a quasi-isomorphism.
Indeed, by the above theorem, when $K(A;a_1,\dots,a_n)$ is sequence-regular,
the sequence $a_1,\dots,a_n$ is an $A$-regular sequence,
so all the cohomlogies of the Koszul complex except for the top one vanish.
Hence, (\ref{eqn:kosH0}) is a quasi-isomorphism.
The conclusion is that while derived quotients of Gorenstein and Cohen–Macaulay rings are always Gorenstein and Cohen–Macaulay respectively, the derived quotient of a regular local ring is sequence-regular only in cases where it coincide with the ordinary quotient.
\end{rem}

In contrast with the above,
derived quotients of Gorenstein and Cohen–Macaulay rings can often be sequence-regular,
even if they do not coincide with ordinary quotients:
\begin{prop}
Let $A$ be a Cohen–Macaulay ring,
and let $a_1,\dots,a_n \in A$.
Then the Koszul complex $K(A;a_1,\dots,a_n)$ is a sequence-regular DG-ring if and only if
the quotient ring $A/(a_1,\dots,a_n)$ is a regular ring.
\end{prop}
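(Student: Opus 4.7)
The plan is to reduce the statement to the global characterization provided by Corollary \ref{cor:main-global}, and then to observe that the Cohen–Macaulay part comes for free from the assumption on $A$.

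First I would set $K := K(A;a_1,\dots,a_n)$ and note that $\mathrm{H}^0(K) = A/(a_1,\dots,a_n)$ by construction of the Koszul complex. By Corollary \ref{cor:main-global}, $K$ is sequence-regular if and only if $K$ is a Cohen–Macaulay DG-ring \emph{and} $\mathrm{H}^0(K)$ is a regular ring. Given this reformulation, the forward direction of the proposition is immediate: sequence-regularity forces $\mathrm{H}^0(K) = A/(a_1,\dots,a_n)$ to be regular.

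For the converse, the content is to verify that $K$ is automatically Cohen–Macaulay whenever $A$ is. Here I would appeal to \cite[Theorem 4.2]{ShKos}, which was quoted in the preceding remark precisely in this form: derived quotients (Koszul complexes) of Cohen–Macaulay rings are Cohen–Macaulay DG-rings. Combining this with the assumption that $A/(a_1,\dots,a_n)$ is regular and re-applying Corollary \ref{cor:main-global} yields sequence-regularity of $K$.

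The only genuine subtlety — and the one I expect to be the main pinch point — is checking that \cite[Theorem 4.2]{ShKos}, which in the present paper was used in a Cohen–Macaulay local DG-ring of constant amplitude (see the proof of Theorem \ref{thm:seq-reg}), indeed transfers to this global ring setting. If one needs to make this explicit, the route is to localize: for each $\bar{\mathfrak{p}} \in \operatorname{Spec}(\mathrm{H}^0(K))$, pulled back to a prime $\mathfrak{p} \supseteq (a_1,\dots,a_n)$ of $A$, one has $K_{\bar{\mathfrak{p}}} \cong K(A_{\mathfrak{p}}; a_1/1,\dots,a_n/1)$; since $A_{\mathfrak{p}}$ is a Cohen–Macaulay local ring (in particular of constant amplitude, having only one minimal prime in the relevant sense for ordinary CM rings), \cite[Theorem 4.2]{ShKos} applies at each such localization and shows $K_{\bar{\mathfrak{p}}}$ is local-Cohen–Macaulay, whence $K$ is Cohen–Macaulay. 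With this, the proof is complete by the equivalence recorded in Corollary \ref{cor:main-global}.
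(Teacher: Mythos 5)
Your proposal is correct and follows essentially the same route as the paper: the paper invokes \cite[Corollary 4.6]{ShKos}, which already packages the global statement that Koszul complexes over Cohen--Macaulay rings are Cohen--Macaulay DG-rings, and then applies Corollary \ref{cor:main-global}, so your explicit localization to reduce to \cite[Theorem 4.2]{ShKos} is just an unpacking of that citation. One small correction: a Cohen--Macaulay local ring need not have a unique minimal prime; the reason $A_{\mathfrak{p}}$ has constant amplitude is simply that any ordinary ring does trivially, since its only nonvanishing cohomology sits in degree $0$.
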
 
\begin{proof}
According to \cite[Corollary 4.6]{ShKos},
since $A$ is Cohen–Macaulay, the Koszul complex $K(A;a_1,\dots,a_n)$ is a Cohen–Macaulay DG-ring.
Hence, by Corollary \ref{cor:main-global},
the Koszul complex $K(A;a_1,\dots,a_n)$ is sequence-regular if and only if
\[
\mrm{H}^0\left(K(A;a_1,\dots,a_n)\right) = A/(a_1,\dots,a_n)
\]
is a regular ring.
\end{proof}

This allows one to produce many concrete examples of sequence-regular DG-rings.
For instance, here is a simple example:
\begin{exa}
Let $\K$ be a field, 
and let $A = \K[[x,y]]/(x\cdot y)$.
This ring is Cohen–Macaulay, and even Gorenstein, but is not regular.
Since $x\in A$ is a zero-divisor, 
the Koszul complex $K(A;x)$ has amplitude greater then $0$,
and the ordinary quotient $A/x\cong \K[[y]]$ is a regular local ring,
which implies that $K(A;x)$ is a sequence-regular DG-ring which is not equivalent to a ring.
\end{exa}

Next we wish to study the behavior of the sequence-regular property under flat local homomorphisms.
The correct notion of flatness used here is the notion of a map of flat dimension $0$.
This, by definition, is a map of commutative DG-rings $f:A \to B$,
such that for all $M \in \cat{D}^{\mrm{b}}(A)$,
there is an inequality $\amp(M\otimes^{\mrm{L}}_A B) \le \amp(M)$.
We refer the reader to \cite[Section 2.2.1]{GaRoz} and \cite[Section 2.2.2]{ToVe} for a discussion about this notion.
The important thing to note is that in this case the map $\mrm{H}^0(f):\mrm{H}^0(A) \to \mrm{H}^0(B)$ is also flat,
and that for all $M \in \cat{D}(A)$ and all $n\in \mathbb{N}$,
there is a natural isomorphism $\mrm{H}^n(M\otimes^{\mrm{L}}_A B) \cong \mrm{H}^n(M)\otimes_{\mrm{H}^0(A)} \mrm{H}^0(B)$.
In the local case, we then have the following important property which we will need in the sequel:
\begin{lem}\label{lem:amp-flat}
Let $A,B$ be commutative noetherian local DG-rings,
and let $f:A \to B$ be a map of DG-rings,
such that the induced map $\mrm{H}^0(f):\mrm{H}^0(A)\to \mrm{H}^0(B)$ is local.
Then for any $M \in \cat{D}(A)$,
there is an equality
\[
\amp(M) = \amp(M\otimes^{\mrm{L}}_A B).
\]
\end{lem}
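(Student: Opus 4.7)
The plan is to leverage the two properties of flat dimension $0$ maps highlighted just before the lemma: that $\mrm{H}^0(f)$ is flat, and that cohomology commutes with the derived base change, i.e.\ there is a natural isomorphism
\[
\mrm{H}^n(M\otimes^{\mrm{L}}_A B) \;\cong\; \mrm{H}^n(M)\otimes_{\mrm{H}^0(A)} \mrm{H}^0(B)
\]
for every $n \in \mathbb{Z}$ and every $M \in \cat{D}(A)$.

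The first step is to upgrade flatness of $\mrm{H}^0(f)$ to faithful flatness. Since $\mrm{H}^0(f):\mrm{H}^0(A)\to \mrm{H}^0(B)$ is by hypothesis a local homomorphism of noetherian local rings, and since any flat local homomorphism of local rings is automatically faithfully flat (the maximal ideal of the source goes into the maximal ideal of the target, so no nonzero module is killed after base change), we obtain that $\mrm{H}^0(f)$ is faithfully flat.

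The second step is the direct consequence: for a finitely generated or in fact any $\mrm{H}^0(A)$-module $N$, faithful flatness of $\mrm{H}^0(f)$ gives $N \otimes_{\mrm{H}^0(A)}\mrm{H}^0(B) = 0$ if and only if $N=0$. Applying this to $N = \mrm{H}^n(M)$ and using the displayed isomorphism, we get $\mrm{H}^n(M\otimes^{\mrm{L}}_A B) = 0$ if and only if $\mrm{H}^n(M) = 0$, for every $n \in \mathbb{Z}$.

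The third and final step is to read off the amplitudes: the equivalence in the previous step says the set of integers $n$ for which cohomology is nonzero is identical on both sides, so $\inf(M) = \inf(M\otimes^{\mrm{L}}_A B)$ and $\sup(M) = \sup(M\otimes^{\mrm{L}}_A B)$, and the equality $\amp(M) = \amp(M\otimes^{\mrm{L}}_A B)$ follows by the definition of $\amp$. There is essentially no obstacle here; the only subtlety to keep in mind is that the lemma is interesting precisely because $f$ need not itself be faithfully flat as a DG map, only that locality of $\mrm{H}^0(f)$ is enough to force the faithful flatness at the $\mrm{H}^0$-level, which is all that the flat-dimension-zero framework requires to control cohomology.
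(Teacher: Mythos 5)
Your proof is correct and is essentially identical to the paper's: both deduce faithful flatness of $\mrm{H}^0(f)$ from it being a flat local homomorphism, and then use the isomorphism $\mrm{H}^n(M\otimes^{\mrm{L}}_A B) \cong \mrm{H}^n(M)\otimes_{\mrm{H}^0(A)} \mrm{H}^0(B)$ to conclude that cohomology vanishes in the same degrees on both sides. (Both arguments, yours and the paper's, tacitly import the flat-dimension-$0$ hypothesis from the discussion preceding the lemma, since it is not stated in the lemma itself.)
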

\begin{proof}
Being a flat local map,
the map $\mrm{H}^0(f)$ is faithfully flat.
Since for any $n\in \mathbb{Z}$ we have that
\[
\mrm{H}^n(M\otimes^{\mrm{L}}_A B) \cong \mrm{H}^n(M)\otimes_{\mrm{H}^0(A)} \mrm{H}^0(B),
\]
and since $\mrm{H}^0(B)$ is faithfully flat over $\mrm{H}^0(A)$,
we deduce that $\mrm{H}^n(M\otimes^{\mrm{L}}_A B) = 0$
if and only if $\mrm{H}^n(M) = 0$.
This implies the equality
\[
\amp(M) = \amp(M\otimes^{\mrm{L}}_A B).
\]
\end{proof}

Given a flat local homomorphism $\varphi:(A,\m) \to (B,\n)$ between noetherian local rings,
according to \cite[Theorem 2.2.12]{BH},
if the ring $B$ is regular then $A$ is regular,
while if $A$ and the fiber ring $B/\m B$ are regular, then $B$ is regular.
Here is a DG version of this result.

\begin{thm}\label{thm:transfer-flat}
Let $(A,\bar{\m}), (B,\bar{\n})$ be commutative noetherian local DG-rings with bounded cohomology,
and let $f:A \to B$ be a map of DG-rings of flat dimension $0$,
such that the induced map $\mrm{H}^0(f):(\mrm{H}^0(A),\bar{\m}) \to (\mrm{H}^0(B),\bar{\n})$ is a local homomorphism.
Assume further that $A$ and $B$ have dualizing DG-modules.
\begin{enumerate}
\item If the DG-ring $B$ is sequence-regular, then the DG-ring $A$ is sequence-regular.
\item If the DG-ring $A$ is sequence-regular,
and the fiber $\mrm{H}^0(B)/\bar{\m}\mrm{H}^0(B)$ is regular,
then the DG-ring $B$ is sequence-regular.
\end{enumerate}
\end{thm}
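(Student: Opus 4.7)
The plan is to use Theorem \ref{thm:main} in both directions: translate sequence-regularity to the conjunction of the Cohen--Macaulay property and regularity of $\mrm{H}^0$, transfer each of these two properties along $f$ separately, and then reassemble. Since both items of the theorem are of the same shape, I will describe them in parallel.

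For the $\mrm{H}^0$-part, I would invoke the classical Auslander--Buchsbaum--Serre transfer along flat local maps, i.e.\ \cite[Theorem 2.2.12]{BH}, applied to the flat local ring homomorphism $\mrm{H}^0(f):\mrm{H}^0(A)\to\mrm{H}^0(B)$. In part (1) this directly yields that $\mrm{H}^0(A)$ is regular from the regularity of $\mrm{H}^0(B)$; in part (2), combining the regularity of $\mrm{H}^0(A)$ with that of the fiber $\mrm{H}^0(B)/\bar{\m}\mrm{H}^0(B)$ yields regularity of $\mrm{H}^0(B)$.

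For the Cohen--Macaulay part, I would use the dualizing-DG-module characterization of the local-Cohen--Macaulay property: a local DG-ring $C$ with bounded cohomology that admits a dualizing DG-module $R_C$ is local-Cohen--Macaulay if and only if $\amp(C)=\amp(R_C)$. Let $R_A$ be a dualizing DG-module over $A$. The key technical input is that, because $f$ is of flat dimension $0$, the DG-module $R_A\otimes^{\mrm{L}}_A B$ is a dualizing DG-module over $B$; this is the standard DG version of flat base change for dualizing complexes, see \cite{YeDual}. By the uniqueness of dualizing DG-modules up to shift recalled in the preliminaries, $R_A\otimes^{\mrm{L}}_A B\cong R_B[n]$ for some $n$, and in particular $\amp(R_A\otimes^{\mrm{L}}_A B)=\amp(R_B)$ for any dualizing DG-module $R_B$ over $B$. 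Applying Lemma \ref{lem:amp-flat} to both $A$ and $R_A$ then gives $\amp(A)=\amp(B)$ and $\amp(R_A)=\amp(R_A\otimes^{\mrm{L}}_A B)=\amp(R_B)$. Hence in each of the two cases, knowing local-Cohen--Macaulayness on one side (via Theorem \ref{thm:main}) immediately transfers it to the other side.

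At this point I will have shown, in each of the two parts, that $\mrm{H}^0$ of the target DG-ring is a regular local ring and that the target DG-ring is local-Cohen--Macaulay. To upgrade local-Cohen--Macaulay to Cohen--Macaulay, I would observe that a regular local ring is an integral domain (in particular has a unique minimal prime) and is catenary, so Proposition \ref{prop:const} yields constant amplitude, and then Theorem \ref{thm:lCMisCM} promotes the local-Cohen--Macaulay property to the Cohen--Macaulay property. Finally, applying Theorem \ref{thm:main} in the reverse direction concludes sequence-regularity.

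The only nontrivial external ingredient is the flat base change statement for dualizing DG-modules, i.e.\ that $R_A\otimes^{\mrm{L}}_A B$ is dualizing over $B$ whenever $R_A$ is dualizing over $A$ and $A\to B$ is of flat dimension $0$; this is the place where I expect the proof to need a careful citation (the argument, however, is the usual one: flat base change preserves the finite injective dimension and the natural homothety map, and $R_A\otimes^{\mrm{L}}_A B\in\cat{D}^{+}_{\mrm{f}}(B)$ follows from the preservation of cohomology and finite generation under flat base change). Everything else is a bookkeeping application of Theorem \ref{thm:main}, Theorem \ref{thm:lCMisCM}, Proposition \ref{prop:const}, Lemma \ref{lem:amp-flat}, and \cite[Theorem 2.2.12]{BH}.
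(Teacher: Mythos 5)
Your overall strategy is the same as the paper's: decompose sequence-regularity via Theorem \ref{thm:main}, transfer regularity of $\mrm{H}^0$ by \cite[Theorem 2.2.12]{BH}, transfer the local-Cohen--Macaulay property by comparing amplitudes of dualizing DG-modules via Lemma \ref{lem:amp-flat}, and upgrade to Cohen--Macaulay with Proposition \ref{prop:const} and Theorem \ref{thm:lCMisCM}. However, there is a genuine gap at exactly the step you flag as needing ``a careful citation'': it is \emph{not} true that $R_A\otimes^{\mrm{L}}_A B$ is a dualizing DG-module over $B$ merely because $f$ has flat dimension $0$. Already for ordinary rings, flat base change does not preserve dualizing complexes: take $A=k$ a field and $B$ a noetherian local $k$-algebra that is not Gorenstein; then $k$ is dualizing over $k$, but $k\otimes_k B=B$ is dualizing over $B$ if and only if $B$ is Gorenstein. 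The sketch you give (``flat base change preserves the finite injective dimension and the natural homothety map'') fails at the first clause --- finite injective dimension is not preserved under flat base change. The correct hypothesis for base change of dualizing objects is that the map be \emph{Gorenstein}, i.e.\ flat with Gorenstein fibers, and this is precisely the definition the paper exploits.

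The paper closes this gap as follows. In part (1), since $B$ is sequence-regular, $\mrm{H}^0(B)$ is regular, hence Gorenstein, and \cite[Corollary 3.3.15]{BH} applied to the flat local map $\mrm{H}^0(f)$ shows the fiber $\mrm{H}^0(B)/\bar{\m}\mrm{H}^0(B)$ is Gorenstein; in part (2) the fiber is regular by hypothesis, hence Gorenstein. In either case $\mrm{H}^0(f)$ is a Gorenstein homomorphism, and because $f$ has flat dimension $0$ one can lift this (by \cite[Theorem 6.9]{ShTwist}) to conclude that $f$ itself is a Gorenstein homomorphism of DG-rings --- which by definition means exactly that $R\otimes^{\mrm{L}}_A B$ is dualizing over $B$ whenever $R$ is dualizing over $A$. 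With that input supplied, the rest of your amplitude bookkeeping (including the appeal to uniqueness of dualizing DG-modules up to shift, which the paper sidesteps by using $S=R\otimes^{\mrm{L}}_A B$ directly together with \cite[Proposition 4.4]{ShCM}) goes through as you describe.
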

\begin{proof}
\begin{enumerate}[wide, labelwidth=!, labelindent=0pt]
\item Since $f$ is of flat dimension $0$,
the map $\mrm{H}^0(f):\mrm{H}^0(A) \to \mrm{H}^0(B)$ is also flat.
By Theorem \ref{thm:main},
we know that $B$ is Cohen–Macaulay and that $\mrm{H}^0(B)$ is a regular local ring.
Using \cite[Theorem 2.2.12]{BH} applied to the flat map $\mrm{H}^0(f)$,
we deduce that $\mrm{H}^0(A)$ is also a regular local ring.
Being regular, the local ring $\mrm{H}^0(B)$ is also a Gorenstein ring,
so using \cite[Corollary 3.3.15]{BH},
we deduce that the fiber ring $\mrm{H}^0(B)/\bar{\m}\mrm{H}^0(B)$ is Gorenstein.
In other words, the morphism $\mrm{H}^0(f):\mrm{H}^0(A)\to \mrm{H}^0(B)$ is a Gorenstein homomorphism
(in the sense of \cite{AvFox}, but also in the sense of Grothendieck, as this is a classical situation of a flat map).
Since $f$ is of flat dimension $0$, 
by \cite[Theorem 6.9]{ShTwist} we may lift the Gorenstein property and deduce that $f$ itself is also a Gorenstein homomorphism.
By definition this means that if $R$ is a dualizing DG-module over $A$,
then $S = R\otimes^{\mrm{L}}_A B$ is a dualizing DG-module over $B$.
Since $B$ is Cohen–Macaulay,
by \cite[Proposition 4.4]{ShCM},
there is an equality $\amp(S) = \amp(B)$.
Using Lemma \ref{lem:amp-flat} there are equalities $\amp(A) = \amp(B)$,
and $\amp(S) = \amp(R)$.
Hence, $\amp(A) = \amp(R)$,
so using \cite[Proposition 4.4]{ShCM} again,
we deduce that $A$ is local-Cohen–Macaulay.
As $\mrm{H}^0(A)$ is regular, we then deduce that $A$ is Cohen–Macaulay,
so by Theorem \ref{thm:main}, $A$ is sequence-regular.
\item The proof of the second statement is similar:
By Theorem \ref{thm:main},
the ring $\mrm{H}^0(A)$ is regular,
and $A$ is Cohen–Macaulay.
Since $\mrm{H}^0(f)$ is flat and its fiber is regular,
by \cite[Theorem 2.2.12]{BH}, the ring $\mrm{H}^0(B)$ is regular.
Let $R$ be a dualizing DG-module over $A$.
Again, we see that the map $\mrm{H}^0(f)$ is Gorenstein,
so by \cite[Theorem 6.9]{ShTwist}, the map $f$ is again Gorenstein.
Thus, using \cite[Proposition 4.4]{ShCM} and Lemma \ref{lem:amp-flat}, 
we see that
\[
\amp(B) = \amp(A) = \amp(R) = \amp(R\otimes^{\mrm{L}}_A B),
\]
so that $B$ is local-Cohen–Macaulay, 
and as before, regularity of $\mrm{H}^0(B)$ also implies that $B$ is Cohen–Macaulay.
\end{enumerate}
\end{proof}

\begin{rem}\label{rem:dual-complex}
The assumption that $A$ and $B$ have dualizing DG-modules is not necessary in the above result.
There are two ways to remove it.
One way is to pass from $f:A \to B$ to its derived completion (in the sense of \cite{ShComp}).
It is possible to show, using DG versions of results such as those of \cite{YekFlat}
that the derived completion of $f$ is also of flat dimension $0$.
But then the local DG-rings $A,B$ are derived adically complete,
so by \cite[Proposition 7.21]{ShInj},
they have dualizing DG-modules.
Then, the proof above holds, 
and finally one may use Corollary \ref{cor:compl} to deduce sequence-regularity of $A$ or $B$.
Alternatively, we conjecture that any sequence-regular DG-ring has a dualizing DG-module.
This would follow from a DG version of \cite[Theorem 4.3.5]{Lur} which says that for a noetherian $E_{\infty}$-ring $R$,
if $\pi_0(R)$ has a dualizing complex, then $R$ has a dualizing complex.
In our context, if $A$ is sequence-regular, then $\mrm{H}^0(A)$ is a regular local ring,
so in particular $\mrm{H}^0(A)$ is a dualizing complex over itself.
\end{rem}

\section{The residue DG-field of a sequence-regular DG-ring}\label{sec:resField}

Given a commutative noetherian local ring $(A,\m)$,
one may associate to it the residue field $A/\m$,
which is of great importance in commutative algebra. 
In particular, $A$ is regular exactly when $A/\m$ has finite projective dimension over $A$.
Suppose now that $(A,\bar{\m})$ is a commutative noetherian DG-ring with bounded cohomology.
Of course, one may still associate to it the residue field $\mrm{H}^0(A)/\bar{\m}$,
but unlike the case of rings, this is now an object with different amplitude then $A$.
Moreover, considered as an object of $\cat{D}(A)$, the DG-module $\mrm{H}^0(A)/\bar{\m}$
\textbf{never} has finite projective dimension.
In this section we discuss an alternative construction in the DG-setting.

If $(A,\bar{\m})$ is any commutative noetherian DG-ring with bounded cohomology,
and if $\bar{a}_1,\dots,\bar{a}_n$ is a sequence of elements in $\bar{\m}$ that generates $\bar{\m}$,
we may consider the Koszul complex $K = K(A;\bar{a}_1,\dots,\bar{a}_n)$.
By basic properties of the Koszul complex, $K$, considered as an object of $\cat{D}(A)$ has finite projective dimension,
and moreover it is compact.
Furthermore, $\mrm{H}^0(K) = \mrm{H}^0(A)/\bar{\m}$ is a field, 
so one may think of $K$ as a residue DG-field associated to $A$.
However, there are two basic properties that this construction loses over the classical one.
First, usually one has $\amp(K) > \amp(A)$.
In fact, we shall see below that these amplitudes are equal if and only if $A$ is sequence-regular.
A more serious problem is that the above construction of $K$ depends on the choice of the generators of $\bar{\m}$,
and that different choices lead to non-isomorphic objects. 

We shall show below that both of these problems do not occur if $A$ happens to be sequence-regular.
In that case, we will be able to associate to it a canonical residue DG-field, independent of any choices,
and moreover, this association will be functorial with respect to flat regular maps.

We begin with the following simple observation regarding the amplitude of a residue DG-field.
\begin{prop}
Let $(A,\bar{\m})$ be a commutative noetherian DG-ring with bounded cohomology.
Then there exist elements $\bar{a}_1,\dots,\bar{a}_n \in \bar{\m}$
such that $K = K(A;\bar{a}_1,\dots,\bar{a}_n)$ satisfies that $\mrm{H}^0(K) = \mrm{H}^0(A)/\bar{\m}$,
and that $\amp(K) = \amp(A)$ if and only if $A$ is sequence-regular.
In that case, this equality is satisfied for any regular system of parameters of $A$.
\end{prop}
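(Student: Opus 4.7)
The plan is to reduce both directions to the criterion of Minamoto \cite[Lemma 2.13(2)]{Mi2} already invoked in the proof of Proposition \ref{prop:sopIsRegular}: a sequence $\bar{a}_1,\dots,\bar{a}_n \in \bar{\m}$ is $A$-regular if and only if $\inf(A) = \inf\bigl(K(A;\bar{a}_1,\dots,\bar{a}_n)\bigr)$, and since $A$ and its Koszul complex are both non-positive this is equivalent to $\amp(A) = \amp\bigl(K(A;\bar{a}_1,\dots,\bar{a}_n)\bigr)$. The first thing I would record is the standard identification $\mrm{H}^0\bigl(K(A;\bar{a}_1,\dots,\bar{a}_n)\bigr) = \mrm{H}^0(A)/(\bar{a}_1,\dots,\bar{a}_n)$, so that the requirement $\mrm{H}^0(K) = \mrm{H}^0(A)/\bar{\m}$ is nothing more than the requirement that $\bar{a}_1,\dots,\bar{a}_n$ generate $\bar{\m}$.

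Granted this reformulation, the implication from the existence of such a sequence to sequence-regularity is immediate: if $\bar{a}_1,\dots,\bar{a}_n$ generate $\bar{\m}$ and satisfy $\amp(K) = \amp(A)$, Minamoto's criterion turns the amplitude equality into the statement that this sequence is $A$-regular, whence $\bar{\m}$ is generated by an $A$-regular sequence and $A$ is sequence-regular by definition.

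For the converse together with the final sentence, assume $A$ is sequence-regular and let $\bar{a}_1,\dots,\bar{a}_n$ be any regular system of parameters, i.e.\ any minimal generating sequence of $\bar{\m}$. By Corollary \ref{cor:min} this sequence is automatically $A$-regular, and applying Minamoto's criterion in the other direction yields $\amp\bigl(K(A;\bar{a}_1,\dots,\bar{a}_n)\bigr) = \amp(A)$, which establishes the required existence and simultaneously proves the last assertion.

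There is no substantive obstacle here: once the amplitude equality is recognised as just a restatement of $A$-regularity via Minamoto's lemma, the proposition is essentially a direct corollary of Corollary \ref{cor:min}.
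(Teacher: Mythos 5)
Your proof is correct and follows essentially the same route as the paper: both reduce the amplitude condition to $A$-regularity via Minamoto's criterion and observe that the condition on $\mrm{H}^0(K)$ is just generation of $\bar{\m}$. You are slightly more explicit than the paper in handling the final sentence via Corollary \ref{cor:min}, which is a welcome addition rather than a deviation.
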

\begin{proof}
Note that $\mrm{H}^0(K) = \mrm{H}^0(A)/\bar{\m}$ if and only if $\bar{a}_1,\dots,\bar{a}_n$ generate $\bar{\m}$.
On the other hand, 
by \cite[Lemma 2.13(2)]{Mi2},
there is an equality $\amp(K) = \amp(A)$ if and only if
the sequence $\bar{a}_1,\dots,\bar{a}_n$ is $A$-regular.
We thus see that these two properties hold if and only if there is an $A$-regular sequence which generates $\bar{\m}$,
which exactly says that $A$ is sequence-regular.
\end{proof}

The following is a basic property regarding the uniqueness of a regular system of parameters in a regular local ring.

\begin{prop}\label{prop:inv-mat}
Let $(A,\m)$ be a regular local ring,
and let $a_1,\dots,a_n$ and $b_1,\dots,b_n$ be two regular systems of parameters of $A$.
Then there exist an invertible matrix $R \in \opn{GL}_n(A)$,
such that 
\[
R\cdot \begin{bmatrix} a_1 \\ \vdots \\ a_n \end{bmatrix} = \begin{bmatrix} b_1 \\ \vdots \\ b_n \end{bmatrix}
\]
\end{prop}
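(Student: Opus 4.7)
The plan is to express the $b_i$'s as $A$-linear combinations of the $a_j$'s and then verify invertibility of the resulting matrix via reduction modulo $\m$.

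First, since $a_1,\dots,a_n$ generate $\m$ and each $b_i$ lies in $\m$, I can choose elements $r_{ij} \in A$ with $b_i = \sum_{j=1}^n r_{ij} a_j$, and define $R = (r_{ij}) \in \opn{Mat}_n(A)$. By construction $R$ satisfies the required identity. It remains to show $R \in \opn{GL}_n(A)$, equivalently that $\det(R)$ is a unit of $A$.

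Since $A$ is local with maximal ideal $\m$, $\det(R)$ is a unit if and only if its image in the residue field $\k = A/\m$ is nonzero, i.e., if and only if the reduced matrix $\bar{R} \in \opn{Mat}_n(\k)$ is invertible. Here I would use the standard characterization coming from Nakayama's lemma: because $A$ is regular of dimension $n$, any regular system of parameters is in particular a minimal generating set of $\m$, and such minimal generating sets are in bijection (via reduction modulo $\m^2$) with $\k$-bases of the $n$-dimensional vector space $\m/\m^2$. Thus both $\{\bar{a}_j\}$ and $\{\bar{b}_i\}$ are $\k$-bases of $\m/\m^2$, where the overlines denote images in $\m/\m^2$.

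Reducing the equations $b_i = \sum_j r_{ij} a_j$ modulo $\m^2$, I get $\bar{b}_i = \sum_j \bar{r}_{ij}\,\bar{a}_j$ in $\m/\m^2$, where $\bar{r}_{ij}$ denotes the image of $r_{ij}$ in $\k$. Hence $\bar{R}$ is the change-of-basis matrix from the basis $\{\bar{a}_j\}$ to the basis $\{\bar{b}_i\}$, and is therefore invertible over $\k$. Consequently $\det(R)$ is a unit in $A$, and $R \in \opn{GL}_n(A)$, as required. The argument is entirely routine; the only conceptual step is the translation between minimal generators of $\m$ and bases of $\m/\m^2$, which is immediate from Nakayama.
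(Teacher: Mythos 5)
Your proof is correct, but it takes a genuinely different route from the paper's. You only construct the single matrix $R$ with $R\mathbf{a}=\mathbf{b}$ and certify its invertibility by reducing modulo $\m^2$: since $A$ is regular of dimension $n$, both sequences are minimal generating sets of $\m$ and hence induce $\k$-bases of the $n$-dimensional cotangent space $\m/\m^2$, so $\bar{R}$ is a change-of-basis matrix over $\k$ and $\det(R)$ is a unit. The paper instead constructs both matrices $R$ and $S$ (expressing each system in terms of the other), observes that $(SR-I_n)\mathbf{a}=0$, and uses the fact that $a_1,\dots,a_n$ is an $A$-regular sequence to conclude that all entries of $SR-I_n$ lie in $(a_1,\dots,a_n)=\m$ (the syzygies of a regular sequence are Koszul relations, with coefficients in the ideal); hence $\det(SR)\in 1+\m$ is a unit and $R$ is invertible. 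Your argument is the more standard linear-algebraic one and leans on the regularity of $A$ through the identity $\dim_\k(\m/\m^2)=n$; the paper's argument leans instead on the regular-sequence property of the parameters and never mentions $\m/\m^2$, which fits the Koszul-complex theme of the surrounding text. Both are complete and correct.
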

\begin{proof}
Since both $a_1,\dots,a_n$ and $b_1,\dots,b_n$ generate the maximal ideal $\m$,
we may find elements $c_{i,j}, d_{i,j}$ for $1\le i,j \le n$,
such that for all $1\le i \le n$, it holds that
\[
b_i = \sum_{j=1}^n c_{i,j}\cdot a_j, \quad a_i = \sum_{j=1}^n d_{i,j}\cdot b_j.
\]
Thus, letting $R=(c_{i,j}), S=(d_{i,j}) \in M_n(A)$, we see that 
\[
R\cdot \begin{bmatrix} a_1 \\ \vdots \\ a_n \end{bmatrix} = \begin{bmatrix} b_1 \\ \vdots \\ b_n \end{bmatrix}, \quad
S\cdot \begin{bmatrix} b_1 \\ \vdots \\ b_n \end{bmatrix} = \begin{bmatrix} a_1 \\ \vdots \\ a_n \end{bmatrix}.
\]
It remains to show that $R$ is invertible.
Letting $\mathbf{a} = \left[\begin{smallmatrix} a_1 \\ \vdots \\ a_n \end{smallmatrix}\right]$,
and denoting by $I_n$ the identity matrix over $R$ of size $n$,
the above implies that 
\[
(S\cdot R - I_n)\cdot \mathbf{a} = 0.
\]
Denoting by $(e_{i,j})$ the coefficients of the matrix $S\cdot R - I_n$,
we see that for all $1 \le i \le n$,
there is an equality
\[
\sum_{j=1}^n e_{i,j}\cdot a_j = 0.
\]
Since $a_1,\dots,a_n$ is an $A$-regular sequence,
this implies (for instance by the proof of \cite[Theorem 16.1]{Mat}) that $e_{i,j} \in (a_1,\dots,a_n)\cdot A = \m$.
Denoting by $\pi:A\to A/\m$ the natural map, 
we deduce that $\pi(S\cdot R) = \pi(I_n)$,
and hence $\pi(\det(S\cdot R)) = \det(\pi(S\cdot R)) = 1$,
which shows that $\det(S\cdot R)\in 1+\m$. Hence, $S\cdot R$ is invertible, so that $R$ is invertible.
\end{proof}

Given a ring $A$, the Koszul complex of $A$ with respect to a finite sequence of elements $a_1,\dots,a_n$ is invariant with respect to the action of $\opn{GL}_n(A)$. 
Our next result is a DG version of this fact.

\begin{prop}\label{prop:koszulInvMat}
Let $A$ be a commutative DG-ring,
and let $\bar{a}_1,\dots,\bar{a}_n \in \mrm{H}^0(A)$.
Suppose we are given an invertible matrix $\bar{T} \in \opn{GL}_n(\mrm{H}^0(A))$,
and let
\[
\left(\begin{matrix}\bar{b}_1\\ \vdots\\ \bar{b}_n\end{matrix}\right) = \bar{T}\cdot \left(\begin{matrix}\bar{a}_1\\ \vdots \\ \bar{a}_n\end{matrix}\right).
\]
Then there is a natural isomorphism in the homotopy category of DG-algebras over $A$:
\[
K(A;\bar{a}_1,\dots,\bar{a}_n) \cong K(A;\bar{b}_1,\dots,\bar{b}_n).
\]
\end{prop}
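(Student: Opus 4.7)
The plan is to lift the data to $A^0$, construct an explicit morphism between the corresponding Koszul DG-algebras, and verify that it is a quasi-isomorphism. First, I would pick lifts $a_j \in A^0$ of $\bar{a}_j$ and $c_{i,j}\in A^0$ of $\bar{T}_{i,j}$, and set $b_i := \sum_{j=1}^n c_{i,j}\cdot a_j \in A^0$; this is automatically a lift of $\bar{b}_i$. By \cite[Proposition 2.6]{ShKos}, these lifts represent $K(A;\bar{a}_1,\ldots,\bar{a}_n)$ and $K(A;\bar{b}_1,\ldots,\bar{b}_n)$ in the homotopy category by the DG-algebras $\mcal{K}^{(a)} := A\otimes_{A^0} K(A^0;a_1,\dots,a_n)$ and $\mcal{K}^{(b)} := A\otimes_{A^0} K(A^0;b_1,\dots,b_n)$, whose underlying graded $A$-algebras are the free strictly graded-commutative $A$-algebras on generators $y_1,\dots,y_n$ and $y'_1,\dots,y'_n$ respectively in cohomological degree $-1$, with differentials $dy_i=a_i$ and $dy'_i=b_i$.

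Second, I would define $\phi\colon \mcal{K}^{(b)}\to \mcal{K}^{(a)}$ as the morphism of $A$-DG-algebras given by $y'_i\mapsto \sum_j c_{i,j}\cdot y_j$, extended multiplicatively. Compatibility with the differential holds on generators: $d\phi(y'_i)=\sum_j c_{i,j}\cdot a_j = b_i = \phi(dy'_i)$, and by freeness this determines $\phi$ as a morphism of $A$-DG-algebras. Since quasi-isomorphisms are the weak equivalences in the model category of non-positive DG-algebras over $A$, it will suffice to show that $\phi$ is a quasi-isomorphism.

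For this I would use the bounded increasing filtration by exterior degree: let $F^k\mcal{K}^{(b)}$ be the $A$-submodule spanned by products of at most $k$ of the generators $y'_i$, and similarly for $\mcal{K}^{(a)}$. The internal $A$-differential preserves exterior degree, whereas the Koszul part lowers it by one, so each $F^k$ is a DG-submodule of $\mcal{K}^{(b)}$ and $\phi$ respects the filtration. The associated graded piece $\mrm{gr}^k$ is isomorphic as a DG-$A$-module to $A^{\binom{n}{k}}$ (with only the internal differential of $A$), and $\phi$ induces on it multiplication by the matrix $\Lambda^k(C)\in M_{\binom{n}{k}}(A^0)$, where $C=(c_{i,j})$. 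The image of $\Lambda^k(C)$ under $\pi^0_A$ is $\Lambda^k(\bar{T})\in \opn{GL}_{\binom{n}{k}}(\mrm{H}^0(A))$, which is invertible; hence $\phi|_{\mrm{gr}^k}$ acts as an invertible matrix on each cohomology $\mrm{H}^i(\mrm{gr}^k)=\mrm{H}^i(A)^{\binom{n}{k}}$, and is therefore a quasi-isomorphism. Induction on $k$ using the short exact sequences $0\to F^{k-1}\to F^k\to \mrm{gr}^k\to 0$ and the five-lemma on the long exact cohomology sequences then yields that $\phi$ itself is a quasi-isomorphism.

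The main obstacle to be circumvented is that $\bar{T}$ may fail to lift to an invertible matrix over $A^0$, since $\ker(\pi^0_A)=\mrm{im}(d\colon A^{-1}\to A^0)$ need not be contained in the Jacobson radical of $A^0$. Consequently the classical $\opn{GL}_n$-invariance of the Koszul complex over a ring is not directly available at the level of $A^0$, and $\phi$ need not admit a chain-level two-sided inverse. The filtration argument above circumvents this by exploiting that being a quasi-isomorphism only requires invertibility after passing to cohomology, which is inherited from the invertibility of $\Lambda^k(\bar{T})$ over $\mrm{H}^0(A)$.
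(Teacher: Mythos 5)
Your proof is correct, but it takes a genuinely different route from the paper's. The paper circumvents the lifting obstacle you identify at the end by first replacing $A$ with the quasi-isomorphic DG-ring $B = A\otimes_{A^0} S^{-1}A^0$, where $S = (\pi^0_A)^{-1}(\bar{S})$ and $\bar{S}$ is the set of units of $\mrm{H}^0(A)$; in $B^0$ an element is invertible precisely when its image in $\mrm{H}^0(B)$ is, so $\bar{T}$ does lift to a matrix in $\opn{GL}_n(B^0)$, and one then quotes the classical $\opn{GL}_n$-invariance of the Koszul complex over the ring $B^0$ \cite[tag 0625]{SP} and applies base change along $B^0 \to B$. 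You instead build the comparison morphism $\phi$ directly over $A^0$, where the lifted matrix $C$ need not be invertible, and prove it is a quasi-isomorphism by filtering by exterior degree: the associated graded pieces are $A^{\binom{n}{k}}$ with only the internal differential, $\phi$ acts there by $\Lambda^k(C)$, and since the $A^0$-action on $\mrm{H}^i(A)$ factors through $\mrm{H}^0(A)$, invertibility of $\Lambda^k(\bar{T})$ suffices; the five-lemma induction on the filtration steps is sound. In effect you reprove the classical invariance statement in the DG setting rather than reducing to it. Your argument is more self-contained and avoids the localization trick; the paper's is shorter because it delegates the multilinear algebra to the classical result. Both proofs make choices of lifts and rely in the same way on \cite[Proposition 2.6]{ShKos} to identify the resulting DG-algebras with $K(A;\bar{a}_1,\dots,\bar{a}_n)$ and $K(A;\bar{b}_1,\dots,\bar{b}_n)$ in the homotopy category.
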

\begin{proof}
Denote by $\pi^0_A:A^0 \to \mrm{H}^0(A)$ the degree zero of the natural map $\pi_A:A \to \mrm{H}^0(A)$,
and denote by $\bar{S}\subseteq \mrm{H}^0(A)$ the multiplicatively closed set which consists of all invertible elements in $\mrm{H}^0(A)$. 
Let $S = (\pi^0_A)^{-1}(\bar{S})$. 
Then by definition $S$ is also a multiplicatively closed set.
Consider the localization DG-ring $B = A\otimes_{A^0} S^{-1}A^0$.
By \cite[Proposition 4.6]{YeDual}, 
we know that 
\[
\mrm{H}^n(B) = \mrm{H}^n(A)\otimes_{\mrm{H}^0(A)} \bar{S}^{-1}\mrm{H}^0(A) = \mrm{H}^n(A),
\]
so the natural map $A \to B$ is a quasi-isomorphism.
It follows that we may replace $A$ by $B$, 
and it is enough to prove the theorem for the DG-ring $B$.
The reason for this somewhat funny replacement is that $B$ has the following extra property:
the natural map $\pi^0_B:B^0 \to \mrm{H}^0(B)$ has the property that $b\in B^0$ is invertible if and only if $\pi^0_B(b) \in \mrm{H}^0(B)$ is invertible.
Now, suppose we are given elements $\bar{a}_1,\dots,\bar{a}_n \in \mrm{H}^0(B)$,
and an invertible matrix $T \in \opn{GL}_n(\mrm{H}^0(B))$.
As $\pi^0_B$ is surjective, we may find, for each $1 \le i \le n$,
an element $a_i \in B^0$ such that $\pi^0_B(a_i) = \bar{a}_i$.
Similarly, writing $\bar{T} = (\bar{c}_{i,j})$,
for each $1 \le i,j \le n$,
let us choose an element $c_{i,j} \in B^0$,
such that $\pi^0_B(c_{i,j}) = \bar{c}_{i,j}$,
and gather these elements to a matrix $T = (c_{i,j}) \in M_n(B^0)$.
Since $\pi^0_B$ is a ring homomorphism,
it follows that
\[
\pi^0_B(\det(T)) = \det(\pi^0_B(T)) = \det(\bar{T}),
\]
and by the property of $\pi^0_B$ mentioned above,
the fact that $\det(\bar{T})$ is invertible implies that $\det(T)$ is invertible,
so that $T \in \opn{GL}_n(B^0)$.
Let us set
\[
\left(\begin{matrix}b_1\\ \vdots\\ b_n\end{matrix}\right) = T\cdot \left(\begin{matrix}a_1\\ \vdots \\ a_n\end{matrix}\right).
\]
It follows from our choices of $T$ and $a_1,\dots,a_n$ that for all $1\le i \le n$ there is an equality $\pi^0_B(b_i) = \bar{b}_i$.
Functoriality of the Koszul complex implies that
there is a natural isomorphism
\[
K(B^0;a_1,\dots,a_n) \cong K(B^0;b_1,\dots,b_n).
\]
On the other hand, by \cite[Proposition 2.6]{ShKos}
(or by \cite[Lemma 2.8]{Mi2} if $n=1$),
we know that there are natural isomorphisms
\[
K(B;\bar{a}_1,\dots,\bar{a}_n) \cong K(B^0;a_1,\dots,a_n)\otimes^{\mrm{L}}_{B^0} B, 
\]
and
\[ 
K(B;\bar{b}_1,\dots,\bar{b}_n) \cong K(B^0;b_1,\dots,b_n)\otimes^{\mrm{L}}_{B^0} B
\]
in the homotopy category of DG-algebras over $B$.
Combining all these natural isomorphisms, 
we obtain the required isomorphism
\[
K(B;\bar{a}_1,\dots,\bar{a}_n) \cong K(B;\bar{b}_1,\dots,\bar{b}_n).
\]
\end{proof}

Using the above results, we now construct the residue DG-field associated to a local sequence-regular DG-ring.

\begin{thm}\label{thm:unique-DG-field}
Let $(A,\bar{\m})$ be a sequence-regular commutative noetherian local DG-ring,
and let $\bar{a}_1,\dots,\bar{a}_n$ by a regular system of parameters of $A$.
\begin{enumerate}
\item If $\hat{a}_1,\dots,\hat{a}_n$ is another regular system of parameters of $A$,
then there is a natural isomorphism in the homotopy category of DG-algebras over $A$:
\[
K(A;\bar{a}_1,\dots,\bar{a}_n) \cong K(A;\hat{a}_1,\dots,\hat{a}_n)
\]
\item If $(B,\bar{\n})$ is another sequence-regular noetherian local DG-ring,
$f:A \to B$ is a local homomorphism of DG-rings,
such that $\mrm{H}^0(f)$ is flat and the fiber $\mrm{H}^0(b)/\bar{\n}$ is a regular ring,
then there exist a regular system of parameters $\bar{b}_1,\dots,\bar{b}_m$ of $B$
and a map of DG-rings $K(A;\bar{a}_1,\dots,\bar{a}_n) \to K(B;\bar{b}_1,\dots,\bar{b}_m)$
making the diagram
\[
\begin{tikzcd}
A \arrow[r, "f"] \arrow[d] & B \arrow[d]\\
K(A;\bar{a}_1,\dots,\bar{a}_n) \arrow[r] & K(B;\bar{b}_1,\dots,\bar{b}_m)
\end{tikzcd}
\]
commutative, 
where the vertical maps are the natural maps from a DG-ring to its Koszul complex.
\end{enumerate}
\end{thm}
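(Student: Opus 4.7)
The plan is to handle the two parts separately, in each case reducing the DG question to a classical fact about regular local rings via the matrix and base-change formulas for Koszul complexes developed earlier.

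For part (1), I observe that by Theorem \ref{thm:main} the ring $\mrm{H}^0(A)$ is a regular local ring, and by the definition following Corollary \ref{cor:min} both $\bar{a}_1,\dots,\bar{a}_n$ and $\hat{a}_1,\dots,\hat{a}_n$ are minimal generating sequences of $\bar{\m}$. In particular both sequences have the same length $n=\dim(\mrm{H}^0(A))$, equal to the minimal number of generators of $\bar{\m}$. Invoking Proposition \ref{prop:inv-mat} in the regular local ring $\mrm{H}^0(A)$ produces an invertible matrix $\bar{T}\in \opn{GL}_n(\mrm{H}^0(A))$ carrying one sequence to the other, and Proposition \ref{prop:koszulInvMat} then yields the required natural isomorphism of Koszul complexes in the homotopy category of DG-algebras over $A$.

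For part (2), I first apply Theorem \ref{thm:transfer-flat}(2) to conclude that $B$ is sequence-regular. I then construct a regular system of parameters of $B$ in two stages: set $\bar{b}_i = \mrm{H}^0(f)(\bar{a}_i)$ for $1\le i\le n$, and let $\bar{b}_{n+1},\dots,\bar{b}_m \in \bar{\n}$ be any lifts of a regular system of parameters of the regular local ring $\mrm{H}^0(B)/\bar{\m}\mrm{H}^0(B)$. The classical fact that for a flat local map of regular local rings a regular system of parameters downstairs is assembled from a regular system upstairs together with lifts from the fiber (a consequence of the dimension formula $\dim(\mrm{H}^0(B)) = \dim(\mrm{H}^0(A)) + \dim(\mrm{H}^0(B)/\bar{\m}\mrm{H}^0(B))$ and Nakayama) guarantees that $\bar{b}_1,\dots,\bar{b}_m$ is a minimal generating set of $\bar{\n}$, hence a regular system of parameters of $B$ by Corollary \ref{cor:min}.

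To produce the horizontal arrow of the diagram, I choose lifts $a_i\in A^0$ of $\bar{a}_i$ and set $b_i = f(a_i) \in B^0$ for $1\le i\le n$; I also pick arbitrary lifts $b_j \in B^0$ of $\bar{b}_j$ for $n<j\le m$. Using the description of the Koszul complex as a base change from degree zero recalled in Section \ref{sec:reg-seq}, there is a canonical identification $K(A;\bar{a}_1,\dots,\bar{a}_n)\otimes_A B \cong K(B;\bar{b}_1,\dots,\bar{b}_n)$, and a canonical map $K(B;\bar{b}_1,\dots,\bar{b}_n) \to K(B;\bar{b}_1,\dots,\bar{b}_m)$ obtained by tensoring with the unit $B\to K(B;\bar{b}_{n+1},\dots,\bar{b}_m)$. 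The required bottom horizontal arrow is their composition
\[
K(A;\bar{a}_1,\dots,\bar{a}_n) \to K(A;\bar{a}_1,\dots,\bar{a}_n)\otimes_A B \cong K(B;\bar{b}_1,\dots,\bar{b}_n) \to K(B;\bar{b}_1,\dots,\bar{b}_m),
\]
and commutativity of the diagram is built into the construction. The main obstacle I anticipate is the bookkeeping of which isomorphisms hold strictly versus in the homotopy category; but since each Koszul complex is presented via a strict base change from $A^0$ or $B^0$, and the lifts $b_i=f(a_i)$ can be chosen compatibly with $f$, the required coherences hold on the nose after the initial choices are fixed.
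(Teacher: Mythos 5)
Your proposal is correct and follows essentially the same route as the paper: part (1) via Theorem \ref{thm:main}, Proposition \ref{prop:inv-mat} and Proposition \ref{prop:koszulInvMat}, and part (2) by pushing the regular system of parameters forward along $\mrm{H}^0(f)$, completing it using the regular fiber (the paper cites the proof of \cite[Theorem 2.2.12(b)]{BH} for the classical fact you reprove via the dimension formula and Nakayama), and composing the base-change map of Koszul complexes with the map that adjoins the extra elements. The only superfluous step is your appeal to Theorem \ref{thm:transfer-flat}(2), since sequence-regularity of $B$ is already a hypothesis of the statement.
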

\begin{proof}
\begin{enumerate}[wide, labelwidth=!, labelindent=0pt]
\item Since $A$ is sequence-regular,
by Theorem \ref{thm:main} and the double-Cohen-Macaulay theorem,
any regular system of parameters of $A$ is also a regular system of parameters of the regular local ring $\mrm{H}^0(A)$.
If $\bar{a}_1,\dots,\bar{a}_n$ and $\hat{a}_1,\dots,\hat{a}_n$ are two such regular systems of parameters,
by Proposition \ref{prop:inv-mat} we may find an invertible matrix $\bar{T} \in \opn{GL}_n(\mrm{H}^0(A))$
which maps $\bar{a}_1,\dots,\bar{a}_n$ to $\hat{a}_1,\dots,\hat{a}_n$.
Then, applying Proposition \ref{prop:koszulInvMat},
we obtain the required natural isomorphism
\[
K(A;\bar{a}_1,\dots,\bar{a}_n) \cong K(A;\hat{a}_1,\dots,\hat{a}_n).
\]
\item Since $\mrm{H}^0(f):\mrm{H}^0(A) \to \mrm{H}^0(B)$ is a flat map with a regular fiber,
by the proof of \cite[Theorem 2.2.12(b)]{BH},
the sequence $\mrm{H}^0(f)(\bar{a}_1),\dots,\mrm{H}^0(f)(\bar{a}_n)$ is part of a regular system of parameters of the regular local ring $\mrm{H}^0(B)$.
Denoting this sequence by $\bar{b}_1,\dots,\bar{b}_n$,
it follows that we may complete it to a regular system of parameters $\bar{b}_1,\dots,\bar{b}_n,\dots,\bar{b}_m$ of $\mrm{H}^0(B)$,
and hence, also of $B$.
By functoriality of Koszul complexes, this implies that there is a natural map $K(A;\bar{a}_1,\dots,\bar{a}_n) \to K(B;\bar{b}_1,\dots,\bar{b}_n)$ making the diagram
\[
\begin{tikzcd}
A \arrow[r, "f"] \arrow[d] & B \arrow[d]\\
K(A;\bar{a}_1,\dots,\bar{a}_n) \arrow[r] & K(B;\bar{b}_1,\dots,\bar{b}_n)
\end{tikzcd}
\]
commutative. 
Finally, the fact that the diagram
\[
\begin{tikzcd}
B \arrow[d] \arrow[rd] &  \\
K(B;\bar{b}_1,\dots,\bar{b}_n) \arrow[r] & K(B;\bar{b}_1,\dots,\bar{b}_m)
\end{tikzcd}
\]
is commutative implies that we may obtain the required diagram by composing the map
$K(A;\bar{a}_1,\dots,\bar{a}_n) \to K(B;\bar{b}_1,\dots,\bar{b}_n)$ with the map
\[
K(B;\bar{b}_1,\dots,\bar{b}_n) \to K(B;\bar{b}_1,\dots,\bar{b}_m).
\]
\end{enumerate}
\end{proof}

In view of Theorem \ref{thm:unique-DG-field}, we make the following definition:
\begin{dfn}
Let $(A,\bar{\m})$ be a sequence-regular commutative noetherian local DG-ring.
We define the residue DG-field of $A$ to be the DG-algebra over $A$ given by 
\[
K(A;\bar{a}_1,\dots,\bar{a}_n),
\]
where $\bar{a}_1,\dots,\bar{a}_n$ is some regular system of parameters of $A$.
We denote the residue DG-field of $A$ by $\kappa(A)$.
\end{dfn}

As mentioned above, since $\kappa(A)$ is given as a Koszul complex, 
it must be compact. 
We can be more explicit, and compute its flat dimension.
Recall that the flat dimension of the DG-module $M$ over a DG-ring $A$ is given by
\[
\opn{flat\dim}_A(M) = \inf\{n\in\mathbb{Z}\mid\opn{Tor}^j_A(M,N) = 0\text{, }\forall N \in \cat{D}^{\mrm{b}}(A)\text{, } \forall j>n-\inf N\}
\] 
If $(A,\m)$ is a regular local ring, it is well known that the residue field $A/\m$ has flat dimension equal to the Krull dimension $\dim(A)$.
Similarly, we have:
\begin{prop}\label{prop:flatDim}
Let $(A,\bar{\m})$ be a sequence-regular commutative noetherian local DG-ring.
Suppose that the Krull dimension of the ring $\mrm{H}^0(A)$ is equal to $d$.
Then 
\[
\opn{flat\dim}_A(\kappa(A)) = d.
\]
\end{prop}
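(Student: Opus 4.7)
The plan is to match upper and lower bounds on $\opn{flat\dim}_A(\kappa(A))$ by working directly with the explicit Koszul semi-free model for the residue DG-field.

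First, by Theorem \ref{thm:main} the ring $\mrm{H}^0(A)$ is regular local of dimension $d$, so any regular system of parameters of $A$ has length exactly $d$; fix such a sequence $\bar{a}_1,\dots,\bar{a}_d$ together with lifts $a_1,\dots,a_d \in A^0$. Then $\kappa(A) \simeq K(A;\bar{a}_1,\dots,\bar{a}_d)$ admits as a model the DG-$A$-algebra
\[
A \otimes \Lambda^*(\epsilon_1,\dots,\epsilon_d),
\]
with $\deg \epsilon_i = -1$ and differential determined by $d(\epsilon_i) = a_i$. As a graded $A$-module this is free on the $2^d$ exterior monomials, concentrated in cohomological degrees $-d,\dots,0$, so it is semi-free over $A$ and may be used directly to compute derived tensor products.

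For the upper bound, for any $M \in \cat{D}^{\mrm{b}}(A)$ this model yields
\[
\kappa(A) \otimes^{\mrm{L}}_A M \simeq M \otimes \Lambda^*(\epsilon_1,\dots,\epsilon_d) = K(M;\bar{a}_1,\dots,\bar{a}_d),
\]
and a contribution to cohomological degree $n$ can only come from $\mrm{H}^i(M)$ with $n-d \le i \le n$. Hence $\kappa(A) \otimes^{\mrm{L}}_A M$ has no cohomology below degree $\inf(M) - d$, so $\opn{Tor}^j_A(\kappa(A),M) = \mrm{H}^{-j}(\kappa(A) \otimes^{\mrm{L}}_A M) = 0$ for every $j > d - \inf(M)$, giving $\opn{flat\dim}_A(\kappa(A)) \le d$.

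For the matching lower bound, I would test against $N := \mrm{H}^0(A)/\bar{\m}$, viewed as an object of $\cat{D}^{\mrm{b}}(A)$ concentrated in degree $0$, so $\inf(N) = 0$. Since each $\bar{a}_i$ acts as zero on $N$, the Koszul differential on $N \otimes \Lambda^*(\epsilon_1,\dots,\epsilon_d)$ vanishes identically, hence
\[
\opn{Tor}^d_A(\kappa(A), N) \cong \mrm{H}^{-d}\bigl(N \otimes \Lambda^*(\epsilon_1,\dots,\epsilon_d)\bigr) \cong N \ne 0.
\]
By the very definition of flat dimension recalled above, this forces $\opn{flat\dim}_A(\kappa(A)) \ge d$, and combining this with the previous bound yields the equality. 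There is no substantial obstacle: the argument is essentially the classical Koszul computation, and the only thing worth verifying carefully is that the exterior-algebra DG-algebra above represents $\kappa(A)$ as a semi-free object of $\cat{D}(A)$ and that the amplitude estimate for derived tensor products against it behaves exactly as in the classical case.
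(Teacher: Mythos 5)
Your argument is correct, but it follows a genuinely different route from the paper. You prove both bounds by hand from the explicit semi-free model $A\otimes\Lambda^*(\epsilon_1,\dots,\epsilon_d)$ of $\kappa(A)$: the upper bound from the fact that this model is a finite free complex concentrated in degrees $[-d,0]$, and the lower bound by testing against $\mrm{H}^0(A)/\bar{\m}$, on which the Koszul differential dies. The paper instead invokes the reduction formula $\opn{flat\dim}_A(\kappa(A)) = \opn{flat\dim}_{\mrm{H}^0(A)}(\kappa(A)\otimes^{\mrm{L}}_A \mrm{H}^0(A))$ from \cite[Theorem 4.1]{ShHomDim}, identifies $\kappa(A)\otimes^{\mrm{L}}_A \mrm{H}^0(A)$ with $K(\mrm{H}^0(A);\bar{a}_1,\dots,\bar{a}_d)\cong \mrm{H}^0(A)/\bar{\m}$ via Koszul base change and regularity of the sequence in $\mrm{H}^0(A)$, and then quotes the classical computation for a regular local ring. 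Your version is more elementary and self-contained (it does not need the cited reduction theorem, nor even that $\bar{a}_1,\dots,\bar{a}_d$ is $\mrm{H}^0(A)$-regular for the two bounds you actually use); the paper's version is shorter given its toolkit and, as a byproduct, establishes the isomorphism $\kappa(A)\otimes^{\mrm{L}}_A \mrm{H}^0(A)\cong\kappa(\mrm{H}^0(A))$, which is recorded as Proposition \ref{prop:redKA} and reused later. One small slip to fix: in the upper bound, the contributions to $\mrm{H}^n(\kappa(A)\otimes^{\mrm{L}}_A M)$ come from $\mrm{H}^i(M)$ with $n \le i \le n+d$ (the exterior generators sit in \emph{negative} degrees), not $n-d\le i\le n$; the conclusion you draw, namely $\inf(\kappa(A)\otimes^{\mrm{L}}_A M)\ge \inf(M)-d$, is nevertheless exactly the correct one.
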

\begin{proof}
According to \cite[Theorem 4.1]{ShHomDim},
there is an equality
\[
\opn{flat\dim}_A(\kappa(A)) = \opn{flat\dim}_{\mrm{H}^0(A)}(\kappa(A)\otimes^{\mrm{L}}_A \mrm{H}^0(A)).
\]
To compute the latter, let $\bar{a}_1,\dots,\bar{a}_n$ be a regular system of parameters of $A$.
Then by definition we have that
\[
\kappa(A)\otimes^{\mrm{L}}_A \mrm{H}^0(A) = K(A;\bar{a}_1,\dots,\bar{a}_n)\otimes^{\mrm{L}}_A \mrm{H}^0(A).
\]
By the base change property of the Koszul complex, \cite[Proposition 2.9]{ShKos},
applied to the map $\pi_A:A\to \mrm{H}^0(A)$, we have that
\[
K(A;\bar{a}_1,\dots,\bar{a}_n)\otimes^{\mrm{L}}_A \mrm{H}^0(A) \cong K(\mrm{H}^0(A);\bar{a}_1,\dots,\bar{a}_n).
\]
Since $\bar{a}_1,\dots,\bar{a}_n$ is also a regular system of parameters of $\mrm{H}^0(A)$,
in particular it is an $\mrm{H}^0(A)$-regular sequence, so there is an isomorphism
\[
K(\mrm{H}^0(A);\bar{a}_1,\dots,\bar{a}_n) \cong \mrm{H}^0(A)/(\bar{a}_1,\dots,\bar{a}_n) = \mrm{H}^0(A)/\bar{\m}.
\]
It follows that 
\[
\opn{flat\dim}_A(\kappa(A)) = \opn{flat\dim}_{\mrm{H}^0(A)}(\kappa(\mrm{H}^0(A))) = d.
\]
\end{proof}

In the course of proving the above result, 
we have also established the following useful property, 
so we record it separately:
\begin{prop}\label{prop:redKA}
Let $(A,\bar{\m})$ be a sequence-regular commutative noetherian local DG-ring.
Then $\kappa(A)\otimes^{\mrm{L}}_A \mrm{H}^0(A) \cong \kappa(\mrm{H}^0(A))$.
\end{prop}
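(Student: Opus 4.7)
The plan is to unwind the definition of $\kappa(A)$ in terms of a concrete Koszul complex, then apply base change for Koszul complexes along $\pi_A$, and finally identify the resulting complex as the residue field of $\mrm{H}^0(A)$ via the fact that a regular system of parameters in a regular local ring is a regular sequence. Since this statement is implicitly verified inside the proof of Proposition \ref{prop:flatDim}, the work here is mainly to isolate and state it cleanly.

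Concretely, the first step is to choose a regular system of parameters $\bar{a}_1,\dots,\bar{a}_n$ of $A$. By Theorem \ref{thm:unique-DG-field}(1), the resulting Koszul complex $K(A;\bar{a}_1,\dots,\bar{a}_n)$ is independent of this choice up to isomorphism in the homotopy category of DG-algebras over $A$, and by definition it represents $\kappa(A)$. The second step is to apply the base change formula for Koszul complexes \cite[Proposition 2.9]{ShKos} to the structure map $\pi_A:A\to \mrm{H}^0(A)$, which produces a canonical isomorphism
\[
\kappa(A)\otimes^{\mrm{L}}_A \mrm{H}^0(A) \;=\; K(A;\bar{a}_1,\dots,\bar{a}_n)\otimes^{\mrm{L}}_A \mrm{H}^0(A) \;\cong\; K(\mrm{H}^0(A);\bar{a}_1,\dots,\bar{a}_n).
\]

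The third step is to recognize the right-hand side as $\kappa(\mrm{H}^0(A))$. By Theorem \ref{thm:main} (together with Corollary \ref{cor:min} and the double-Cohen--Macaulay theorem, Theorem \ref{thm:DoubleCM}), the sequence $\bar{a}_1,\dots,\bar{a}_n$ is simultaneously a regular system of parameters of the regular local ring $\mrm{H}^0(A)$ and an $\mrm{H}^0(A)$-regular sequence generating $\bar{\m}$. Consequently the classical Koszul complex $K(\mrm{H}^0(A);\bar{a}_1,\dots,\bar{a}_n)$ is quasi-isomorphic to $\mrm{H}^0(A)/(\bar{a}_1,\dots,\bar{a}_n) = \mrm{H}^0(A)/\bar{\m}$, which is precisely the residue (DG-)field of the regular local ring $\mrm{H}^0(A)$, i.e.\ $\kappa(\mrm{H}^0(A))$.

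No genuine obstacle is expected: every ingredient has already been established, and the argument is a one-line combination of base change and the double-Cohen--Macaulay theorem. The only minor subtlety is to confirm that the isomorphism produced is canonical, in the sense that it does not depend on the chosen regular system of parameters; this is guaranteed by Theorem \ref{thm:unique-DG-field}(1), since the base change functor $(-)\otimes^{\mrm{L}}_A \mrm{H}^0(A)$ preserves the homotopy-category isomorphisms that make $\kappa(A)$ well-defined.
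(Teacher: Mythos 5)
Your proof is correct and follows the same route as the paper: the paper establishes this identity inside the proof of Proposition \ref{prop:flatDim} by exactly the same chain — choose a regular system of parameters, apply the Koszul base change of \cite[Proposition 2.9]{ShKos} along $\pi_A$, and collapse $K(\mrm{H}^0(A);\bar{a}_1,\dots,\bar{a}_n)$ to $\mrm{H}^0(A)/\bar{\m}$ because the sequence is $\mrm{H}^0(A)$-regular. Your added remark on well-definedness via Theorem \ref{thm:unique-DG-field}(1) is a harmless refinement.
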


We can use this property of $\kappa(A)$ to give another characterization of sequence-regular DG-rings:
\begin{thm}\label{thm:charac-residue}
Let $(A,\bar{\m})$ be a commutative noetherian local DG-ring with bounded cohomology and 
a noetherian model.
Then $A$ is sequence-regular if and only if there exist a commutative noetherian DG-ring $K$,
and a map of DG-rings $A \to K$ such that $K$ is compact over $A$,
there is an equality $\amp(K) = \amp(A)$,
and such that there is a DG-ring isomorphism $K\otimes^{\mrm{L}}_A \mrm{H}^0(A) \cong \mrm{H}^0(A)/\m$.
\end{thm}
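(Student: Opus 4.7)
For the forward direction I take $K = \kappa(A) = K(A;\bar{a}_1,\dots,\bar{a}_d)$ for a regular system of parameters of $A$. Compactness is automatic, since Koszul complexes on finitely many elements are compact; the amplitude equality $\amp(K) = \amp(A)$ is exactly \cite[Lemma 2.13(2)]{Mi2} applied to the $A$-regular sequence $\bar{a}_1,\dots,\bar{a}_d$; and $K \otimes^{\mrm{L}}_A \mrm{H}^0(A) \cong \mrm{H}^0(A)/\bar{\m}$ is Proposition \ref{prop:redKA}.

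For the converse, suppose such a $K$ exists. I first observe that $\mrm{H}^0(A)$ is a regular local ring: the base change $K \otimes^{\mrm{L}}_A \mrm{H}^0(A) \cong \mrm{H}^0(A)/\bar{\m}$ is a perfect complex over $\mrm{H}^0(A)$ (since $K$ is compact over $A$), so the residue field has finite projective dimension over $\mrm{H}^0(A)$, and Auslander-Buchsbaum-Serre gives the regularity. Set $d = \dim(\mrm{H}^0(A))$, fix a regular system of parameters $\bar{a}_1,\dots,\bar{a}_d$ of $\mrm{H}^0(A)$, and let $L = K(A;\bar{a}_1,\dots,\bar{a}_d)$. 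By \cite[Lemma 2.13(2)]{Mi2}, the target equality $\amp(L) = \amp(A)$ would force $\bar{a}_1,\dots,\bar{a}_d$ to be $A$-regular, and since it generates $\bar{\m}$, sequence-regularity of $A$ would follow. So the plan reduces to establishing $L \cong K$ in $\cat{D}(A)$.

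To this end I construct a DG-algebra map $\phi: L \to K$ over $A$. Each $\bar{a}_i$ lies in $\bar{\m}$ and so maps to zero in $\mrm{H}^0(K) = \mrm{H}^0(A)/\bar{\m}$; lifting $\bar{a}_i$ to $a_i \in A^0$ and taking its image in $K^0$, the result is a coboundary $d(\ell_i)$ for some $\ell_i \in K^{-1}$, and one defines $\phi$ on the Koszul generator $e_i$ (of degree $-1$ with $d(e_i) = a_i$) by $e_i \mapsto \ell_i$, extending multiplicatively (the relations $\ell_i^2 = 0$ and $\ell_i \ell_j = -\ell_j \ell_i$ in $K$ hold automatically because $K$ is a commutative DG-ring). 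After base change along $A \to \mrm{H}^0(A)$, the source becomes $K(\mrm{H}^0(A);\bar{a}_1,\dots,\bar{a}_d) \cong \mrm{H}^0(A)/\bar{\m}$ by the Koszul base change formula \cite[Proposition 2.9]{ShKos} together with the $\mrm{H}^0(A)$-regularity of the sequence, the target is $\mrm{H}^0(A)/\bar{\m}$ by assumption, and $\phi \otimes^{\mrm{L}}_A \mrm{H}^0(A)$ is a unital $\mrm{H}^0(A)$-algebra endomorphism of the residue field, hence the identity.

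I conclude by derived Nakayama. Both $L$ and $K$ lie in $\cat{D}^{\mrm{b}}_{\mrm{f}}(A)$, so the cone $C$ of $\phi$ does as well, and $\phi$ being an isomorphism after base change to $\mrm{H}^0(A)$ translates to $C \otimes^{\mrm{L}}_A \mrm{H}^0(A)/\bar{\m} = 0$. If $C \neq 0$, set $s = \sup(C)$; then $\mrm{H}^s(C)$ is a nonzero finitely generated $\mrm{H}^0(A)$-module, and by degree reasons $\mrm{H}^s(C \otimes^{\mrm{L}}_A \mrm{H}^0(A)/\bar{\m}) \cong \mrm{H}^s(C) \otimes_{\mrm{H}^0(A)} \mrm{H}^0(A)/\bar{\m}$, which is nonzero by classical Nakayama. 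This contradicts the vanishing, so $\phi$ is an isomorphism in $\cat{D}(A)$ and $\amp(L) = \amp(K) = \amp(A)$, as required. I expect the main obstacle to be the construction of $\phi$ as an actual DG-algebra map rather than merely a DG-module map: it is precisely unitality that ensures $\phi \otimes^{\mrm{L}}_A \mrm{H}^0(A)$ is the identity on the residue field rather than multiplication by an arbitrary scalar.
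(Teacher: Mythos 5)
Your proof is correct, and while the forward direction coincides with the paper's, your converse takes a genuinely different route. The paper, after extracting regularity of $\mrm{H}^0(A)$ from compactness exactly as you do, forms the dual $N = \mrm{R}\opn{Hom}_A(K,E(A,\bar{\m}))$ against the indecomposable injective DG-module, checks that $N$ lies in $\cat{D}^{\mrm{b}}_{\mrm{f}}(A)$ with $\amp(N)=\amp(A)$ and has finite injective dimension (this is where compactness of $\mrm{R}\opn{Hom}_A(K,A)$, i.e.\ Proposition \ref{prop:hom-compact}, enters), and then invokes the criterion \cite[Theorem 5.22(2)]{ShCM} to conclude that $A$ is local-Cohen--Macaulay, finishing via Theorem \ref{thm:lCMisCM} and Theorem \ref{thm:main}. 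You instead rigidify $K$ itself: you build a strict DG-algebra map $\phi\colon L=K(A;\bar{a}_1,\dots,\bar{a}_d)\to K$ using semi-freeness of the Koszul complex and the vanishing of the $\bar{a}_i$ in $\mrm{H}^0(K)$, and kill the cone by derived Nakayama, so that the hypothesis $\amp(K)=\amp(A)$ transfers to $L$ and forces $A$-regularity of the sequence via \cite[Lemma 2.13(2)]{Mi2}. Your argument is more elementary --- it bypasses injective DG-modules, Matlis duality and the entire Cohen--Macaulay apparatus --- and it yields a bonus the paper's proof does not: any $K$ as in the statement is necessarily isomorphic to $\kappa(A)$ in $\cat{D}(A)$, a uniqueness statement. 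What the paper's route buys is that it stays inside the framework already built (reproving local-Cohen--Macaulayness, which is the form in which the result is reused) and records Proposition \ref{prop:hom-compact} as a byproduct. One point you should make explicit: for the images of the $a_i$ in $K^0$ to be coboundaries and for $\mrm{H}^0(\phi)$ to be the identity rather than a nontrivial endomorphism of the residue field, you need the isomorphism $K\otimes^{\mrm{L}}_A\mrm{H}^0(A)\cong\mrm{H}^0(A)/\bar{\m}$ to be one of $\mrm{H}^0(A)$-algebras (equivalently, that $\mrm{H}^0(A)\to\mrm{H}^0(K)$ is the quotient by $\bar{\m}$); this is the intended reading, it is what the forward direction produces, and the paper's own proof makes the same implicit identification, so it is a matter of stating the hypothesis precisely rather than a gap.
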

\begin{proof}
If $A$ is sequence-regular, then we already saw that $K=\kappa(A)$ satisfies all these properties.
Conversely, suppose that there is such a $K$.
Since $K$ is compact over $A$, by \cite[Theorems 5.11 and 5.20]{YeDual},
we have that 
\[
K\otimes^{\mrm{L}}_A \mrm{H}^0(A) \cong \mrm{H}^0(A)/\m
\]
is compact over $\mrm{H}^0(A)$.
This implies by the Auslander–Buchsbaum-Serre theorem that $\mrm{H}^0(A)$ is a regular local ring.
We further note that by the proof of \cite[Proposition 3.1]{YeDual},
we have that 
\[
\mrm{H}^0(K) = \mrm{H}^0\left(K\otimes^{\mrm{L}}_A \mrm{H}^0(A)\right) = \mrm{H}^0(A)/\m.
\]
Since $K$ is noetherian, and since for all $n\in \mathbb{Z}$, 
the $\mrm{H}^0(A)$-action on $\mrm{H}^n(K)$ factors through the action of the field $\mrm{H}^0(K)$,
we deduce that $\mrm{H}^n(K)$ has finite length over $\mrm{H}^0(A)$.
Let $E=E(A,\bar{\m})$ be the indecomposable injective DG-module which corresponds to the maximal ideal $\bar{\m}$,
in the sense of \cite[Section 7.2]{ShInj}.
By definition, this is a DG-module $E \in \cat{D}(A)$ such that $\mrm{H}^0(E)$ is equal to the injective hull 
$E(\mrm{H}^0(A),\bar{\m})$ 
of $\mrm{H}^0(A)/\bar{\m}$ over $\mrm{H}^0(A)$, and such that $E$ has injective dimension $0$ over $A$.
Let $N = \mrm{R}\opn{Hom}_A(K,E)$.
By \cite[Theorem 4.10]{ShInj}, for all $n\in \mathbb{Z}$ it holds that
\[
\mrm{H}^n(N) = \mrm{H}^n\left(\mrm{R}\opn{Hom}_A(K,E)\right) \cong \opn{Hom}_{\mrm{H}^0(A)}(\mrm{H}^{-n}(K),\mrm{H}^0(E)).
\]
Since $\mrm{H}^0(E)$ is a faithfully injective module over $\mrm{H}^0(A)$,
this implies that $\amp(N) = \amp(K) = \amp(A)$.
Moreover, since $\mrm{H}^{-n}(K)$ has finite length over $\mrm{H}^0(A)$,
Matlis duality implies that $\mrm{H}^n(N)$ also has finite length,
and in particular it is finitely generated over $\mrm{H}^0(A)$.
Let $M \in \cat{D}(A)$.
The fact that $K$ is compact implies by \cite[Theorem 14.1.22]{YeBook} that
\[
\mrm{R}\opn{Hom}_A(M,\mrm{R}\opn{Hom}_A(K,E)) \cong \mrm{R}\opn{Hom}_A(M,\mrm{R}\opn{Hom}_A(K,A)\otimes^{\mrm{L}}_A E).
\]
Our goal in writing the above isomorphism is to show that $N$ has finite injective dimension.
To continue this chain of isomorphisms, we claim that $\mrm{R}\opn{Hom}_A(K,A)$ is compact.
This follows from Proposition \ref{prop:hom-compact} below.
Since $\mrm{R}\opn{Hom}_A(K,A)$ is compact, by \cite[Proposition 2.18]{ShKos},
there is an isomorphism
\[
\mrm{R}\opn{Hom}_A(M,\mrm{R}\opn{Hom}_A(K,A)\otimes^{\mrm{L}}_A E) \cong
\mrm{R}\opn{Hom}_A(M,E) \otimes^{\mrm{L}}_A \mrm{R}\opn{Hom}_A(K,A).
\]
Combining the above, we see that for any $M \in \cat{D}(A)$,
there is an isomorphism
\[
\mrm{R}\opn{Hom}_A(M,\mrm{R}\opn{Hom}_A(K,E)) \cong \mrm{R}\opn{Hom}_A(M,E) \otimes^{\mrm{L}}_A \mrm{R}\opn{Hom}_A(K,A),
\]
so the fact that $E$ has finite injective dimension and $\mrm{R}\opn{Hom}_A(K,A)$ has finite flat dimension over $A$,
implies that $N = \mrm{R}\opn{Hom}_A(K,E)$ has finite injective dimension over $A$.
This, together with the facts, already established, that $\amp(N) = \amp(A)$ and $N \in \cat{D}^{\mrm{b}}_{\mrm{f}}(A)$,
imply by \cite[Theorem 5.22(2)]{ShCM} that $A$ is local-Cohen–Macaulay.
Since $\mrm{H}^0(A)$ is a catenary integral domain,
as we saw before, this implies that $A$ is Cohen–Macaulay,
so by Theorem \ref{thm:main}, we conclude that $A$ is sequence-regular.
\end{proof}

In the course of the above proof, we needed the following basic fact about compact objects over commutative DG-rings:
\begin{prop}\label{prop:hom-compact}
Let $A$ be a commutative DG-ring, 
and let $M,N \in \cat{D}(A)$ be compact DG-modules.
Then $\mrm{R}\opn{Hom}_A(M,N)$ is also compact.
\end{prop}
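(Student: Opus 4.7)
The plan is to reduce the statement to two standard closure properties of compact DG-modules and then invoke the tensor--hom isomorphism that is available once $M$ is compact. First, I would recall that for a DG-ring $A$, an object of $\cat{D}(A)$ is compact precisely when it belongs to the thick subcategory of $\cat{D}(A)$ generated by $A$ itself; see for instance the discussion in \cite{YeBook}. With this characterization in hand, both of the following facts follow from routine thick-subcategory arguments: the DG-module $\mrm{R}\opn{Hom}_A(M,A)$ is compact whenever $M$ is compact (the base case $M=A$ being trivial since $\mrm{R}\opn{Hom}_A(A,A)\cong A$, and the class of such $M$ being closed under shifts, distinguished triangles, and direct summands); and the derived tensor product of two compact DG-modules is again compact (by the same devissage, starting from the trivial case $A\otimes^{\mrm{L}}_A Y \cong Y$).

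Next, I would invoke the natural isomorphism
\[
\mrm{R}\opn{Hom}_A(M,N) \cong \mrm{R}\opn{Hom}_A(M,A)\otimes^{\mrm{L}}_A N,
\]
which holds precisely because $M$ is compact. This isomorphism is the content of \cite[Proposition 2.18]{ShKos} (and can also be extracted from \cite[Theorem 14.1.22]{YeBook} by specializing one of the arguments to $A$). Since $\mrm{R}\opn{Hom}_A(M,A)$ is compact by the first closure property and $N$ is compact by hypothesis, the right-hand side is compact by the second closure property, whence $\mrm{R}\opn{Hom}_A(M,N)$ is compact as well.

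I do not anticipate any serious obstacle. The only point requiring a little care is the initial identification of compact objects of $\cat{D}(A)$ with members of the thick subcategory generated by $A$; in the DG setting this is slightly more subtle than in the classical ring case, but it is by now standard and is exactly the viewpoint adopted throughout \cite{YeBook}. Once that is accepted, the two closure properties and the tensor--hom isomorphism fit together without difficulty to yield the statement.
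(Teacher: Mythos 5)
Your argument is correct, but it takes a genuinely different route from the paper's. You work entirely inside $\cat{D}(A)$: you identify the compact objects with the thick subcategory generated by $A$, establish by d\'evissage that $\mrm{R}\opn{Hom}_A(-,A)$ preserves compactness and that $-\otimes^{\mrm{L}}_A-$ preserves compactness in each variable, and then conclude via the tensor-evaluation isomorphism $\mrm{R}\opn{Hom}_A(M,N) \cong \mrm{R}\opn{Hom}_A(M,A)\otimes^{\mrm{L}}_A N$, which is itself proved by the same d\'evissage in $M$. The paper instead reduces to the ordinary ring $\mrm{H}^0(A)$: using tensor evaluation and adjunction it identifies $\mrm{R}\opn{Hom}_A(M,N)\otimes^{\mrm{L}}_A \mrm{H}^0(A)$ with $\mrm{R}\opn{Hom}_{\mrm{H}^0(A)}(M\otimes^{\mrm{L}}_A \mrm{H}^0(A), N\otimes^{\mrm{L}}_A \mrm{H}^0(A))$, invokes the classical statement over rings (finite projective resolutions), and then uses the fact that compactness over $A$ can be detected after base change to $\mrm{H}^0(A)$ (\cite[Theorem 5.11]{YeDual}). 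Your approach buys generality and self-containment: it needs nothing beyond the compact generation of $\cat{D}(A)$ by $A$, so it applies to arbitrary DG-rings, whereas the detection-after-base-change step in the paper leans on $A$ being non-positive. The paper's approach buys concreteness (everything bottoms out in the classical fact about perfect complexes over a ring) and consistency with the reduction-to-$\mrm{H}^0(A)$ pattern used throughout. One small caveat on citations: \cite[Proposition 2.18]{ShKos} as used elsewhere in the paper is tensor evaluation in the \emph{second} argument against a compact object, whereas the isomorphism you need is evaluation against a compact \emph{first} argument; the statement you want is standard and follows from your own d\'evissage, but the pointer as given does not quite match.
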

\begin{proof}
Since $M$ is compact over $A$,
by \cite[Theorem 14.1.22]{YeBook} there is an isomorphism
\[
\mrm{R}\opn{Hom}_A(M,N) \otimes^{\mrm{L}}_A \mrm{H}^0(A) \cong \mrm{R}\opn{Hom}_A(M,N \otimes^{\mrm{L}}_A \mrm{H}^0(A)).
\]
By adjunction, there is also an isomorphism
\[
\mrm{R}\opn{Hom}_A(M,N \otimes^{\mrm{L}}_A \mrm{H}^0(A)) \cong
\mrm{R}\opn{Hom}_{\mrm{H}^0(A)}(M \otimes^{\mrm{L}}_A \mrm{H}^0(A),N \otimes^{\mrm{L}}_A \mrm{H}^0(A)).
\]
According to \cite[Theorem 5.11]{YeDual},
the complexes $M \otimes^{\mrm{L}}_A \mrm{H}^0(A)$ and $N \otimes^{\mrm{L}}_A \mrm{H}^0(A)$ are both compact over $\mrm{H}^0(A)$,
so by the corresponding result over rings, which easily follows from the characterization of being compact has having bounded projective resolution composed of finitely generated projectives,
we deduce that 
\[
\mrm{R}\opn{Hom}_{\mrm{H}^0(A)}(M \otimes^{\mrm{L}}_A \mrm{H}^0(A),N \otimes^{\mrm{L}}_A \mrm{H}^0(A))
\]
is compact over $\mrm{H}^0(A)$.
Then, the fact that it is isomorphic to $\mrm{R}\opn{Hom}_A(M,N) \otimes^{\mrm{L}}_A \mrm{H}^0(A)$
implies, again by \cite[Theorem 5.11]{YeDual},
that $\mrm{R}\opn{Hom}_A(M,N)$ is compact over $A$.
\end{proof}

The residue DG-field satisfies Nakayama's lemma:
\begin{prop}\label{prop:Nak}
Let $(A,\bar{\m})$ be a sequence-regular noetherian local DG-ring,
and let $M \in \cat{D}^{-}_{\mrm{f}}(A)$.
Then $M \cong 0$ if and only if $\kappa(A)\otimes^{\mrm{L}}_A M \cong 0$.
\end{prop}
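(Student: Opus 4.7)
The ``only if'' direction is immediate, since tensoring a zero object gives zero. For the converse, my plan is to iterate the Koszul triangles that build $\kappa(A)$ one generator at a time, and at each step invoke ordinary Nakayama's lemma on the cohomology modules.

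Fix a regular system of parameters $\bar{a}_1,\dots,\bar{a}_n$ of $A$, so that $\kappa(A) = K(A;\bar{a}_1,\dots,\bar{a}_n)$. For $0\le k\le n$ set
\[
K_k = K(A;\bar{a}_1,\dots,\bar{a}_k), \qquad N_k = K_k \otimes^{\mrm{L}}_A M.
\]
Then $N_0 = M$ and, by hypothesis, $N_n = \kappa(A)\otimes^{\mrm{L}}_A M \cong 0$. Since each $K_k$ is perfect over $A$ and $M \in \cat{D}^{-}_{\mrm{f}}(A)$, we have $N_k \in \cat{D}^{-}_{\mrm{f}}(A)$; in particular each $\mrm{H}^i(N_k)$ is a finitely generated $\mrm{H}^0(A)$-module.

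The standard distinguished triangle for adjoining one element to a Koszul complex gives, after tensoring with $M$ over $A$,
\[
N_{k-1} \xrightarrow{\bar{a}_k} N_{k-1} \to N_k \to N_{k-1}[1]
\]
for each $1 \le k \le n$. I will prove by descending induction on $k$ that $N_k \cong 0$ forces $N_{k-1} \cong 0$; applied starting from $N_n \cong 0$, this gives $N_0 = M \cong 0$. For the inductive step, if $N_k \cong 0$ then the triangle shows that multiplication by $\bar{a}_k$ is an isomorphism on $N_{k-1}$, and so in particular a surjection on each $\mrm{H}^i(N_{k-1})$. Because $\bar{a}_k \in \bar{\m}$, this yields $\bar{\m}\cdot \mrm{H}^i(N_{k-1}) = \mrm{H}^i(N_{k-1})$, and classical Nakayama's lemma applied to the finitely generated $\mrm{H}^0(A)$-module $\mrm{H}^i(N_{k-1})$ forces $\mrm{H}^i(N_{k-1}) = 0$ for every $i$, hence $N_{k-1} \cong 0$.

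There is no serious obstacle here beyond choosing the right reduction: the whole argument is driven by the fact that $\kappa(A)$ is realized \emph{concretely} as a Koszul complex on a regular system of parameters (a feature specific to the sequence-regular case), which lets us peel off one $\bar{a}_k$ at a time. The only mild point to verify is the finite generation of $\mrm{H}^i(N_{k-1})$ over $\mrm{H}^0(A)$, which is immediate from the perfection of $K_{k-1}$ together with $M \in \cat{D}^{-}_{\mrm{f}}(A)$. No results from the Cohen--Macaulay theory developed earlier are needed for this proposition.
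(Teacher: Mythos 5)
Your proof is correct, but it takes a different route from the paper's. The paper argues by contraposition in one step: setting $n = \sup(M)$, it uses the right-exactness of the derived tensor product in the top degree (via \cite[Proposition 3.1]{YeDual}) to identify $\mrm{H}^n(\kappa(A)\otimes^{\mrm{L}}_A M)$ with $\mrm{H}^0(A)/\bar{\m}\otimes_{\mrm{H}^0(A)}\mrm{H}^n(M)$, which is nonzero by a single application of classical Nakayama to the top cohomology module. You instead never compute any cohomology of the tensor product: you peel off one Koszul generator at a time via the cone triangles $N_{k-1}\xrightarrow{\bar{a}_k}N_{k-1}\to N_k\to N_{k-1}[1]$ and apply Nakayama to every $\mrm{H}^i(N_{k-1})$ at each stage. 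Both arguments are sound; the paper's is shorter, while yours is more self-contained (it avoids the K\"unneth-type input, at the cost of invoking the standard Koszul triangles) and makes transparent that the regularity of the sequence is irrelevant --- indeed your induction proves the stronger statement that $K(A;\bar{a}_1,\dots,\bar{a}_n)\otimes^{\mrm{L}}_A M\cong 0$ forces $M\cong 0$ for \emph{any} elements $\bar{a}_1,\dots,\bar{a}_n\in\bar{\m}$, not even required to generate $\bar{\m}$. Your closing observation that sequence-regularity enters only through the existence of the canonical $\kappa(A)$ matches Remark 5.15 of the paper, which makes exactly this point about Propositions \ref{prop:Nak} and \ref{prop:Com}.
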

\begin{proof}
Suppose $M\ncong 0$, and let $n = \sup(M)$.
Then $\mrm{H}^n(M)$ is a non-zero finitely generated module over $\mrm{H}^0(A)$.
Then by \cite[Proposition 3.1]{YeDual} and its proof,
we have that
\[
\mrm{H}^n(\kappa(A)\otimes^{\mrm{L}}_A M) \cong \mrm{H}^0(\kappa(A))\otimes_{\mrm{H}^0(A)} \mrm{H}^n(M) \cong 
\mrm{H}^0(A)/\bar{\m}\otimes_{\mrm{H}^0(A)} \mrm{H}^n(M)
\]
and the latter is non-zero by the classical Nakayama lemma. 
\end{proof}

Given a noetherian local ring $(A,\m)$,
an important property of the residue field $A/\m$ is that it can detect compact objects.
A finitely generated $A$-module $M$ is compact if and only if $(A/\m)\otimes^{\mrm{L}}_A M$ has finite amplitude.
Such a characterization cannot hold in our context, 
because any non-trivial DG-ring has infinite global dimension, 
and by our construction, the residue DG-field $\kappa(A)$ always has finite flat dimension,
so that $\kappa(A)\otimes^{\mrm{L}}_A M$ has finite amplitude whenever $M$ has.
There is however an alternative formulation of the above fact that does generalize to our context:
instead of checking whether $(A/\m)\otimes^{\mrm{L}}_A M$ has finite amplitude,
one can instead consider this as object of $\cat{D}(A/\m)$,
and ask if it is a compact object there.
Then, the usual characterization still holds: 
a bounded above complex with finitely generated cohomology over a field has finite amplitude if and only if it is compact.
Thus, over a local ring $(A,\m)$, a finitely generated module $M$ is compact if and only if $(A/\m)\otimes^{\mrm{L}}_A M$ is compact over $A/\m$.
This generalizes to our context as follows:

\begin{prop}\label{prop:Com}
Let $(A,\bar{\m})$ be a sequence-regular noetherian local DG-ring,
and let $M \in \cat{D}^{\mrm{b}}_{\mrm{f}}(A)$.
Then $M$ is compact if and only if $\kappa(A)\otimes^{\mrm{L}}_A M \in \cat{D}(\kappa(A))$ is compact.
\end{prop}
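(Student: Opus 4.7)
The plan is to reduce the question to compactness over the classical regular local ring $\mrm{H}^0(A)$ using \cite[Theorem 5.11]{YeDual}, and to bridge between $\kappa(A)$ and the classical residue field via Proposition \ref{prop:redKA}. The forward direction is immediate: compactness is preserved by any derived base-change functor, so if $M$ is compact over $A$ then $\kappa(A)\otimes^{\mrm{L}}_A M$ is compact over $\kappa(A)$.

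For the converse, assume $\kappa(A)\otimes^{\mrm{L}}_A M$ is compact over $\kappa(A)$. Applying \cite[Theorem 5.11]{YeDual} to the DG-ring $\kappa(A)$ and base-changing along the natural map $\kappa(A) \to \mrm{H}^0(\kappa(A)) = \mrm{H}^0(A)/\bar{\m}$, one obtains a compact object over the field $\mrm{H}^0(A)/\bar{\m}$, which in particular has bounded cohomology. Associativity of derived tensor product together with Proposition \ref{prop:redKA} identifies this base change with
\[
N \otimes^{\mrm{L}}_{\mrm{H}^0(A)} \bigl(\mrm{H}^0(A)/\bar{\m}\bigr), \quad \text{where } N := M\otimes^{\mrm{L}}_A \mrm{H}^0(A).
\]
Since $M\in \cat{D}^{\mrm{b}}_{\mrm{f}}(A)$ and $\mrm{H}^0(A)$ is noetherian, a convergent spectral sequence shows $N\in \cat{D}^{-}_{\mrm{f}}(\mrm{H}^0(A))$. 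The classical flat-dimension criterion over a noetherian local ring then implies that $N$ has finite flat dimension over $\mrm{H}^0(A)$; and since $\mrm{H}^0(A)$ is a regular local ring by Theorem \ref{thm:main}, every object of $\cat{D}^{-}_{\mrm{f}}(\mrm{H}^0(A))$ of finite flat dimension is compact. A second application of \cite[Theorem 5.11]{YeDual}, now to $A$, yields compactness of $M$ over $A$.

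The only delicate point is the chain of natural isomorphisms identifying the two possible reductions of our data to $M\otimes^{\mrm{L}}_A (\mrm{H}^0(A)/\bar{\m})$ — once through $\kappa(A)$ and once through $\mrm{H}^0(A)$. This is precisely where Proposition \ref{prop:redKA} is used, and it is what allows compactness over $\kappa(A)$ to be translated into a finite flat-dimension statement over $\mrm{H}^0(A)$, after which the regularity of $\mrm{H}^0(A)$ closes the argument.
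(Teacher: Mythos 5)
Your proposal is correct and follows essentially the same route as the paper: pass from compactness over $\kappa(A)$ to compactness of $M\otimes^{\mrm{L}}_A (\mrm{H}^0(A)/\bar{\m})$ over the residue field, identify this with the derived reduction of $N = M\otimes^{\mrm{L}}_A \mrm{H}^0(A)$ modulo $\bar{\m}$, conclude $N$ is perfect over $\mrm{H}^0(A)$ by the classical local criterion, and lift compactness back to $A$ via \cite[Theorem 5.11]{YeDual}. The only cosmetic difference is that your appeal to regularity of $\mrm{H}^0(A)$ is not needed for the last local-ring step (over any noetherian local ring, an object of $\cat{D}^{-}_{\mrm{f}}$ whose derived reduction modulo the maximal ideal is bounded is already perfect), which is how the paper phrases it.
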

\begin{proof}
If $M$ is compact then $\kappa(A)\otimes^{\mrm{L}}_A M$ is compact in $\cat{D}(\kappa(A))$ by \cite[Proposition 5.3]{YeDual}.
Conversely, suppose $\kappa(A)\otimes^{\mrm{L}}_A M$ is compact in $\cat{D}(\kappa(A))$.
Then by \cite[Proposition 5.3]{YeDual} we know that
\[
\mrm{H}^0(\kappa(A))\otimes^{\mrm{L}}_{\kappa(A)} \kappa(A)\otimes^{\mrm{L}}_A M \cong \mrm{H}^0(A)/\bar{\m} \otimes^{\mrm{L}}_A M
\]
is compact in $\cat{D}(\mrm{H}^0(A)/\bar{\m})$.
Considering the object $N = M\otimes^{\mrm{L}}_A \mrm{H}^0(A) \in \cat{D}^{-}_{\mrm{f}}(\mrm{H}^0(A))$,
we thus see that 
\[
N\otimes^{\mrm{L}}_{\mrm{H}^0(A)} \mrm{H}^0(A)/\m \cong \mrm{H}^0(A)/\bar{\m} \otimes^{\mrm{L}}_A M
\]
is compact in $\cat{D}(\mrm{H}^0(A)/\m)$ is compact,
and as explained in the discussion preceding this result, 
this implies that $N$ is compact in $\cat{D}(\mrm{H}^0(A))$.
By \cite[Theorem 5.11]{YeDual},
this implies that $M$ is compact in $\cat{D}(A)$.
\end{proof} 

\begin{rem}
In Propositions \ref{prop:Nak} and \ref{prop:Com} we did not actually need the fact that $A$ is sequence-regular.
The only use of the sequence-regualr property in the above proofs was
in the usage of the canonical $\kappa(A)$,
which does not exist otherwise.
In both of these propositions,
we could alternatively take any set of generators $\bar{a}_1,\dots,\bar{a}_n$ of $\bar{\m}$,
and use $K(A;\bar{a}_1,\dots,\bar{a}_n)$ instead of $\kappa(A)$.
The proofs work the same in this more general context for any noetherian local DG-ring $(A,\bar{\m})$.
\end{rem}

If $A$ is a commutative noetherian ring,
then more generally then above, 
for each prime ideal $\p \in \opn{Spec}(A)$,
there is the associated residue field $\kappa(\p) = A_{\p}/\p A_{\p}$.
Since the localization of a sequence-regular DG-ring is again sequence-regular,
we may thus make the following definition:
\begin{dfn}
Let $A$ be a commutative noetherian sequence-regular DG-ring,
and let $\bar{\p} \in \opn{Spec}(\mrm{H}^0(A))$.
We define the residue DG-field of $A$ at $\bar{\p}$ to be the DG-algebra over $A$,
denoted by $\kappa(A,\bar{\p})$, which is given by
$\varphi_*(\kappa(A_{\bar{\p}}))$,
where $\varphi_*:\cat{D}(A_{\bar{\p}})\to \cat{D}(A)$ is the forgetful functor along the localization map $\varphi:A\to A_{\bar{\p}}$.
\end{dfn}

Generalizing Proposition \ref{prop:redKA}, we have the following basic fact about the residue DG-fields at $\bar{\p}$:
\begin{prop}\label{prop:redP}
Let $A$ be a commutative noetherian sequence-regular DG-ring,
and let $\bar{\p} \in \opn{Spec}(\mrm{H}^0(A))$.
Then $\kappa(A,\bar{\p})\otimes^{\mrm{L}}_A \mrm{H}^0(A) \cong \kappa(\mrm{H}^0(A),\bar{\p}) = \mrm{H}^0(A)_{\bar{\p}}/\bar{\p}\cdot\mrm{H}^0(A)_{\bar{\p}}$.
\end{prop}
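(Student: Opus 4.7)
The plan is to reduce the statement to Proposition \ref{prop:redKA} applied to the local DG-ring $A_{\bar{\p}}$. By Corollary \ref{cor:Serre}, since $A$ is sequence-regular and $\bar{\p} \in \opn{Spec}(\mrm{H}^0(A))$, the localization $A_{\bar{\p}}$ is a sequence-regular commutative noetherian local DG-ring, so its residue DG-field $\kappa(A_{\bar{\p}})$ is well defined and Proposition \ref{prop:redKA} applies.

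The key computation is a standard base-change manipulation. Let $\varphi:A \to A_{\bar{\p}}$ denote the localization map. By associativity of the derived tensor product (i.e. the standard identity $\varphi_*(M)\otimes^{\mrm{L}}_A N \cong M \otimes^{\mrm{L}}_{A_{\bar{\p}}} (A_{\bar{\p}} \otimes^{\mrm{L}}_A N)$ for $M \in \cat{D}(A_{\bar{\p}})$ and $N \in \cat{D}(A)$), applied to $M = \kappa(A_{\bar{\p}})$ and $N = \mrm{H}^0(A)$, we have
\[
\kappa(A,\bar{\p}) \otimes^{\mrm{L}}_A \mrm{H}^0(A) \;=\; \varphi_*(\kappa(A_{\bar{\p}})) \otimes^{\mrm{L}}_A \mrm{H}^0(A) \;\cong\; \kappa(A_{\bar{\p}}) \otimes^{\mrm{L}}_{A_{\bar{\p}}} \bigl(A_{\bar{\p}} \otimes^{\mrm{L}}_A \mrm{H}^0(A)\bigr).
\]
As recalled in Section \ref{sec:loc}, localization along $\bar{\p}$ is a map of flat dimension $0$ and satisfies $\mrm{H}^0(A_{\bar{\p}}) = \mrm{H}^0(A)_{\bar{\p}}$, so the derived tensor product simplifies:
\[
A_{\bar{\p}} \otimes^{\mrm{L}}_A \mrm{H}^0(A) \;\cong\; A_{\bar{\p}} \otimes_A \mrm{H}^0(A) \;\cong\; \mrm{H}^0(A)_{\bar{\p}} \;=\; \mrm{H}^0(A_{\bar{\p}}).
\]

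Combining these with Proposition \ref{prop:redKA} applied to $A_{\bar{\p}}$ yields
\[
\kappa(A,\bar{\p}) \otimes^{\mrm{L}}_A \mrm{H}^0(A) \;\cong\; \kappa(A_{\bar{\p}}) \otimes^{\mrm{L}}_{A_{\bar{\p}}} \mrm{H}^0(A_{\bar{\p}}) \;\cong\; \kappa(\mrm{H}^0(A_{\bar{\p}})) \;=\; \mrm{H}^0(A)_{\bar{\p}}/\bar{\p}\cdot\mrm{H}^0(A)_{\bar{\p}},
\]
which is precisely $\kappa(\mrm{H}^0(A),\bar{\p})$, as required. There is no real obstacle here; the only care needed is the bookkeeping between DG-rings over $A$ and over $A_{\bar{\p}}$, handled by flatness of $\varphi$ together with the already proved local case.
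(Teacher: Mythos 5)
Your proof is correct and follows essentially the same route as the paper: rewrite $\kappa(A,\bar{\p})\otimes^{\mrm{L}}_A \mrm{H}^0(A)$ as $\kappa(A_{\bar{\p}})\otimes^{\mrm{L}}_{A_{\bar{\p}}}(A_{\bar{\p}}\otimes^{\mrm{L}}_A \mrm{H}^0(A)) \cong \kappa(A_{\bar{\p}})\otimes^{\mrm{L}}_{A_{\bar{\p}}}\mrm{H}^0(A_{\bar{\p}})$ and then invoke Proposition \ref{prop:redKA} for the sequence-regular local DG-ring $A_{\bar{\p}}$. The only difference is that you spell out the flatness of the localization map and the identification $A_{\bar{\p}}\otimes^{\mrm{L}}_A \mrm{H}^0(A)\cong \mrm{H}^0(A)_{\bar{\p}}$, which the paper leaves implicit.
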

\begin{proof}
There are isomorphisms
\[
\kappa(A,\bar{\p})\otimes^{\mrm{L}}_A \mrm{H}^0(A) \cong \kappa(A,\bar{\p})\otimes^{\mrm{L}}_{A_{\bar{\p}}} A_{\bar{\p}} \otimes^{\mrm{L}}_A \mrm{H}^0(A) \cong \kappa(A,\bar{\p})\otimes^{\mrm{L}}_{A_{\bar{\p}}} \mrm{H}^0(A_{\bar{\p}}).
\]
Since $A_{\bar{\p}}$ is a sequence-regular noetherian local DG-ring,
the result follows from Proposition \ref{prop:redKA}.
\end{proof}

Since $\kappa(A,\bar{\p})$ usually does not have finitely generated cohomology,
it is of course not always compact, 
but nevertheless, it does have finite flat dimension,
a fact that follows from Proposition \ref{prop:redP}.
More explicitly, its flat dimension is given in the next result.
The proof of it is almost the same as the proof of Proposition \ref{prop:flatDim}, so we omit it.
\begin{prop}
Let $A$ be a commutative noetherian sequence-regular DG-ring,
and let $\bar{\p} \in \opn{Spec}(\mrm{H}^0(A))$.
Then 
\[
\opn{flat\dim}_A\left(\kappa(A,\bar{\p})\right) = \dim(\mrm{H}^0(A)_{\bar{\p}}) = \opn{ht}(\bar{\p}).
\]
\end{prop}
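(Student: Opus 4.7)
The plan is to mimic the proof of Proposition \ref{prop:flatDim} verbatim, replacing $\kappa(A)$ by $\kappa(A,\bar{\p})$ and using Proposition \ref{prop:redP} as the key input in place of the base change calculation that appeared in the local case. The main point is that \cite[Theorem 4.1]{ShHomDim} lets us compute the flat dimension of any DG-module over $A$ by first derived-tensoring with $\mrm{H}^0(A)$ and then computing the flat dimension of the resulting complex over the ordinary ring $\mrm{H}^0(A)$. Once we have done this reduction, the problem becomes the purely classical one of computing the flat dimension of $\kappa(\bar{\p})$ over a regular ring.

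More precisely, I would proceed in three short steps. First, invoke \cite[Theorem 4.1]{ShHomDim} to obtain
\[
\opn{flat\,dim}_A\bigl(\kappa(A,\bar{\p})\bigr) = \opn{flat\,dim}_{\mrm{H}^0(A)}\bigl(\kappa(A,\bar{\p})\otimes^{\mrm{L}}_A \mrm{H}^0(A)\bigr).
\]
Second, apply Proposition \ref{prop:redP} to identify the right-hand side with $\opn{flat\,dim}_{\mrm{H}^0(A)}\bigl(\mrm{H}^0(A)_{\bar{\p}}/\bar{\p}\cdot\mrm{H}^0(A)_{\bar{\p}}\bigr)$, i.e.\ with the flat dimension of the ordinary residue field $\kappa(\bar{\p})$ of $\mrm{H}^0(A)$ at $\bar{\p}$. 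Third, since $A$ is sequence-regular, Corollary \ref{cor:main-global} tells us that $\mrm{H}^0(A)$ is a regular ring, so $\mrm{H}^0(A)_{\bar{\p}}$ is a regular local ring of Krull dimension equal to $\opn{ht}(\bar{\p})$, and the classical fact (for regular local rings, the residue field has projective/flat dimension equal to the Krull dimension) gives $\opn{flat\,dim}_{\mrm{H}^0(A)_{\bar{\p}}}(\kappa(\bar{\p})) = \opn{ht}(\bar{\p})$.

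The only minor subtlety is the final identification $\opn{flat\,dim}_{\mrm{H}^0(A)}(\kappa(\bar{\p})) = \opn{flat\,dim}_{\mrm{H}^0(A)_{\bar{\p}}}(\kappa(\bar{\p}))$, which follows because $\mrm{H}^0(A)\to \mrm{H}^0(A)_{\bar{\p}}$ is flat (so a flat resolution of $\kappa(\bar{\p})$ over $\mrm{H}^0(A)_{\bar{\p}}$ is automatically a flat resolution over $\mrm{H}^0(A)$, giving $\le$), and conversely any flat resolution over $\mrm{H}^0(A)$ becomes a flat resolution over $\mrm{H}^0(A)_{\bar{\p}}$ after localizing at $\bar{\p}$, which leaves $\kappa(\bar{\p})$ unchanged. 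I do not expect any real obstacle here, since the entire argument just chains together a DG-theoretic reduction (Theorem 4.1 of \cite{ShHomDim} and Proposition \ref{prop:redP}) with a classical regular-local-ring computation, which is exactly why the author omitted the proof.
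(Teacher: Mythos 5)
Your proposal is correct and follows exactly the route the paper intends: the paper omits this proof, stating only that it is ``almost the same as the proof of Proposition \ref{prop:flatDim}'', and your argument is precisely that adaptation --- the reduction via \cite[Theorem 4.1]{ShHomDim}, the identification of the fiber via Proposition \ref{prop:redP}, and the classical computation over the regular local ring $\mrm{H}^0(A)_{\bar{\p}}$. The extra care you take with the identification $\opn{flat\,dim}_{\mrm{H}^0(A)}(\kappa(\bar{\p})) = \opn{flat\,dim}_{\mrm{H}^0(A)_{\bar{\p}}}(\kappa(\bar{\p}))$ is a correct and welcome detail.
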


Given a sequence-regular DG-ring $A$,
and a DG-module $M \in \cat{D}(A)$,
following \cite[Section 2]{FoxBound} and \cite[Section 3]{SSW},
one may now define the small support of $M$ to be the set
\begin{equation}\label{eqn:supp}
\opn{supp}_A(M) = \{\bar{\p}\in \opn{Spec}(\mrm{H}^0(A))\mid M\otimes^{\mrm{L}}_A \kappa(A,\bar{\p}) \ncong 0\}.
\end{equation}
We further remark that it follows from \cite[Theorem 4.5]{ShWi} that this coincides with the set of primes $\bar{\p}$,
such that $M\otimes^{\mrm{L}}_A \mrm{H}^0(A_{\bar{\p}})/\bar{\p}\cdot \mrm{H}^0(A_{\bar{\p}}) \ncong 0$.
However, (\ref{eqn:supp}) has the advantage that $\kappa(A,\bar{\p})$ has finite flat dimension
(while $\mrm{H}^0(A_{\bar{\p}})/\bar{\p}\cdot \mrm{H}^0(A_{\bar{\p}})$ \textbf{always} has infinite flat dimension over $A$),
making computations of these derived tensor products much easier.

We now return to the theme of Theorem \ref{thm:transfer-flat}.
Recall that if $\varphi:(A,\m)\to (B,\n)$ is a local homomorphism between noetherian local rings,
then the fiber ring of $\varphi$ is defined to be $\opn{Fib}(\varphi) = A/\m\otimes_A B$.
Following \cite{AvFox}, the homotopy fiber of $\varphi$ is defined to be the DG-ring $\opn{HFib}(\varphi) = A/\m\otimes^{\mrm{L}}_A B$.
Suppose now more generally that $(A,\bar{\m})$ and $(B,\bar{\n})$ are commutative noetherian local DG-rings,
and suppose $\varphi:A\to B$ is a map of DG-rings. 
Recall that $\varphi$ is called local if $\mrm{H}^0(\varphi)$ is local.
Now, assume further that $A$ is sequence-regular.
Then by the results of this section, we may define the homotopy fiber of $\varphi$ to be
\[
\opn{HFib}(\varphi) := \kappa(A)\otimes^{\mrm{L}}_A B.
\]
The next result is a generalization of \cite[Corollary 5.1]{ShKos},
where we proved the same result under the assumption that $A$ is a regular local ring:

\begin{prop}
Let $\varphi:(A,\bar{\m})\to (B,\bar{\n})$ be a local homomorphism between noetherian local DG-rings,
and assume that $A$ is sequence-regular,
and that $B$ Cohen–Macaulay with constant amplitude.
Then the homotopy fiber $\opn{HFib}(\varphi) = \kappa(A)\otimes^{\mrm{L}}_A B$ is a Cohen–Macaulay DG-ring.
\end{prop}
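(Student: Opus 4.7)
The plan is to reduce the claim to a direct application of the Cohen–Macaulay behavior of Koszul complexes over $B$, by exploiting the fact that $\kappa(A)$ is itself, by definition, a Koszul complex over $A$.

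First, I would choose a regular system of parameters $\bar{a}_1,\dots,\bar{a}_n$ of the sequence-regular DG-ring $A$, so that by definition
\[
\kappa(A) \;\cong\; K(A;\bar{a}_1,\dots,\bar{a}_n)
\]
in the homotopy category of DG-algebras over $A$. Writing $\bar{b}_i := \mrm{H}^0(\varphi)(\bar{a}_i) \in \mrm{H}^0(B)$, the locality of $\varphi$ ensures that each $\bar{b}_i$ lies in $\bar{\n}$.

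Next, I would invoke the base change property of the Koszul complex (as used already in the proof of Proposition \ref{prop:flatDim}, i.e.\ \cite[Proposition 2.9]{ShKos}) applied to the map $\varphi:A\to B$, to obtain a DG-algebra isomorphism
\[
\opn{HFib}(\varphi) \;=\; \kappa(A)\otimes^{\mrm{L}}_A B \;\cong\; K(A;\bar{a}_1,\dots,\bar{a}_n)\otimes^{\mrm{L}}_A B \;\cong\; K(B;\bar{b}_1,\dots,\bar{b}_n).
\]
Thus the homotopy fiber has been realized concretely as a Koszul complex on a finite sequence of elements of $\bar{\n}$ over $B$.

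At this point the hypothesis on $B$ closes the argument: since $B$ is Cohen–Macaulay with constant amplitude, \cite[Theorem 4.2]{ShKos} tells us that the Koszul complex $K(B;\bar{b}_1,\dots,\bar{b}_n)$ is again a Cohen–Macaulay DG-ring. Combining this with the previous isomorphism yields the claim. There is no real obstacle here: the main content is precisely that, in the sequence-regular setting, the residue DG-field has been built so as to be a Koszul complex, which is exactly what is needed to transport the Cohen–Macaulay property from $B$ to $\opn{HFib}(\varphi)$ via base change.
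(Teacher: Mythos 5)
Your proposal is correct and follows essentially the same route as the paper's own proof: realize $\kappa(A)$ as the Koszul complex on a regular system of parameters, use the base change property of Koszul complexes to identify $\kappa(A)\otimes^{\mrm{L}}_A B$ with $K(B;\bar{b}_1,\dots,\bar{b}_n)$, and then apply \cite[Theorem 4.2]{ShKos} to the Cohen--Macaulay DG-ring $B$ with constant amplitude. No gaps.
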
 
\begin{proof}
Let $\bar{a}_1,\dots,\bar{a}_n$ be a regular system of parameters of $A$,
and for each $1\le i \le n$, let $\bar{b}_i = \mrm{H}^0(f)(\bar{a}_i)$.
Then by the definition of $\kappa(A)$ and the base change property of the Koszul complex, 
we have that
\[
\kappa(A)\otimes^{\mrm{L}}_A B = K(A;\bar{a}_1,\dots,\bar{a}_n) \otimes^{\mrm{L}}_A B \cong K(B;\bar{b}_1,\dots,\bar{b}_n).
\]
By \cite[Theorem 4.2]{ShKos}, 
the latter is a Cohen–Macaulay DG-ring,
which proves the result.
\end{proof}

We finish this section with a short discussion about the structure of the derived category of residue DG-fields.
Let $\K$ be a commutative noetherian DG-ring with bounded cohomology such that $\mrm{H}^0(K)$ is a field.
These are precisely the DG-rings that arise as residue DG-fields of sequence-regular local DG-rings.
Note that by definition, any such $\K$ is itself sequence-regular, and its residue DG-field coincide with it.
When $\K = \mrm{H}^0(\K)$ is a field, the derived category $\cat{D}(\K)$ is of course very simple.
It is an abelian category, and every object of $\cat{D}^{\mrm{b}}_{\mrm{f}}(\K)$ is compact.
Furthermore, the only non-zero localizing (respectively colocalizing) subcategory of $\cat{D}(\K)$ is $\cat{D}(\K)$ itself.
Here, a localizing (resp. colocalizing) subcategory is a thick full triangulated subcategory which is closed under arbitrary coproducts (resp. products).

Now suppose again that $\K$ be a commutative noetherian DG-ring with bounded cohomology such that $\mrm{H}^0(K)$ is a field,
but now assume that $\K \ne \mrm{H}^0(\K)$, so that $\K$ is a proper DG-ring not equivalent to a field.
In that case (under the mild assumption that $\K$ has a noetherian model), 
we know by \cite[Theorem 0.2]{Jo} that the set of compact objects of $\cat{D}(\K)$ is \textbf{strictly contained} 
in $\cat{D}^{\mrm{b}}_{\mrm{f}}(\K)$. 
Indeed, any $M \in \cat{D}^{\mrm{b}}_{\mrm{f}}(\K)$ such that $\amp(M) < \amp(K)$ is not compact,
and since by assumption $\amp(K)>0$, there are always such objects $M$.
This failure reflects the fact that the finite structure of $\cat{D}(\K)$ is different then the finite structure of $\cat{D}(\mrm{H}^0(\K))$. 
By that we mean that compact objects of $\cat{D}(\K)$ (resp. $\cat{D}(\mrm{H}^0(\K))$) are precisely those DG-modules that can be finitely built from $\K$ (resp. $\mrm{H}^0(\K)$), and these are in general different.
On the other hand, the infinite structure of these categories is the same, in the following sense.

\begin{prop}
Let $\K$ be a commutative DG-ring with bounded cohomology such that $\mrm{H}^0(\K)$ is a field.
Then the only non-zero localizing (resp. colocalizing) subcategory of $\cat{D}(\K)$ is equal to $\cat{D}(\K)$.
\end{prop}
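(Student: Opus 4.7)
The plan is to show that any non-zero (co)localizing subcategory $\mathcal{L}\subseteq\cat{D}(\K)$ contains $\K$; since $\K$ is a compact generator of $\cat{D}(\K)$, this yields $\mathcal{L}=\cat{D}(\K)$ in the localizing case, and in the colocalizing case the analogous conclusion follows from the standard fact---a consequence of the contravariant form of Brown representability for compactly generated triangulated categories---that $\K$ also generates $\cat{D}(\K)$ as a colocalizing subcategory. The crucial preliminary observation is that $\K\in\opn{Thick}_{\cat{D}(\K)}(\mrm{H}^0(\K))$: because $\K$ is noetherian with bounded cohomology and $\mrm{H}^0(\K)$ is a field, each $\mrm{H}^i(\K)$ is a finite-dimensional vector space over $\mrm{H}^0(\K)$ and hence a finite direct sum of copies of $\mrm{H}^0(\K)$ viewed as a DG-module over $\K$; finitely many truncation triangles $\tau^{\le i-1}\K\to\tau^{\le i}\K\to\mrm{H}^i(\K)[-i]$ then assemble $\K$ from these pieces. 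Consequently it suffices to prove $\mrm{H}^0(\K)\in\mathcal{L}$.

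Fix a non-zero $M\in\mathcal{L}$. For the localizing case I would apply the standard trick that $\mathcal{L}$ is automatically a $\otimes^{\mrm{L}}$-ideal: the subcategory $\{C\in\cat{D}(\K):M\otimes^{\mrm{L}}_{\K}C\in\mathcal{L}\}$ is localizing and contains $\K$, so it equals $\cat{D}(\K)$, placing $M\otimes^{\mrm{L}}_{\K}\mrm{H}^0(\K)$ in $\mathcal{L}$. Dually, in the colocalizing case the subcategory $\{C\in\cat{D}(\K):\mrm{R}\opn{Hom}_\K(C,M)\in\mathcal{L}\}$ is localizing---closure under coproducts uses that $\mrm{R}\opn{Hom}_\K(-,M)$ converts coproducts to products and that $\mathcal{L}$ is product-closed---and contains $\K$, so $\mrm{R}\opn{Hom}_\K(\mrm{H}^0(\K),M)\in\mathcal{L}$. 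Both constructed objects are non-zero: if for instance $M\otimes^{\mrm{L}}_{\K}\mrm{H}^0(\K)=0$, then the thick, coproduct-closed subcategory $\{C:M\otimes^{\mrm{L}}_{\K}C=0\}$ would contain $\mrm{H}^0(\K)$ and therefore, by the preliminary observation, also $\K$, forcing $M\cong M\otimes^{\mrm{L}}_{\K}\K=0$; the vanishing of $\mrm{R}\opn{Hom}_\K(\mrm{H}^0(\K),M)$ is ruled out symmetrically.

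Since $M\otimes^{\mrm{L}}_{\K}\mrm{H}^0(\K)$ and $\mrm{R}\opn{Hom}_\K(\mrm{H}^0(\K),M)$ each carry natural $\mrm{H}^0(\K)$-actions, they live in $\cat{D}(\mrm{H}^0(\K))$, and---the latter being the derived category of a field---each decomposes there as a direct sum of shifts of $\mrm{H}^0(\K)$; this decomposition survives restriction of scalars to $\cat{D}(\K)$, so by thickness and shift-closure of $\mathcal{L}$ some, and hence every, shift of $\mrm{H}^0(\K)$ lies in $\mathcal{L}$. Combined with the preliminary observation this delivers $\K\in\mathcal{L}$. The main obstacle in this plan is the closing step in the colocalizing case, namely the identification $\opn{Coloc}(\K)=\cat{D}(\K)$, which is standard for compactly generated triangulated categories but is the one place I would need to invoke external theory rather than the internal apparatus of this paper.
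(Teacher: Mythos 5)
Your localizing half is correct and self-contained, and it takes a genuinely different route from the paper: the paper disposes of both cases in one line by citing the classification of (co)localizing subcategories of $\cat{D}(\K)$ by subsets of $\opn{Spec}(\mrm{H}^0(\K))$ from \cite[Theorem 4.15]{ShWi}, whereas you give an elementary internal argument. Your reduction steps all work: $\{C : M\otimes^{\mrm{L}}_{\K}C\in\mathcal{L}\}$ is localizing and contains the compact generator $\K$, the non-vanishing of $M\otimes^{\mrm{L}}_{\K}\mrm{H}^0(\K)$ follows from $\K\in\opn{Thick}(\mrm{H}^0(\K))$, the splitting of $M\otimes^{\mrm{L}}_{\K}\mrm{H}^0(\K)$ into shifts of $\mrm{H}^0(\K)$ in $\cat{D}(\K)$ is legitimate, and thickness of $\mathcal{L}$ then yields $\mrm{H}^0(\K)\in\mathcal{L}$ and hence $\K\in\mathcal{L}$, so $\mathcal{L}\supseteq\opn{Loc}(\K)=\cat{D}(\K)$. (You quietly assume $\K$ noetherian, which is the hypothesis in force in the surrounding discussion; in any case it can be avoided by replacing $\opn{Thick}(\mrm{H}^0(\K))$ with $\opn{Loc}(\mrm{H}^0(\K))$.)

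The colocalizing half has a genuine gap exactly where you flag it. Having shown $\K\in\mathcal{L}$, you need $\opn{Coloc}(\K)=\cat{D}(\K)$, and this is \emph{not} a formal consequence of the contravariant form of Brown representability: that machinery gives cogeneration by the Brown--Comenetz dual of a compact generator, not by the compact generator itself, and whether a (DG-)ring cogenerates its own derived category as a colocalizing subcategory is precisely the nontrivial ``costratification'' input -- it is the hard content of Neeman's theorem for $\cat{D}(R)$ and of \cite[Theorem 4.15]{ShWi} here. So as written you have reduced the colocalizing statement to something essentially as deep as the result you are trying to prove. The gap is fillable with the tools you already set up: since $\mrm{H}^0(\K)\in\mathcal{L}$, it suffices to show $\opn{Coloc}(\mrm{H}^0(\K))=\cat{D}(\K)$. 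Fix $X\in\cat{D}(\K)$ and consider the subcategory $\{C : \mrm{R}\opn{Hom}_{\K}(C,X)\in\opn{Coloc}(\mrm{H}^0(\K))\}$; it is localizing (coproducts go to products), and it contains $\mrm{H}^0(\K)$ because $\mrm{R}\opn{Hom}_{\K}(\mrm{H}^0(\K),X)$ carries an $\mrm{H}^0(\K)$-module structure, hence is a direct sum of shifts of $\mrm{H}^0(\K)$, and over a field such a direct sum is a retract of the corresponding product of shifts (every subspace of a vector space splits off, and restriction of scalars preserves products). Since $\K\in\opn{Loc}(\mrm{H}^0(\K))$ by your truncation argument, you conclude $X=\mrm{R}\opn{Hom}_{\K}(\K,X)\in\opn{Coloc}(\mrm{H}^0(\K))\subseteq\mathcal{L}$. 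With that patch the proof is complete and independent of \cite{ShWi}.
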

\begin{proof}
By \cite[Theorems 4.15, 4.17]{ShWi}, there is a bijection between localizing (resp. colocalizing) subcategories of $\cat{D}(\K)$ and subsets of $\opn{Spec}(\mrm{H}^0(K))$ given by sending every such category to the union of its supports (resp. cosupports). 
Since $\opn{Spec}(\mrm{H}^0(K))$ is a singleton, the result follows.
\end{proof}

\begin{rem}
In the paper \cite{Mathew},
Mathew develops a theory of residue fields over certain rational noetherian $E_{\infty}$-rings.
However, the setting in which he works is quite different then ours,
and his construction of residue fields is also completely different.
The main advantage of the residue DG-fields introduced here is,
because of the regularity assumption, they are compact as objects over $A$,
as in the classical situation of regular local rings.
\end{rem}

\section{Generic sequence-regularity}\label{sec:generic}

In this section we show that any eventually coconnective derived algebraic variety over a perfect field is generically sequence-regular.
Given a commutative noetherian ring $A$, 
recall that the regular locus of $A$ is defined as 
\[
\opn{Reg}(A) = \{\p \in \opn{Spec}(A)\mid A_{\p} \mbox{ is a regular local ring}\}.
\]
Recall that a commutative noetherian ring $A$ is called a J-0 ring if $\opn{Reg}(A)$ contains a non-empty open set,
and is called a J-1 ring if $\opn{Reg}(A)$ is an open set.
If $A$ is an integral domain which is J-1, then it is also J-0.

For a commutative noetherian DG-ring $A$, we similarly define its sequence-regular locus by
\[
\opn{seq-Reg}(A) = \{\bar{\p} \in \opn{Spec}(A)\mid A_{\bar{\p}} \mbox{ is a sequence-regular DG-ring}\}.
\]
When a DG-ring $A$ is an ordinary commutative noetherian ring,
we know that there is an equality $\opn{seq-Reg}(A) = \opn{Reg}(A)$.
The next result shows that under mild hypothesis, the set $\opn{seq-Reg}(A)$ is a dense open subset of $\opn{Spec}(\mrm{H}^0(A))$.

\begin{thm}\label{thm:generic}
Let $A$ be a commutative noetherian DG-ring with bounded cohomology,
and suppose that $A$ has a dualizing DG-module,
and that moreover the ring $\mrm{H}^0(A)$ is an a J-1 ring which is an integral domain.
Then the set $\opn{seq-Reg}(A)$ is a dense open subset of $\opn{Spec}(\mrm{H}^0(A))$.
\end{thm}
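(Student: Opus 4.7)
The plan is to identify $\opn{seq-Reg}(A)$ explicitly with an intersection of three concrete open sets, and then use the generic point of the irreducible spectrum $\opn{Spec}(\mrm{H}^0(A))$ to obtain density. First I would invoke Theorem \ref{thm:main}, which rewrites $\bar{\p}\in\opn{seq-Reg}(A)$ as the conjunction of $A_\bar{\p}$ being Cohen--Macaulay and $\mrm{H}^0(A_\bar{\p})$ being regular local. Since $\mrm{H}^0(A)$ is a domain, it has a unique minimal prime $\bar{\p}_0=(0)$, and $\mrm{H}^0(A_{\bar{\p}_0})=\opn{Frac}(\mrm{H}^0(A))$ is a field; its zero maximal ideal is generated by the empty (vacuously regular) $A_{\bar{\p}_0}$-sequence, so $\bar{\p}_0\in\opn{seq-Reg}(A)$. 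In particular $A_{\bar{\p}_0}$ is local-Cohen--Macaulay, and if $R$ denotes the given dualizing DG-module (so that $R_\bar{\p}$ is dualizing over $A_\bar{\p}$ for every $\bar{\p}$), setting $k_0:=\amp(A_{\bar{\p}_0})$ we also get $\amp(R_{\bar{\p}_0})=k_0$.

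The next step is to observe that both $\bar{\p}\mapsto\amp(A_\bar{\p})$ and $\bar{\p}\mapsto\amp(R_\bar{\p})$ are upper semi-continuous functions: because $A$ and $R$ have finitely generated bounded cohomology, each superlevel set $\{\bar{\p}:\amp(M_\bar{\p})>n\}$ is a finite union of pairwise intersections $\opn{Supp}(\mrm{H}^i(M))\cap\opn{Supp}(\mrm{H}^j(M))$ with $|i-j|>n$, and is therefore closed. Combining this with the J-1 hypothesis, the set
\[
\tilde{W}:=\opn{Reg}(\mrm{H}^0(A))\cap\{\bar{\p}:\amp(A_\bar{\p})\le k_0\}\cap\{\bar{\p}:\amp(R_\bar{\p})\le k_0\}
\]
is open, and it contains $\bar{\p}_0$ by construction; since $\bar{\p}_0$ is dense in the irreducible space $\opn{Spec}(\mrm{H}^0(A))$, the set $\tilde{W}$ is dense. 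The theorem then reduces to proving the equality $\opn{seq-Reg}(A)=\tilde{W}$.

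For the inclusion $\tilde{W}\subseteq\opn{seq-Reg}(A)$, upper semi-continuity applied at $\bar{\p}_0$ gives $\amp(A_\bar{\p}),\amp(R_\bar{\p})\ge k_0$ for every $\bar{\p}$, so on $\tilde{W}$ both amplitudes equal $k_0$ and therefore $A_\bar{\p}$ is local-Cohen--Macaulay. Since $\mrm{H}^0(A_\bar{\p})$ is a regular local ring --- hence a catenary domain with a unique minimal prime --- Proposition \ref{prop:const} yields constant amplitude for $A_\bar{\p}$, and Theorem \ref{thm:lCMisCM} then upgrades the local-Cohen--Macaulay property to the Cohen--Macaulay property, whence $A_\bar{\p}$ is sequence-regular by Theorem \ref{thm:main}.

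The reverse inclusion $\opn{seq-Reg}(A)\subseteq\tilde{W}$ is the step I expect to be the main subtlety, because one must propagate sequence-regularity at a single point to a global amplitude statement. Given $\bar{\p}\in\opn{seq-Reg}(A)$, Theorem \ref{thm:main} supplies $\bar{\p}\in\opn{Reg}(\mrm{H}^0(A))$ and $A_\bar{\p}$ Cohen--Macaulay. Proposition \ref{prop:const} applied to $A_\bar{\p}$ (whose $\mrm{H}^0$ is a domain) yields constant amplitude, so $\amp(A_\bar{\p})=\amp(A_\bar{\q})$ for every $\bar{\q}\subseteq\bar{\p}$; specializing to $\bar{\q}=\bar{\p}_0$ gives $\amp(A_\bar{\p})=k_0$. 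Moreover, since $A_\bar{\q}$ is local-Cohen--Macaulay for every such $\bar{\q}$, the dualizing characterization forces $\amp(R_\bar{\q})=\amp(A_\bar{\q})=k_0$, and in particular $\amp(R_\bar{\p})=k_0$, so $\bar{\p}\in\tilde{W}$, completing the proof.
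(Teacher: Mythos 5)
Your proposal is correct, but it takes a genuinely different (and more self-contained) route than the paper. The paper's proof is essentially three lines: it cites the proof of \cite[Theorem 12]{ShOpLoc} to assert that the Cohen--Macaulay locus $\opn{CM}(A)=\{\bar{\p}\mid A_{\bar{\p}}\text{ is Cohen--Macaulay}\}$ is a dense open subset of the irreducible space $\opn{Spec}(\mrm{H}^0(A))$ whenever $A$ has a dualizing DG-module, observes that $\opn{Reg}(\mrm{H}^0(A))$ is dense open by the J-1 and domain hypotheses, and concludes via the identity $\opn{seq-Reg}(A)=\opn{Reg}(\mrm{H}^0(A))\cap\opn{CM}(A)$ furnished by Theorem \ref{thm:main}. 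You instead re-derive the needed openness and density directly: you replace the black-boxed $\opn{CM}(A)$ by the explicit amplitude conditions $\amp(A_{\bar{\p}})\le k_0$ and $\amp(R_{\bar{\p}})\le k_0$, prove their openness by the (correct) observation that superlevel sets of $\bar{\p}\mapsto\amp(M_{\bar{\p}})$ are finite unions of $\opn{Supp}(\mrm{H}^i(M))\cap\opn{Supp}(\mrm{H}^j(M))$ with $|i-j|>n$, and then use the dualizing-module criterion for local-Cohen--Macaulayness together with Proposition \ref{prop:const} and Theorem \ref{thm:lCMisCM} to upgrade to the full Cohen--Macaulay property on $\tilde{W}$. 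The two inclusions $\tilde{W}=\opn{seq-Reg}(A)$ are argued soundly (the reverse inclusion correctly exploits constant amplitude of $A_{\bar{\p}}$ to pin both amplitudes at the generic value $k_0$). What your approach buys is independence from \cite{ShOpLoc} at the cost of length; what the paper's approach buys is brevity, and slightly more generality in spirit, since the citation handles the openness of the whole Cohen--Macaulay locus rather than only its intersection with the regular locus. Two minor points you leave implicit but which are fine: that a dualizing DG-module has bounded (not merely bounded-below) finitely generated cohomology, so that the semicontinuity argument applies to $R$, and that $R_{\bar{\p}}$ is dualizing over $A_{\bar{\p}}$.
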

\begin{proof}
By assumption, the set $\opn{Reg}(\mrm{H}^0(A))$ is a dense open subset of $\opn{Spec}(\mrm{H}^0(A))$.
On the other hand, by the proof of \cite[Theorem 12]{ShOpLoc},
the fact that $\mrm{H}^0(A)$ has an irreducible spectrum,
and that $A$ has a dualizing DG-module implies that the set
\[
\opn{CM}(A) = \{\bar{\p} \in \opn{Spec}(\mrm{H}^0(A))\mid A_{\bar{\p}} \mbox{ is a Cohen–Macaulay DG-ring}\}
\]
is also a dense open subset of $\opn{Spec}(\mrm{H}^0(A))$.
According to Theorem \ref{thm:main}, 
there is an equality
\[
\opn{seq-Reg}(A) = \opn{Reg}(\mrm{H}^0(A)) \cap \opn{CM}(A),
\]
and as both of these sets are dense open subsets, we deduce that $\opn{seq-Reg}(A)$ is also a dense open subset.
\end{proof}

In classical algebraic geometry, any algebraic variety over a perfect field is generically regular.
Similarly, we obtain the following version of this fact in derived algebraic geometry:

\begin{cor}\label{cor:generic}
Let $\K$ be a perfect field,
let $A$ be a commutative noetherian DG-algebra over $\K$ with bounded cohomology,
and suppose that the induced map $\K \to \mrm{H}^0(A)$ is of finite type,
and that $\mrm{H}^0(A)$ is an integral domain.
Then $\opn{seq-Reg}(A)$ is a dense open subset of $\opn{Spec}(\mrm{H}^0(A))$.
\end{cor}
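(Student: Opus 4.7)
The plan is to deduce this from Theorem \ref{thm:generic} by verifying its three hypotheses for the DG-algebra $A$: namely, that $\mrm{H}^0(A)$ is an integral domain, that $\mrm{H}^0(A)$ is a J-1 ring, and that $A$ admits a dualizing DG-module. The first is given by assumption, so the work lies in the other two.

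For the J-1 condition: since $\K$ is perfect and $\mrm{H}^0(A)$ is finite type over $\K$, $\mrm{H}^0(A)$ is an excellent ring by the classical theorem in commutative algebra (see e.g. Matsumura). In particular, it is a J-2 ring, which implies that it is also J-1. Since it is also assumed to be an integral domain, its regular locus is in fact a dense open subset.

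For the existence of a dualizing DG-module: the ring $\mrm{H}^0(A)$, being finite type over the field $\K$, admits a dualizing complex by Grothendieck's classical theorem. To lift this to a dualizing DG-module over $A$, one invokes a transfer result for dualizing DG-modules in the spirit of \cite[Theorem 7.14]{ShInj} or the analogous DG-version of Lurie's lifting theorem cited in Remark \ref{rem:dual-complex} — for an eventually coconnective noetherian DG-ring with bounded cohomology over a field whose $\mrm{H}^0$ has a dualizing complex, the DG-ring itself admits a dualizing DG-module. Alternatively, one can choose a finitely generated DG-subalgebra of $A$ over $\K$ that is quasi-isomorphic to $A$, and construct the dualizing DG-module directly via a surjection from a smooth (polynomial) DG-algebra over $\K$.

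Having verified the three hypotheses, Theorem \ref{thm:generic} applies directly, and yields that $\opn{seq-Reg}(A)$ is a dense open subset of $\opn{Spec}(\mrm{H}^0(A))$, completing the proof. The main obstacle here is the dualizing DG-module: the J-1 property is essentially immediate from the perfect/finite type setup, while the existence of dualizing DG-modules for non-trivial DG-rings is more subtle, and depends on invoking (or establishing within this paper's framework) a transfer principle from $\mrm{H}^0(A)$ to $A$.
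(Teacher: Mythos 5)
Your proposal follows the paper's proof exactly: verify the hypotheses of Theorem \ref{thm:generic} and apply it, with the J-1 property coming from excellence of finite-type algebras over a (perfect) field and the only real work being the existence of a dualizing DG-module. One caveat on that last point: your primary suggested route --- transferring a dualizing complex from $\mrm{H}^0(A)$ up to $A$ via a DG-version of Lurie's lifting theorem --- is exactly what Remark \ref{rem:dual-complex} flags as a \emph{conjecture}, so it is not available as a proof step; the argument that actually works is your stated alternative, namely using the finite-type structure of $A$ over $\K$ directly, which is precisely what the paper invokes via \cite[Theorem 7.9]{YeDual}.
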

\begin{proof}
Under these assumptions, $\opn{Spec}(\mrm{H}^0(A))$ is simply a classical algebraic variety over $\K$.
Moreover, the assumption that $\K \to \mrm{H}^0(A)$ is of finite type,
implies by \cite[Theorem 7.9]{YeDual} that $A$ has a dualizing DG-module.
Hence, the assumptions of Theorem \ref{thm:generic} are satisfied.
\end{proof}

In geometric terms, one may formulate the above as:
\begin{cor}\label{cor:geometric}
Let $\K$ be a perfect field, and let $X$ be an eventually coconnective noetherian DG-scheme over $\K$.
Suppose that the classical scheme underlying $X$ is an algebraic variety over $\K$.
Then $X$ is generically sequence-regular.
\end{cor}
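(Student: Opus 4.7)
The plan is to reduce this geometric corollary directly to Corollary \ref{cor:generic}. Since sequence-regularity is stable under localization by Corollary \ref{cor:Serre}, the sequence-regular locus $\opn{seq-Reg}(X) \subseteq X$ is well-defined, and both its openness and density can be verified on any affine open cover. Thus the reduction is to exhibit an affine open cover $\{U_i = \opn{Spec}(A_i)\}$ of $X$ such that $\opn{seq-Reg}(A_i)$ is a dense open subset of $\opn{Spec}(\mrm{H}^0(A_i))$ for each $i$.

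The next step is to choose the cover so that each $\mrm{H}^0(A_i)$ is an integral domain, which is precisely the shape needed to apply Corollary \ref{cor:generic}. This is possible because the underlying classical scheme of $X$ is a noetherian algebraic variety over $\K$, hence has finitely many irreducible components; taking affine opens contained in the complement of the union of all components other than a chosen one produces affines whose $\mrm{H}^0$ is a finitely generated $\K$-domain. On such an affine patch, $A_i$ is a commutative noetherian DG-algebra over $\K$ with bounded cohomology (automatic from the assumption that $X$ is an eventually coconnective noetherian DG-scheme over $\K$) and $\mrm{H}^0(A_i)$ is a finite type $\K$-domain, so Corollary \ref{cor:generic} applies and gives that $\opn{seq-Reg}(A_i)$ is dense open in $\opn{Spec}(\mrm{H}^0(A_i))$.

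Finally, since each $\opn{seq-Reg}(U_i)$ is dense open in $U_i$ and the chosen patches together meet every irreducible component of the underlying classical variety in a dense open subset, their union is an open subset of $X$ which is dense in each irreducible component, hence dense in $X$. I do not expect any serious technical obstacle here: the substantive content is already packaged into Corollary \ref{cor:generic}, which itself rests on Theorem \ref{thm:main} together with the openness of the Cohen–Macaulay locus used in Theorem \ref{thm:generic}. The only gentle subtlety is the bookkeeping of irreducible components, which is handled by the standard fact that in a noetherian scheme with finitely many irreducible components, a subset is dense precisely when it is dense on each component.
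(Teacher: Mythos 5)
Your proof is correct and takes essentially the same approach as the paper: the paper presents Corollary \ref{cor:geometric} as a direct geometric reformulation of Corollary \ref{cor:generic} with no further argument, and your reduction to that corollary via an affine cover adapted to the irreducible components is exactly the routine gluing the paper leaves implicit. The only remark is that if ``algebraic variety'' is read as irreducible (as the proof of Corollary \ref{cor:generic} suggests), then $\mrm{H}^0$ of every affine open is already a domain of finite type over $\K$, so the component bookkeeping, while harmless and correct, is not needed.
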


\section{Special commutative DG-rings}\label{sec:special}

In classical commutative algebra, 
the four most prominent classes of commutative noetherian rings are the classes of regular rings, local complete intersection (lci) rings,
Gorenstein rings and Cohen–Macaulay rings.
It is well known that there is a chain of strict inclusions:
\begin{equation}\label{eqn:special}
\mbox{Regular rings} \subsetneq \mbox{Lci rings} \subsetneq \mbox{Gorenstein rings} \subsetneq \mbox{Cohen–Macaulay rings}. 
\end{equation}
See for example \cite[Page 171]{Mat}.
In this final section we discuss the corresponding situation in the DG setting.

The earliest generalization of these to the DG-setting was probably the notion of a Gorenstein DG-ring.
This notion was first introduced in topology by F\'{e}lix, Halperin and Thomas \cite{FHT}, in a 
context where the DG-rings are non-negatively graded.
Later, Avramov and Foxby studied Gorenstein homomorphisms in \cite{AvFox},
and this led them to the notion of a Gorenstein local DG-ring.
This notion was later generalized and extensively studied by Frankild, Iyengar, and J{\o}rgensen in \cite{FJ,FIJ}.
We will say that a commutative noetherian DG-ring $A$ with bounded cohomology is Gorenstein if $A_{\bar{\p}}$ has finite injective dimension over itself for all $\bar{\p} \in \opn{Spec}(\mrm{H}^0(A))$.

The next generalization which was introduced was a generalization of the notion of a local complete intersection rings.
These DG-rings are called quasi-smooth DG-rings. 
The first incarnation of this notion we are aware of was in the work \cite{ArGa} of Arinkin and Gaitsgory concerning the geometric Langlands conjecture. There, quasi-smooth DG-rings were studied over a fixed field,
and they were defined to be DG-rings whose cotangent complex has flat dimension at most $1$.

Cohen–Macaulay DG-rings were already discussed extensively above. 
They were introduced by the author in \cite{ShCM}.
It should be noted that unlike the other classes discussed here,
the Cohen–Macaulay property itself is not in general stable under localization,
so we distinguish between local-Cohen–Macaulay DG-rings and Cohen–Macaulay DG-rings,
which are defined to be those DG-rings which all of their localizations are local-Cohen–Macaulay.
It should be further noted, as demonstrated in \cite{ShKos}, 
that the theory of Cohen–Macaulay DG-rings is particularly well behaved, 
and similar to the classical theory, under the additional assumption that they have constant amplitude.

Finally, the generalization of regular rings we consider here is the topic of this paper, namely sequence-regular DG-rings.

We now discuss relations between these classes of DG-rings.
According to \cite[Corollary 2.2.8]{ArGa},
any quasi-smooth DG-ring is Gorenstein,
while by \cite[Proposition 8.9]{ShCM},
any Gorenstein DG-ring is Cohen–Macaulay.
It should be noted however that a Gorenstein DG-ring does not have to be of constant amplitude:
\begin{exa}
Let $A$ be any commutative noetherian local ring which has a dualizing complex,
but which is not equidimensional.
Let $R$ be a dualizing complex over $A$,
and shift it so that $\sup(R) < 0$.
Consider the trivial extension DG-ring $B = A \skewtimes R$,
as defined in \cite[Definition 1.2]{JorDual}.
According to \cite[Theorem 2.2]{JorDual},
the DG-ring $B$ is Gorenstein.
Letting $n = \inf(R)$,
by definition we have that $n = \inf(B)$,
and that $\mrm{H}^n(B) = \mrm{H}^n(R)$.
Moreover, $\mrm{H}^0(B) = A$.
Since $A$ is not equidimensional,
it follows from \cite[Remark 11.4.10]{ScSiBook} that $\opn{Supp}(\mrm{H}^n(R)) \subsetneq \opn{Spec}(A)$,
so that $B$ does not have constant amplitude.
\end{exa}

By Theorem \ref{thm:main} and its proof, 
we know that any sequence-regular DG-ring is Cohen–Macaulay and has constant amplitude.
Unfortunately, it turns out that sequence-regular DG-rings need not be quasi-smooth,
and they do not even need to be Gorenstein:
\begin{exa}
Let $\K$ be a field.
According to \cite[Theorem 5]{ShOpLoc},
there exist a commutative noetherian DG-ring $A$,
such that $A$ has bounded cohomology, 
$\mrm{H}^0(A) = \K$,
and such that $A$ is not Gorenstein.
On the other hand, the fact that $\mrm{H}^0(A)$ is a field implies by definition that $A$ is sequence-regular.
\end{exa}
It should be noted that the above is not an isolated example:
by Theorem \ref{thm:generic}, we know that if $A$ is a commutative noetherian DG-ring with bounded cohomology,
such that $A$ has a dualizing DG-module and $\mrm{H}^0(A)$ is sufficiently nice,
then $A$ is generically sequence-regular. 
On the other hand, by \cite[Theorem 5]{ShOpLoc}, for any commutative noetherian ring $B$ which has a dualizing complex,
there is a commutative noetherian DG-ring $A$ with bounded cohomology which has a dualizing DG-module,
such that $\mrm{H}^0(A) = B$, and such that $A$ is nowhere Gorenstein.

\begin{rem}
As noted above, quasi-smooth DG-rings are a generalization of locally complete intersection rings.
Recently, Pollitz gave in \cite[Theorem 5.2]{Pol} the following beautiful characterization of locally complete intersection rings:
a noetherian local ring $(A,\m)$ is a local complete intersection if and only if every object of $\cat{D}^{\mrm{b}}_{\mrm{f}}(A)$ is virtually small. Here, a non-zero object is called virtually small if the thick subcategory is generates contains a non-zero compact object.
This suggests that an alternative definition of a local complete intersection DG-ring is a noetherian local DG-ring $A$ for which every object of $\cat{D}^{\mrm{b}}_{\mrm{f}}(A)$ is virtually small.
We do not know if this definition leads to a different notion or whether it is equivalent to the notion of a quasi-smooth DG-ring.
In any case, we would like to mention that even if one works with this definition, 
it is still the case that not every sequence-regular DG-ring satisfies this definition.
Indeed, in \cite[Theorem 4.29]{ShWi},
there is an example of a commutative noetherian DG-ring $\K$ with bounded cohomology,
such that $\mrm{H}^0(\K)$ is a field (and hence, $\K$ is sequence-regular),
but there exist a non-zero object in $\cat{D}^{\mrm{b}}_{\mrm{f}}(\K)$ which is not virtually small.
\end{rem}

We summarize the above discussion in the following diagram,
which is a DG version of diagram (\ref{eqn:special}).
All inclusions in this diagram are strict.

\[
\hspace*{-1.5cm}
\begin{tikzcd}[arrows = dash, row sep = 0.5cm,column sep=-0.8em]
\mbox{Sequence-regular DG-rings} \subsetneq \mbox{Cohen–Macaulay DG-rings with constant amplitude} \ar[hook]{rd} &\\
\phantom{\mbox{Sequence-regular DG-rings} \subsetneq \mbox{Cohen–Macaulay DG-rings with constant amplitude}} & \mbox{Cohen–Macaulay DG-rings}\\
\mbox{Quasi-smooth DG-rings}\subsetneq \mbox{Gorenstein DG-rings} \ar[hook]{ru} &
\end{tikzcd}
\]

\textbf{Acknowledgments.}

The author thanks Jordan Williamson for helpful discussions.
The author is thankful to an anonymous referee for suggestions that helped improving this
manuscript.
This work has been supported by the Charles University Research Centre program No.UNCE/SCI/022,
and by the grant GA~\v{C}R 20-02760Y from the Czech Science Foundation.

\bibliographystyle{plain}
\bibliography{references}

\end{document}